\numberwithin{equation}{section}
\newtheorem*{Thmuncount}{Theorem}
\newtheorem{theorem}{Theorem}[section]
\newtheorem*{rem}{Remark}
\newenvironment{unremark}{\begin{rem}\rm}{\end{rem}}
\newtheorem{proposition}[theorem]{Proposition}
\newtheorem{fact}[theorem]{Fact}
\newtheorem{lemma}[theorem]{Lemma}
\newtheorem{corollary}[theorem]{Corollary}
\newtheorem{Definition}[theorem]{Definition}
\newtheorem{Remark}[theorem]{Remark}
\newenvironment{remark}{\begin{Remark}\rm}{\end{Remark}}
\newtheorem{Notation}[theorem]{Notation}
\newtheorem{RHproblem}[theorem]{RH problem}
\newtheorem{Example}[theorem]{Example}
\theoremstyle{definition}
\renewcommand{\P}{\mathbb{P}}
\newcommand{\B}{\mathbb{B}}
\newcommand{\C}{\mathbb{C}}
\newcommand{\D}{\mathbb D}
\newcommand{\R}{\mathbb{R}}
\newcommand{\T}{\mathbb{T}}
\renewcommand{\P}{\mathbb{P}}
\newcommand{\CC}{\mathcal C}
\begin{document}

\title{Small Bergman-Orlicz and Hardy-Orlicz spaces, and their composition operators}

\author{S. Charpentier}
\address{St\'ephane Charpentier, Institut de Math\'ematiques, UMR 7373, Aix-Marseille Universit\'e, 39 rue F. Joliot Curie, 13453 Marseille Cedex 13, FRANCE}
\email{stephane.charpentier.1@univ-amu.fr}

\begin{abstract}
We show that the weighted Bergman-Orlicz space $A_{\alpha}^{\psi}$ coincides with some weighted Banach space of holomorphic functions if and only if the Orlicz function $\psi$ satisfies the so-called $\Delta^{2}$--condition. In addition we prove that this condition characterizes those $A_{\alpha}^{\psi}$ on which every composition operator is bounded or order bounded into the Orlicz space $L_{\alpha}^{\psi}$. This provides us with estimates of the norm and the essential norm of composition operators on such spaces. We also prove that when $\psi$ satisfies the $\Delta^{2}$--condition, a composition operator is compact on $A_{\alpha}^{\psi}$ if and only if it is order bounded into the so-called Morse-Transue space $M_{\alpha}^{\psi}$. Our results stand in the unit ball of $\mathbb{C}^{N}$.
\end{abstract}

\subjclass[2010]{Primary: 47B33 - Secondary: 30H05; 32C22; 46E15}

\keywords{Hardy-Orlicz space - Several complex variables - Bergman-Orlicz space - Weighted Banach space of holomorphic functions - Composition operator - Several complex variables}

\maketitle
\def\leftmark{\MakeUppercase{}}

\def\rightmark{\MakeUppercase{St\'ephane Charpentier}}

\section{Introduction and first definitions}

Hardy-Orlicz and Bergman-Orlicz spaces are natural generalizations of the classical Hardy $H^p$ and Bergman spaces $A^p$. They have been rather well studied when the defining Orlicz function grows slowly, \emph{i.e.} when they are larger than $H^1$ and, roughly speaking, similar to the Nevanlinna class $\mathcal{N}$, which is one instance of such a space. Nevertheless, there are not so many papers dealing with small Hardy-Orlicz and Bergman-Orlicz spaces. One of the interests in looking at these spaces is that, due to the wide range of Orlicz functions, they provide an as well wide and refined scale of natural Banach spaces of holomorphic functions between $A^1$ and $H^{\infty}$. And by many aspects $H^{\infty}$ radically differs from Hardy and Bergman spaces. This can be seen for instance from the point of view of operator theory and more specifically from that of composition operators. We recall here that a composition operator $C_{\phi}$, with symbol $\phi$, is defined as $C_{\phi}(f)=f\circ \phi$, where $f$ lies in some Banach space of holomorphic functions and $\phi$ is an appropriate holomorphic map. If $\B_N=\left\{ z=\left(z_{1},\ldots,z_{N}\right)\in\mathbb{C}^{N},\,\left|z\right|:=\left(\left|z_{1}\right|^2+\ldots+\left|z_{N}\right|^2\right)^{1/2}<1\right\}$ is the unit ball of $\mathbb{C}^{N}$, every composition operator is trivially bounded on the space $H^{\infty}(\B_N)$ of bounded holomorphic functions on $\B_N$ and, when $N=1$, the Littlewood Subordination Principle says that this is also true on every Hardy and Bergman space \cite{shapiro_composition_1993}. Yet things get more complicated whenever $N\geq 2$ since even the simple symbol $\varphi(z)=(N^{N/2}z_1z_2\ldots z_N,0')$, $0'$ being the null $N-1$-tuple, does not induce a bounded composition operator on $H^2(\B_N)$ (see \cite{cowen_composition_1995} for other examples). Besides, whatever the dimension, while the compactness of $C_{\phi}$ on $H^p(\B_N)$, $1\leq p < \infty$, does not depend on $p$ and can occur when $\phi(\B_N)$ touches the boundary of $\B_N$ \cite{cowen_composition_1995,shapiro_compact_1973}, it is compact on $H^{\infty}(\B_N)$ if and only if $|\phi(z)|\leq r <1$ for every $z\in \B_N$ \cite{schwartz_composition_1969}. These observations recently motivated some authors to study composition operators on the whole scale of Hardy-Orlicz and Bergman-Orlicz spaces. This study turned out to be rich, \cite{lefevre_compact_2009,lefevre_composition_2010,lefevre_revisited_2012,lefevre_compact_2013} on the unit disc $\D$ and \cite{charpentier_operateurs_2010,charpentier_composition_2011,charpentier_compact_2013,charpentier_composition_2013} on the unit ball $\B_N$, $N\geq 1$. In the present paper we will mainly focus on \emph{small} Hardy-Orlicz and Bergman-Orlicz spaces, namely those which are \emph{close} to $H^{\infty}(\B_N)$. A surprising characterization of the latter ones will be obtained and used to refine some previously known results. We hope that it will highlight how properties of composition operators are related to the \emph{structure} of these spaces.

Let us give the definitions of Hardy-Orlicz and weighted Bergman-Orlicz spaces. Given an \emph{Orlicz function} $\psi$, \emph{i.e.} a strictly convex function $\psi:[0,+\infty)\rightarrow[0,+\infty)$
which vanishes at $0$, is continuous at $0$ and satisfies ${\displaystyle \frac{\psi\left(x\right)}{x}\xrightarrow[x\rightarrow+\infty]{}+\infty}$,
and given a probability space $\left(\Omega,\mathbb{P}\right)$, the \emph{Orlicz
space} $L^{\psi}\left(\Omega,\mathbb{P}\right)$ associated with $\psi$
on $\left(\Omega,\mathbb{P}\right)$ is defined as the set of all
(equivalence classes of) measurable functions $f$ on $\Omega$ for
which there exists some $C>0$, such that $\int_{\Omega}\psi\left(\frac{\left|f\right|}{C}\right)d\mathbb{P}$
is finite. It is also usual to define the Morse-Transue space $M^{\psi}\left(\Omega,\mathbb{P}\right)$
as the subspace of $L^{\psi}\left(\Omega,\mathbb{P}\right)$ for which
$\int_{\Omega}\psi\left(\frac{\left|f\right|}{C}\right)d\mathbb{P}$
is finite for any constant $C>0$. $L^{\psi}\left(\Omega,\mathbb{P}\right)$
and $M^{\psi}\left(\Omega,\mathbb{P}\right)$, endowed with the \emph{Luxemburg
norm} $\left\Vert \cdot\right\Vert _{\psi}$ given by
\[
\left\Vert f\right\Vert _{\psi}=\inf\left\{ C>0,\,\int_{\Omega}\psi\left(\frac{\left|f\right|}{C}\right)d\mathbb{P}\leq 1\right\} ,
\]
are Banach spaces. Then for $\alpha>-1$, we introduce the \emph{weighted
Bergman-Orlicz space} $A_{\alpha}^{\psi}\left(\mathbb{B}_{N}\right)$
and the \emph{little weighted Bergman-Orlicz space} $AM_{\alpha}^{\psi}\left(\mathbb{B}_{N}\right)$
as the spaces of those functions in $H\left(\mathbb{B}_{N}\right)$, the algebra of all holomorphic functions on $\B_N$,
which also belongs to $L^{\psi}\left(\mathbb{B}_{N},v_{\alpha}\right)$
and $M^{\psi}\left(\mathbb{B}_{N},v_{\alpha}\right)$ respectively, where $dv_{\alpha}=c_{\alpha}(1-|z|^2)^{\alpha}dv$ is the normalized weighted Lebesgue measure on the ball.
For any Orlicz function $\psi$, $A_{\alpha}^{\psi}\left(\mathbb{B}_{N}\right)$ and $AM_{\alpha}^{\psi}\left(\mathbb{B}_{N}\right)$, $\alpha >-1$, endowed with the Luxemburg norm $\left\Vert\cdot \right\Vert _{A_{\alpha}^{\psi}}$ inherited from $L^{\psi}(\B_N,v_{\alpha})$, are Banach spaces.

Similarly generalizing the usual Hardy spaces we define the Hardy-Orlicz
space $H^{\psi}\left(\mathbb{B}_{N}\right)$ as follows:
\[
H^{\psi}\left(\mathbb{B}_{N}\right):=\left\{ f\in H\left(\mathbb{B}_{N}\right),\,\left\Vert f\right\Vert _{H^{\psi}}:=\sup_{0<r<1}\left\Vert f_{r}\right\Vert _{\psi}<\infty\right\} ,
\]
where $\left\Vert \cdot\right\Vert _{\psi}$ is the Luxembourg norm
on the Orlicz space $L^{\psi}\left(\mathbb{S}_{N},\sigma_{N}\right)$
and $f_{r}$ is given by $f_{r}(z)=f(rz)$. Here $\sigma_N$ stands for the normalized rotation-invariant positive Borel measure on the unit sphere $\mathbb{S}_N=\partial \B_N$. For any Orlicz function
$\psi$, $H^{\psi}\left(\mathbb{B}_{N}\right)$ can be identified with a closed subspace of $L^{\psi}\left(\mathbb{S}_{N},\sigma_{N}\right)$. We then define the \emph{little Hardy-Orlicz space} $HM^{\psi}\left(\mathbb{B}_{N}\right)$ as $H^{\psi}\left(\mathbb{B}_{N}\right)\cap M^{\psi}\left(\mathbb{S}_{N},\sigma_{N}\right)$. Endowed with $\left\Vert \cdot \right\Vert _{H^{\psi}}$, $H^{\psi}\left(\mathbb{B}_{N}\right)$ and $HM^{\psi}\left(\mathbb{B}_{N}\right)$ are Banach spaces \cite{charpentier_composition_2011}.

When $\psi(x)=x^{p}$, $1\leq p<\infty$, $A_{\alpha}^{\psi}\left(\mathbb{B}_{N}\right)$
and $H^{\psi}\left(\mathbb{B}_{N}\right)$ are the classical weighted
Bergman space $A_{\alpha}^{p}\left(\mathbb{B}_{N}\right)$ and Hardy
space $H^{p}\left(\mathbb{B}_{N}\right)$, respectively. Moreover we have the following \cite{rao_theory_1991}
\[
H^{\infty}=\bigcap_{\psi\mbox{ Orlicz}}A_{\alpha}^{\psi}\left(\mathbb{B}_{N}\right)=\bigcap_{\psi\mbox{ Orlicz}}H^{\psi}\left(\mathbb{B}_{N}\right),
\]
while trivially $A_{\alpha}^{1}\left(\mathbb{B}_{N}\right)=\cup_{\psi\mbox{ Orlicz}}A_{\alpha}^{\psi}\left(\mathbb{B}_{N}\right)$
and $H^{1}\left(\mathbb{B}_{N}\right)=\cup_{\psi\mbox{ Orlicz}}H^{\psi}\left(\mathbb{B}_{N}\right)$. We refer
to \cite{charpentier_composition_2013,lefevre_compact_2013} for an introduction
to weighted Bergman-Orlicz spaces and to \cite{charpentier_composition_2011,lefevre_compact_2009}
for a discussion about Hardy-Orlicz spaces (on the disc and on the ball); to learn more about the
general theory of Orlicz spaces, we refer to \cite{krasnoselskii_convex_1961,rao_theory_1991}.

As $A_{\alpha}^{\psi}(\B_N)$ and $H^{\psi}(\B_N)$ are continuously embedded into $H(\B_N)$, some sharp inclusions
$$A_{\alpha}^{\psi}\left(\mathbb{B}_{N}\right)\subseteq H_{v}^{\infty}\left(\mathbb{B}_{N}\right)\mbox{ and }H^{\psi}\left(\mathbb{B}_{N}\right)\subseteq H_{w}^{\infty}\left(\mathbb{B}_{N}\right)$$
hold for some \emph{typical radial weights} $v$ and $w$, where $H_{v}^{\infty}\left(\mathbb{B}_{N}\right)$ is the \emph{weighted Banach space} - or \emph{growth space}- associated with $v$:
\begin{equation}\label{growth-space}
H_{v}^{\infty}:=\left\{ f\in H\left(\mathbb{B}_{N}\right),\,\left\Vert f\right\Vert _{v}:=\sup_{z\in\mathbb{B}_{N}}\left|f(z)\right|v(z)<\infty\right\}.
\end{equation}
We recall that a \emph{weight} $v$ on $\mathbb{B}_{N}$
is a continuous (strictly) positive function on $\mathbb{B}_{N}$,
which is \emph{radial} if $v(z)=v(|z|)$ for all $z\in\mathbb{B}_{N}$, and \emph{typical} if $v(z)\rightarrow 0$ as $|z|\rightarrow 1$.

\medskip{}

Our first result will say that when the Orlicz function $\psi$ grows \emph{fast}, then $A_{\alpha}^{\psi}(\B_N)$ coincides with $H_v^{\infty}(\B_N)$ for $\displaystyle{v(z)=\left[\psi^{-1}\left(\frac{1}{1-|z|}\right)\right]^{-1}}$. For instance, if $\psi(x)=e^x-1$, then $A_{\alpha}^{\psi}(\B_N)=H_v^{\infty}(\B_N)$ with $v(z)=\left[\log(\frac{e}{1-|z|})\right]^{-1}$, which is the typical growth for the functions in the Bloch space. This result echoes Kaptano\u{g}lu's observation that some weighted Bloch spaces (not the true ones) coincide with some other weighted Banach spaces \cite{kaptanoglu_carleson_2007}. An immediate consequence is that weighted and unweighted \emph{small} Bergman-Orlicz are the same. We should mention that weighted Banach spaces and their composition operators have been far more studied than Bergman-Orlicz spaces, \cite{abakumov_holomorphic_nodate,aron_spectra_2004,bierstedt_associated_1998,bonet_essential_1999,bonet_note_2003,hyvarinen_essential_2012,lusky_weighted_1995,lusky_isomorphism_2006,lusky_weighted_2010,montes-rodriguez_weighted_2000} and the references therein. We will mention from time to time in the forthcoming sections that similar results were obtained independently for $H_v^{\infty}(\D)$ and $A_{\alpha}^{\psi}(\D)$ ($\D:= \B_1$), and turn out to be identical for $\psi$ and $v$ as above.

We will also derive easily from the previous that every composition operator is bounded on $A_{\alpha}^{\psi}(\B_N)$ whenever $\psi$ grows \emph{fast}, a result which was already stated in \cite{charpentier_composition_2013} and where the machinery of Carleson measures was heavily used. Actually, the proof of our first result will point out that, more than bounded, every composition operator is \emph{order bounded} into $L^{\psi}(\B_N,v_{\alpha})$ ($\psi$ growing \emph{fast}). We recall that an operator $T$ from a Banach space $X$ to a Banach lattice $Y$ is order bounded into some subspace $Z$ of $Y$ if there exists $y\in Z$ positive such that $\sup_{\left\Vert x\right\Vert \leq 1}\left|Tx\right|\leq y$. The notion of order boundedness is often related to that of $p$-summing, nuclear or Dunford-Pettis operators \cite{diestel_absolutely_1995}, as illustrated by the result of A. Shields, L. Wallen and J. Williams, asserting that an operator from a normed space to some $L^p$ space, which is order bounded into $L^p$, is $p$-summing. From the point of view of composition operators, the notion was investigated in several papers \cite{hunziker_composition_1991,jaoua_order_1999,jarchow_functional_1995,ueki_order_2012}. On $H^{\infty}(\B_N)$ every bounded operator is trivially order bounded into $L^{\infty}(\B_N)$. For the classical Hardy and Bergman spaces $H^p(\D)$, it is shown in \cite{shapiro_compact_1973} that $C_{\phi}$ is order bounded into $L^p(\partial \D)$ if and only if it $p$-summing, when $2\leq p < \infty$ (\emph{i.e.} Hilbert-Schmidt when $p=2$). The same holds in $\B_N$ and for the Bergman spaces. The situation becomes radically different when one studies the order boundedness of composition operators on Hardy-Orlicz and Bergman-Orlicz spaces which are neither reflexive nor separable (\emph{i.e.} when $H^{\psi}(\B_N)$ and $A_{\alpha}^{\psi}(\B_N)$ differs from $HM^{\psi}(\B_N)$ and $AM_{\alpha}^{\psi}(\B_N)$ respectively). Indeed it appears, in such cases, that the order boundedness of $C_{\phi}$ acting on $H^{\psi}(\B_N)$ (resp. $A_{\alpha}^{\psi}(\B_N)$ into $L^{\psi}(\mathbb{S}_N,\sigma _N)$ (resp. $L^{\psi}(\B_N,v_{\alpha})$) is no longer stronger than its compactness (by the foregoing, this is already clear when $\psi$ grows \emph{fast}, since every composition operator is then order bounded into $L^{\psi}(\B_N,v_{\alpha})$, while the identity fails to be compact). Though $C_{\phi}$ is still compact for any $\psi$ whenever it is order bounded into $M^{\psi}(\mathbb{S}_N,\sigma _N)$ (resp. $M^{\psi}(\B_N,v_{\alpha})$), the latter does not imply any more that it is $p$-summing, for any $p<\infty$ \cite{lefevre_compact_2009}. Note that on $H^{\infty}$, $C_{\phi}$ is compact if and only if it is $p$-summing \cite{lefevre_characterizations_2005}.

In these directions, our two main results state as follows. The first one (Theorem \ref{thm|thm-equiv-OB-B-Delta2}) completes what has been said above.

\begin{Thmuncount}Let $M\geq 1$, $N>1$, $\alpha>-1$,
and $\psi$ be an Orlicz function. The following assertions are equivalent.
\begin{enumerate}
\item $\psi$ satisfies the $\Delta^{2}$--condition (a fast growth condition, see Section \ref{Section1})
\item $A_{\alpha}^{\psi}\left(\mathbb{B}_{M}\right)=H_{v}^{\infty}\left(\mathbb{B}_{M}\right)$, where $\displaystyle{v(z)=\left[\psi^{-1}\left(\frac{1}{1-|z|}\right)\right]^{-1}}$;
\item For every $\phi:\mathbb{B}_{M}\rightarrow\mathbb{B}_{M}$ holomorphic,
$C_{\phi}$ acting on $A_{\alpha}^{\psi}\left(\mathbb{B}_{M}\right)$
is order-bounded into $L^{\psi}(\B_M,v_{\alpha})$;
\item For every $\phi:\mathbb{B}_{N}\rightarrow\mathbb{B}_{N}$ holomorphic,
$C_{\phi}$ is bounded on $A_{\alpha}^{\psi}\left(\mathbb{B}_{N}\right)$;
\item $C_{\varphi}$ is bounded on $A_{\alpha}^{\psi}\left(\B_N\right)$, where $\varphi(z)=(N^{N/2}z_1z_2\ldots z_N,0')$.
\end{enumerate}
\end{Thmuncount}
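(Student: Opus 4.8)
The plan is to anchor all four remaining assertions to the $\Delta^{2}$-condition (1), establishing $(1)\Rightarrow(2)$, $(1)\Rightarrow(3)$, $(1)\Rightarrow(4)$, the trivial $(4)\Rightarrow(5)$, and the three reverse implications $(2)\Rightarrow(1)$, $(3)\Rightarrow(1)$ and $(5)\Rightarrow(1)$; together these give $(1)\Leftrightarrow(2)$, $(1)\Leftrightarrow(3)$ and $(1)\Rightarrow(4)\Rightarrow(5)\Rightarrow(1)$. The workhorse throughout is the sub-mean-value (point-evaluation) estimate: if $\|f\|_{A_\alpha^\psi(\B_M)}\le 1$ then $|f(z)|\le\psi^{-1}\!\big(C(1-|z|)^{-(M+1+\alpha)}\big)$, obtained by applying Jensen's inequality to the subharmonic function $\psi(|f|)$ over a Bergman ball of fixed radius (whose $v_\alpha$-mass is $\asymp(1-|z|)^{M+1+\alpha}$), and sharp up to constants. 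I also use that $\Delta^{2}$ is equivalent to $\psi^{-1}(t^\beta)\le C_\beta\,\psi^{-1}(t)$ for every $\beta\ge1$ and large $t$, dually to $\psi(x/c^{n})\le\psi(x)^{1/2^{n}}$.

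For $(1)\Rightarrow(2)$ the inclusion $A_\alpha^\psi\subseteq H_v^\infty$ is immediate from the point estimate and $\psi^{-1}(t^{M+1+\alpha})\le C\,\psi^{-1}(t)$; conversely, if $f\in H_v^\infty$ then $|f(z)|\le C\psi^{-1}(1/(1-|z|))$, and taking $\mu=Cc^{n}$ with $n$ large gives $\psi(|f(z)|/\mu)\le(1-|z|)^{-1/2^{n}}$, which is $v_\alpha$-integrable as soon as $1/2^{n}<\alpha+1$, so $f\in A_\alpha^\psi$. For $(1)\Rightarrow(3)$ — valid verbatim in $\B_M$ or $\B_N$ — the order-boundedness majorant $h(z)=\sup_{\|f\|\le1}|f(\phi(z))|\le\psi^{-1}\!\big(C(1-|\phi(z)|)^{-(M+1+\alpha)}\big)$ is placed into $L^\psi(v_\alpha)$ by lowering the exponent through $\Delta^{2}$ and then using the Schwarz--Pick bound $1-|\phi(z)|^{2}\ge\frac{1-|\phi(0)|}{1+|\phi(0)|}(1-|z|^{2})$ to reduce matters to $\int(1-|z|)^{-s}\,dv_\alpha<\infty$ for small $s$. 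This proves order boundedness; since it forces $\|C_\phi f\|_{A_\alpha^\psi}\le\|h\|_{L^\psi(v_\alpha)}\|f\|$, it yields $(1)\Rightarrow(4)$, and $(4)\Rightarrow(5)$ holds because $\varphi$ is a self-map of $\B_N$.

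The reverse implications all exploit the failure of $\Delta^{2}$. For $(3)\Rightarrow(1)$ I take $\phi=\mathrm{id}_{\B_M}$: order boundedness then forces the sharp majorant $\psi^{-1}\!\big(C(1-|z|)^{-(M+1+\alpha)}\big)$ into $L^\psi(v_\alpha)$, which a standard Orlicz estimate shows is impossible unless $\Delta^{2}$ holds. For $(2)\Rightarrow(1)$, if $\Delta^{2}$ fails then $v/v'$ is unbounded, where $v'=[\psi^{-1}((1-|z|)^{-(M+1+\alpha)})]^{-1}$, and the sharpness of $A_\alpha^\psi\subseteq H_{v'}^\infty$ furnishes $f\in A_\alpha^\psi$ of maximal growth with $\|f\|_v=\infty$, contradicting $A_\alpha^\psi=H_v^\infty$.

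The heart of the matter is $(5)\Rightarrow(1)$. I test $C_\varphi$ on functions depending only on the first variable, $f(w)=g(w_1)$, for which integrating out $w_2,\dots,w_N$ gives $\|f\|_{A_\alpha^\psi(\B_N)}\asymp\|g\|_{A_{\alpha+N-1}^\psi(\D)}$. Writing $P(z)=N^{N/2}z_1\cdots z_N$, so that $C_\varphi f=g\circ P$, the plan is to compute the push-forward $\nu=P_{*}(v_\alpha)$ by localizing near the torus $\{|z_i|=N^{-1/2}\}$ where $|P|$ attains its maximum $1$: a Laplace-type expansion there yields $\nu(\{|w|>1-\epsilon\})\asymp\epsilon^{\alpha+1+(N-1)/2}$ and, more precisely, $\nu\asymp v_{\alpha+(N-1)/2}$, so that $\|C_\varphi f\|_{A_\alpha^\psi(\B_N)}\asymp\|g\|_{A_{\alpha+(N-1)/2}^\psi(\D)}$. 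Boundedness of $C_\varphi$ thus forces the continuous inclusion $A_{\alpha+N-1}^\psi(\D)\subseteq A_{\alpha+(N-1)/2}^\psi(\D)$ of a larger weighted Bergman-Orlicz space into a strictly smaller one (the exponents differ precisely because $N>1$), and such an inclusion forces $\Delta^{2}$ — otherwise one constructs, via a prescribed radial growth built from $\psi^{-1}$, a function lying in the first space but not the second. I expect the push-forward computation — controlling the Jacobian near the maximizing torus and upgrading the tail-mass asymptotics to the full measure equivalence $\nu\asymp v_{\alpha+(N-1)/2}$ — together with the failure-of-$\Delta^{2}$ construction, to be the main obstacle; everything else is either soft functional analysis or a routine Orlicz estimate.
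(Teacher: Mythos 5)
Your skeleton is sound and every implication is addressed, but your route differs from the paper's in several reverse implications, so let me compare. For $(5)\Rightarrow(1)$ the paper does not compute the push-forward measure $\nu=P_{*}(v_\alpha)$: it cites the MacCluer--Shapiro/Ahern estimate $\mu_{\varphi,\alpha}\left(S\left(e_{1},h\right)\right)\sim h^{(2\alpha+N+3)/2}$ and feeds it into a Carleson-type necessary condition for boundedness ($\varrho_{\phi,\alpha}(h)\leq 1/\psi\left(A\psi^{-1}\left(1/h^{N+\alpha+1}\right)\right)$, obtained by testing on the standard functions $f_a$), from which failure of $\Delta^2$ gives unboundedness since $\frac{2N+2\alpha+2}{2\alpha+N+3}>1$ exactly when $N>1$. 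Your reduction to the one-variable embedding $A_{\alpha+N-1}^{\psi}(\D)\subseteq A_{\alpha+(N-1)/2}^{\psi}(\D)$ is an elegant repackaging of the same geometric content (your corona exponent $\alpha+1+(N-1)/2$ is consistent with their Carleson-box exponent after dividing by $h$, using the rotation-invariance of $\nu$), and it makes transparent why $N>1$ matters; the final step is cleanest via point evaluations rather than constructing a function of prescribed radial growth. For $(2)\Rightarrow(1)$ the paper goes through an Abakumov--Doubtsov characterization of growth spaces by finitely many dominating functions and then through the boundedness of all composition operators; your duality argument (unboundedness of $v/v'$ against the sharp lower bound for $\left\Vert\delta_z\right\Vert$) is more elementary. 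Likewise your $(3)\Rightarrow(1)$ via $\phi=\mathrm{id}$ and Markov's inequality is exactly what the paper does, but only for $M=1$; it indeed works uniformly for all $M\geq1$, whereas the paper detours through the MacCluer symbol when $M>1$.

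Two overclaims should be repaired, though neither is fatal. First, the two-sided equivalence $\nu\asymp v_{\alpha+(N-1)/2}$ as measures requires pointwise bounds on the density of $\nu$, which the tail asymptotics $\nu(\{|w|>1-\epsilon\})\asymp\epsilon^{\alpha+1+(N-1)/2}$ do not give; but you do not need it --- rotation-invariance of $\nu$ plus the Carleson-box (or corona) asymptotics, tested against the functions $f_a$ of the paper, already yield unboundedness when $\Delta^2$ fails. Second, in $(2)\Rightarrow(1)$ the pointwise sharpness of the evaluation bound produces, for each $z$ near the boundary, a \emph{different} unit-ball function attaining the growth $1/v'(z)$; a single $f\in A_\alpha^\psi$ with $\left\Vert f\right\Vert_v=\infty$ would need a gliding-hump construction. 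The clean fix is to note that set equality of two Banach spaces continuously embedded in $H(\B_M)$ forces norm equivalence by the closed graph theorem, whence $\left\Vert\delta_z\right\Vert_{(A_\alpha^\psi)^{*}}\lesssim 1/v(z)$, contradicting the lower bound $\gtrsim 1/v'(z)$ along the sequence where $v/v'$ blows up.
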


The second one (Theorem \ref{main-thm-compacntess-Deltaup2}) is a generalization of \cite[Theorem 3.24]{lefevre_compact_2009} to $N\geq 2$ and to the Bergman-Orlicz setting.

\begin{Thmuncount}
Let $\alpha > -1$, let $\psi$ be an Orlicz function satisfying
the $\Delta^{2}$--condition and let $\phi:\mathbb{B}_{N}\rightarrow\mathbb{B}_{N}$
be holomorphic. The following assertions are equivalent.
\begin{enumerate}
\item $C_{\phi}$ is compact on $H^{\psi}(\B_N)$ (resp. $A_{\alpha}^{\psi}\left(\mathbb{B}_{N}\right)$);
\item $C_{\phi}$ acting on $H^{\psi}(\B_N)$ (resp. $A_{\alpha}^{\psi}\left(\mathbb{B}_{N}\right)$)
is order bounded into $M^{\psi}(\mathbb{S}_N,\sigma _N)$ (resp. $M^{\psi}(\B_N,v_{\alpha})$);
\item $C_{\phi}$ is weakly compact on $H^{\psi}(\B_N)$ (resp. $A_{\alpha}^{\psi}\left(\mathbb{B}_{N}\right)$).
\end{enumerate}
\end{Thmuncount}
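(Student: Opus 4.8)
The plan is to establish the cycle $(2)\Rightarrow(1)\Rightarrow(3)\Rightarrow(2)$, with only the last arrow carrying genuine content, proved in contrapositive form. Two of the three implications are essentially free. The implication $(2)\Rightarrow(1)$ is the general fact --- valid for \emph{every} Orlicz function, as recorded in the introduction --- that order boundedness into the Morse--Transue space forces compactness, so I would simply invoke it. The implication $(1)\Rightarrow(3)$ is the trivial observation that a compact operator is weakly compact. Everything therefore reduces to showing that if $C_\phi$ is \emph{not} order bounded into $M^\psi$, then it fails to be weakly compact. Combined with the two easy arrows this yields the full chain $(1)\Leftrightarrow(2)\Leftrightarrow(3)$ as a dichotomy: $C_\phi$ is either order bounded into $M^\psi$, hence compact and weakly compact, or it fails all three.

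The first step is to reduce order boundedness into $M^\psi$ to a concrete integrability condition on a modulus envelope. Since $\psi\in\Delta^2$, the first theorem identifies $A_\alpha^\psi(\B_N)=H_v^\infty(\B_N)$ with $v(z)=[\psi^{-1}(1/(1-|z|))]^{-1}$; the Hardy space $H^\psi(\B_N)$ is handled through the sharp inclusion $H^\psi\subseteq H_w^\infty$ and the boundary values of $\phi$. Consequently the unit ball of $A_\alpha^\psi$ consists of the $f$ with $|f(z)|\le 1/v(z)=\psi^{-1}(1/(1-|z|))$, and this bound is attained at each point up to a constant through peak functions of controlled norm. Hence the envelope $\sup_{\|f\|\le 1}|C_\phi f(z)|$ is comparable to $g(z):=\psi^{-1}\big(1/(1-|\phi(z)|)\big)$, and $C_\phi$ is order bounded into $M^\psi(\B_N,v_\alpha)$ (resp. $M^\psi(\S_N,\sigma_N)$) if and only if $g\in M^\psi$, i.e. $\int\psi(\lambda g)\,dv_\alpha<\infty$ for every $\lambda>0$. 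The negation of $(2)$ thus furnishes a fixed $\lambda_0>0$ with $\int\psi(\lambda_0 g)\,dv_\alpha=+\infty$.

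From this divergence I would extract a sequence of pairwise separated windows $W_k$ near $\partial\B_N$ (nonisotropic Carleson boxes, or boundary caps in the Hardy case), each carrying a fixed positive share of the mass of $\psi(\lambda_0 g)$ while $\phi$ pushes $W_k$ close to the sphere; along these I would build normalized functions $f_k\in A_\alpha^\psi$, each peaking near the image $\phi(W_k)$ and small away from it, so that $|C_\phi f_k|$ stays bounded below on $W_k$. The fast growth encoded in $\Delta^2$ is precisely what makes the Luxemburg norms of disjointly concentrated bumps behave like a supremum rather than a sum: this forces $(f_k)$ to be equivalent to the $c_0$ basis while keeping $(C_\phi f_k)$ bounded below. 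Then $C_\phi$ restricts to an isomorphism onto a subspace isomorphic to $c_0$; since the closed unit ball of $c_0$ is not relatively weakly compact, $C_\phi(B_{A_\alpha^\psi})$ cannot be relatively weakly compact either, so $(3)$ fails.

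The genuine difficulty lies entirely in this last step: converting the \emph{quantitative} failure of order boundedness (divergence of a single integral) into a \emph{structural} obstruction (a copy of $c_0$ on which $C_\phi$ is bounded below). The delicate points are the geometric selection of well-separated windows capturing a definite portion of the divergent mass, and the calibration of the peak functions so that $\Delta^2$ simultaneously delivers the $c_0$-equivalence of $(f_k)$ and the lower bound on $(C_\phi f_k)$. The Bergman and Hardy cases run in parallel: for $A_\alpha^\psi$ one works with the weighted volume $v_\alpha$ and the identity $A_\alpha^\psi=H_v^\infty$, while for $H^\psi$ one replaces $g$ by its boundary analog and uses caps in $\sigma_N$; in both, the compactness produced by $(2)\Rightarrow(1)$ closes the loop.
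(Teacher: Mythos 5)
Your architecture is sound and genuinely different from the paper's: you close the cycle $(2)\Rightarrow(1)\Rightarrow(3)\Rightarrow(2)$, putting all the work into $\neg(2)\Rightarrow\neg(3)$, whereas the paper proves $(1)\Rightarrow(2)$ by combining the Berezin-kernel compactness criterion (Corollary \ref{cor|corollaire_Berezin_cont_comp}) with a covering argument in which the $\Delta^{2}$--condition absorbs a factor $h^{-(N+\alpha+1)}$, feeding the resulting corona estimate into Theorem \ref{thm|thm_2nd_applic}, and proves $(3)\Rightarrow(1)$ from the quantitative weak-compactness criterion of \cite{lefevre_criterion_2008} tested on the functions $f_a$. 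Your reduction of order boundedness into $M_{\alpha}^{\psi}$ to the condition $\psi^{-1}\left(1/(1-|\phi|)^{N+\alpha+1}\right)\in M_{\alpha}^{\psi}$ is exactly Proposition \ref{prop|prop-carac-OB-evaluation} and is valid for both $\alpha>-1$ and $\alpha=-1$ (though it has nothing to do with the identification $A_{\alpha}^{\psi}=H_{w^{\psi}}^{\infty}$; it is just the two-sided point-evaluation estimate).

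The gap sits in the only step that carries content. First, ``a fixed positive share of the mass'' of a divergent integral is not meaningful as stated; what you actually need is a sequence of caps or coronas with $h_k\to0$ along which $\mu_{\phi,\alpha}$ is \emph{large}, i.e. $\mu_{\phi,\alpha}\left(C_{\alpha}(h_k)\right)\geq c/\psi\left(A\psi^{-1}\left(1/h_k^{N+\alpha+1}\right)\right)$ for one fixed $A$, and extracting this from the divergence of $\int\psi(\lambda_0 g)\,dv_{\alpha}$ is precisely the contrapositive of Theorem \ref{thm|thm_2nd_applic} (2)--(b), which you neither invoke nor reprove. Second, and more seriously: a sequence equivalent to the unit vector basis of $c_0$ is \emph{weakly null}, so producing $(f_k)$ equivalent to the $c_0$-basis with $\left\Vert C_{\phi}f_k\right\Vert_{\psi}\geq\delta$ proves nothing by itself --- the set $\left\{C_{\phi}f_k\right\}$ could still be relatively weakly compact. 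You must show $C_{\phi}$ \emph{fixes} a copy of $c_0$, i.e. the two-sided estimate $\left\Vert\sum_k a_k\, f_k\circ\phi\right\Vert_{\psi}\approx\sup_k|a_k|$ on the whole span; the lower estimate requires the $f_k\circ\phi$ to be essentially disjointly supported in a quantitative sense (something like $\sum_{j\neq k}|f_j\circ\phi|\leq\frac{1}{2}|f_k\circ\phi|$ on $W_k$ together with $\delta_k\left\Vert\chi_{W_k}\right\Vert_{\psi}\gtrsim1$). Since the $f_k\circ\phi$ are moduli of holomorphic functions with full support, this separation is not automatic, and it is exactly where the hard work and the $\Delta^{2}$ hypothesis must enter. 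As written, your proposal is a plausible program --- essentially inlining the proof of the criterion of \cite{lefevre_criterion_2008} --- but not yet a proof.
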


\medskip{}

By many aspects the behavior of a composition operator $C_{\phi}$ is related to how frequently and sharply $\phi(\B_N)$ touches the boundary of $\B_N$. In particular, it is known that if $\phi(\B_N)$ is contained in some \emph{Kor\'anyi approach regions} (\emph{i.e.} \emph{Stolz domains} for $N=1$), then $C_{\phi}$ is bounded or compact on $H^p(\B_N)$ and $A_{\alpha}^p(\B_N)$, depending on the opening of the Kor\'anyi region \cite{cowen_composition_1995,shapiro_composition_1993}. In fact the condition on the opening of the Kor\'anyi region $K$ which ensures that $\phi(\B_N)\subset K$ implies $C_{\phi}$ compact also ensures that $\phi(\B_N)\subset K$ implies $C_{\phi}$ order bounded into $L^{\psi}(\mathbb{S}_N,\sigma _N)$. In \cite{charpentier_compact_2013} it was proven that $\phi(\B_N)$ included in any such region does no longer automatically imply the compactness of $C_{\phi}$ on $H^{\psi}(\B_N)$ or $A_{\alpha}^{\psi}(\B_N)$. In the last section, we will geometrically illustrate the idea developed in the previous paragraph by showing that if $\phi(\B_N)$ is included in some Kor\'anyi region, then $C_{\phi}$ is order bounded into $L^{\psi}(\B_N,v_{\alpha})$ (resp. $L^{\psi}(\mathbb{S}_N,\sigma _N)$), whatever $\psi$. The faster the Orlicz function will grow, the less restrictive will be the condition on the opening of the region.

We shall mention that we will make use of Carleson measure techniques only for the equivalence between (1) and (3) in our second main theorem (through Corollary \ref{cor|corollaire_Berezin_cont_comp}).

The paper is organized as follows: in the next section we will provide the remaining definitions and show that \emph{small} weighted Bergman-Orlicz spaces coincide with some weighted Banach spaces. In Section 3, we will focus on the boundedness, the compactness, and the order boundedness of composition operators acting on \emph{small} weighted
Bergman-Orlicz spaces and Hardy-Orlicz spaces. The last section will be devoted to some geometric illustration of the general idea of the paper.
\begin{Notation}
Given two points $z,w\in\mathbb{C}^{N}$, the euclidean inner product
of $z$ and $w$ will be denoted by $\left\langle z,w\right\rangle $,
that is $\left\langle z,w\right\rangle =\sum_{i=1}^{N}z_{i}\overline{w_{i}}$;
the notation $\left|\cdot\right|$ will stand for the associated norm,
as well as for the modulus of a complex number.

The notation $\T$ will be used to denote the unit sphere $\mathbb{S}_1=\partial \D$.


We recall that $dv_{\alpha}=c_{\alpha}(1-|z|^2)^{\alpha}dv$ is the normalized weighted Lebesgue measure on the ball and so that $c_{\alpha}=\frac{\Gamma(N+\alpha+1)}{N!\Gamma(\alpha +1)}$, where $\Gamma$ is the Gamma function.

$d\sigma _N$ will stand for the normalized rotation-invariant positive Borel measure on $\mathbb{S}_N$.

We will use the notations $\lesssim$ and $\gtrsim$ for one-sided
estimates up to an absolute constant, and the notation $\approx$
for two-sided estimates up to absolute constants.
\end{Notation}

\section{A menagerie of spaces}

\subsection{Weighted Bergman-Orlicz spaces and Hardy-Orlicz spaces}\label{Section1}

\subsubsection*{Some important classes of Orlicz functions.}

Orlicz-type spaces can be distinguished by the regularity and the
growth at infinity of the defining Orlicz function. We recall that
two \emph{equivalent} Orlicz functions define the same Orlicz space,
where two Orlicz functions $\Psi_{1}$ and $\Psi_{2}$ are said to be equivalent
if there exists some constant $c$ such that
\[
c\Psi_{1}(cx)\leq\Psi_{2}(x)\leq c^{-1}\Psi_{1}(c^{-1}x),
\]
for any $x$ large enough. In the sequel, we will consider Orlicz
functions essentially of four types:
\begin{itemize}
\item $\psi$ satisfies the $\Delta_{2}$--condition (or belongs to the $\Delta_{2}$--class)
if there exists $C>0$ such that $\psi(2x)\leq C\psi(x)$ for any
$x$ large enough.
\item $\psi$ satisfies the $\Delta^{1}$--condition (or belongs to the $\Delta^{1}$--class)
if there exists $C>0$ such that $x\psi(x)\leq\psi(Cx)$ for any $x$
large enough .
\item $\psi$ satisfies the $\Delta^{2}$--condition (or belongs to the $\Delta^{2}-$class)
if there exists $C>0$, such that $\psi\left(x\right)^{2}\leq\psi\left(Cx\right)$
for any $x$ large enough.
\item $\psi$ satisfies the $\nabla_{2}$--condition (or belongs to the $\nabla_{2}$--class)
if its complementary function $\Phi$ belongs to the $\Delta_{2}$--class,
where $\Phi$ is defined by
\[
\Phi(y)=\sup_{x\geq0}\left\{ xy-\psi(x)\right\} ,\, y\geq0.
\]
\end{itemize}
The $\nabla_{2}$--condition is a regularity condition satisfied by
most of the Orlicz functions, and $\Delta_{2}$, $\Delta^{1}$ and
$\Delta^{2}$--conditions are growth conditions. It is easily seen
that the $\Delta^{2}$--condition implies the $\Delta^{1}$--condition
which in its turn implies the $\nabla_{2}$. The $\Delta_{2}$--class is disjoint from both $\Delta^1$ and $\Delta^2$--classes.

Using that Orlicz functions are increasing convex functions and an
easy induction, we get the following characterizations of $\Delta_{2}$,
$\Delta^{1}$ and $\Delta^{2}$--classes \cite{charpentier_operateurs_2010,krasnoselskii_convex_1961}.
\begin{proposition}
\label{prop|prop-Orlicz-growth}Let $\psi$ be an Orlicz function.
\begin{enumerate}
\item $\psi$ belongs to the $\Delta_{2}$--class if and only if for one
$b>1$ (or equivalently all $b>1$) there exists $C>0$ such that
$\psi(bx)\leq C\psi(x)$ for any $x$ large enough.
\item $\psi$ belongs to the $\Delta^{1}$--class if and only if for one
$b>1$ (or equivalently all $b>1$) there exists $C>0$ such that
$x^{b}\psi(x)\leq \psi(Cx)$ for any $x$ large enough.
\item $\psi$ belongs to the $\Delta^{2}$--class if and only if for one
$b>1$ (or equivalently all $b>1$) there exists $C>0$ such that
$\psi(x)^{b}\leq \psi(Cx)$ for any $x$ large enough.
\end{enumerate}
\end{proposition}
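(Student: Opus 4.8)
The plan is to exploit the parallel structure of the three items: in each case the condition ``for all $b>1$'' trivially implies the condition ``for one $b>1$'', while the defining condition stated just before the proposition is itself an instance of the ``for one $b>1$'' condition (the case $b=2$ for $\Delta_{2}$ and $\Delta^{2}$, and the borderline case $b=1$ for $\Delta^{1}$). Thus it suffices, for each item, to prove the two implications
\[
[\text{for one } b>1]\ \Longrightarrow\ [\text{definition}]\ \Longrightarrow\ [\text{for all } b>1],
\]
and both will follow by iterating the relevant functional inequality and invoking the monotonicity of $\psi$. Throughout I would use that an Orlicz function is increasing, that $\psi(x)\to\infty$ (hence $\psi(x)\ge 1$ and $x\ge 1$ once $x$ is large), and the harmless normalization $C\ge 1$: since $\psi$ is increasing, any lower bound of the form $\psi(Cx)\ge\dots$ persists when $C$ is replaced by a larger constant, so the iterations below never leave the range ``$x$ large enough''.

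For item (1), first I would iterate $\psi(2x)\le C\psi(x)$: an easy induction gives $\psi(2^{n}x)\le C^{n}\psi(x)$ for $x$ large. Given any $b>1$, pick $n$ with $2^{n}\ge b$; then monotonicity yields $\psi(bx)\le\psi(2^{n}x)\le C^{n}\psi(x)$, which is the ``for all $b$'' statement. Conversely, starting from $\psi(bx)\le C\psi(x)$ for one $b>1$, I would iterate to $\psi(b^{n}x)\le C^{n}\psi(x)$ and choose $n$ with $b^{n}\ge 2$, so that $\psi(2x)\le\psi(b^{n}x)\le C^{n}\psi(x)$ recovers the definition.

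Items (2) and (3) run the same way, the only extra ingredient being that powers are compared using $x\ge 1$ and $\psi(x)\ge 1$. For (3), iterating $\psi(x)^{2}\le\psi(Cx)$ (with $C\ge1$, so $Cx\ge x$ stays large) gives $\psi(C^{n}x)\ge\psi(x)^{2^{n}}$; choosing $2^{n}\ge b$ and using $\psi(x)\ge 1$ yields $\psi(x)^{b}\le\psi(x)^{2^{n}}\le\psi(C^{n}x)$, i.e.\ the ``for all $b$'' condition. The reverse implication is immediate when the given $b\ge2$ (then $\psi(x)^{2}\le\psi(x)^{b}\le\psi(Cx)$), and for $1<b<2$ one iterates $\psi(C^{n}x)\ge\psi(x)^{b^{n}}$ until $b^{n}\ge2$. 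For (2) I would iterate $x\psi(x)\le\psi(Cx)$ into $\psi(C^{n}x)\ge x^{n}\psi(x)$ (for $x\ge1$), then dominate $x^{b}$ by $x^{n}$ with $n\ge b$; here the implication ``for one $b>1$'' $\Rightarrow$ definition is trivial, since $b>1$ and $x\ge1$ force $x\psi(x)\le x^{b}\psi(x)\le\psi(Cx)$.

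There is no genuine obstacle here; the argument is the ``easy induction'' announced before the statement. The only point requiring a little care is the bookkeeping of the thresholds and the constants under iteration, that is, checking that the normalization $C\ge1$ keeps each iterate in the region where the base inequality is valid and that finitely many iterations suffice to pass between the fixed exponent ($2$, or the factor $x$) and an arbitrary $b>1$.
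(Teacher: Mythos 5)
Your proof is correct and is precisely the ``easy induction'' that the paper invokes without writing out (it only remarks that the result follows from monotonicity and induction, citing \cite{charpentier_operateurs_2010,krasnoselskii_convex_1961}): iterate the base inequality, use that $\psi$ is increasing and that $x\geq 1$ and $\psi(x)\geq 1$ for large $x$, and check that the normalization $C\geq 1$ keeps the iterates in the valid range. Nothing further is needed.
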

\smallskip{}
If $\psi\in\Delta_{2}$, then $x^{q}\lesssim\psi(x)\lesssim x^{p}$
for some $1\leq q\leq p<+\infty$ and $x$ large enough; typical examples
are $x\mapsto ax^{p}\left(\left(\log(1+x)\right)^{b}\right)$, $1<p<+\infty$,
$a>0$ and $b>0$. If $\psi\in\Delta^{1}$, then $\psi(x)\geq\exp\left(a(\log x)^{2}\right)$
for some $a>0$ and $x$ large enough. The $\Delta^{2}$--class consists
of functions $\psi$ satisfying $\psi(x)\geq e^{x^a}$
for some $a>0$ and for any $x$ large enough. Orlicz functions of
the form $x\mapsto\exp\left(a(\log(x+1))^{b}\right)-1$ with $a>0$
and $b\geq2$ are examples of $\Delta^{1}$-functions, which are not in the $\Delta^{2}$--class.
Moreover $x\mapsto e^{ax^{b}}-1$ belongs to the $\Delta^{2}-$class,
whenever $a>0$ and $b\geq1$. Note that  $x\mapsto x^{p}$,
$1<p<\infty$, satisfies the $\nabla_{2}$--condition, and
not the $\Delta^{1}$--one.

\smallskip{}

We do not go into more details and we refer to \cite{charpentier_compact_2013,lefevre_compact_2009}
for information sufficient for our purpose. Actually the classification of Orlicz functions is much more refined; to learn more about
the general theory of Orlicz spaces, see \cite{krasnoselskii_convex_1961} or \cite{rao_theory_1991}.\medskip{}

\medskip{}

As in the classical case, it can be convenient for the presentation
to sometimes look at Hardy-Orlicz spaces as a limit case of weighted
Bergman-Orlicz spaces $A_{\alpha}^{\psi}\left(\mathbb{B}_{N}\right)$
when $\alpha$ tends to $-1$. The following standard lemma gives
some sense to that.
\begin{lemma}
\label{lem|lemme-alpha-tend-moins-1}Let $f:\mathbb{\mathbb{B}_N}\mapsto[0,+\infty[$
be continuous and such that the function $r\mapsto\int_{\mathbb{S}_{N}}f(rz)d\sigma_{N}(z)$, $r\in [0,1)$,
is increasing. Then
\[
\lim_{r\rightarrow1}\int_{\mathbb{S}_{N}}f(r\zeta)d\sigma_{N}(\zeta)=\lim_{\alpha\rightarrow-1}\int_{\mathbb{B}_{N}}f(z)dv_{\alpha}(z),
\]
where the limits may be $\infty$.\end{lemma}
\begin{proof}
Let $f$ be as in the lemma and let us write, for $-1<\alpha<0$ and
$0<\eta<1$,
\begin{multline}\label{multlem}
\int_{\mathbb{B}_{N}}f(z)dv_{\alpha}(z)=2Nc_{\alpha}\int_{0}^{1-\eta}\int_{\mathbb{S}_{N}}f(r\zeta)d\sigma_{N}(\zeta)r^{2N-1}\left(1-r^{2}\right)^{\alpha}dr\\+2Nc_{\alpha}\int_{1-\eta}^{1}\int_{\mathbb{S}_{N}}f(r\zeta)d\sigma_{N}(\zeta)r^{2N-1}\left(1-r^{2}\right)^{\alpha}dr.
\end{multline}
Observe that because $c_{\alpha}\rightarrow0$ as $\alpha\rightarrow-1$, the first term of the right-hand side tends to $0$. Since $F:r\mapsto\int_{\mathbb{S}_{N}}f(rz)d\sigma_{N}(z)$ is increasing, either $F$ is convergent, or it increases to $\infty$ as $r\rightarrow 1$. Assume first that $F(r)\rightarrow \infty$ as $r\rightarrow 1$. Then for any $A>0$ there exists $\eta(A)\in (0,1)$, close enough to $1$, such that
$$\int_{\mathbb{S}_{N}}f(r\zeta)d\sigma_{N}(\zeta) \geq A,\,r\in (1-\eta ,1).$$
Moreover $2Nc_{\alpha}\int_{0}^{1}\left(1-r^{2}\right)^{\alpha}r^{2N-1}dr=1$ and $2Nc_{\alpha}\int_{0}^{1-\eta}\left(1-r^{2}\right)^{\alpha}r^{2N-1}dr\rightarrow 0$ as $\alpha \rightarrow -1$, hence
\begin{equation}\label{eq00}2Nc_{\alpha}\int_{1-\eta}^{1}\left(1-r^{2}\right)^{\alpha}r^{2N-1}dr\rightarrow 1\text{ as }\alpha \rightarrow -1,
\end{equation}
for any $\eta \in (0,1)$.
Thus
$$2Nc_{\alpha}\int_{1-\eta(A)}^{1}\int_{\mathbb{S}_{N}}f(r\zeta)d\sigma_{N}(\zeta)r^{2N-1}\left(1-r^{2}\right)^{\alpha}dr\geq 2ANc_{\alpha}\int_{1-\eta(A)}^{1}\left(1-r^{2}\right)^{\alpha}r^{2N-1}dr,$$
which tends to $A$ as $\alpha \rightarrow -1$. Since the first term of the right-hand side of (\ref{multlem}) is non-negative, we finally get
$$\lim _{\alpha \rightarrow -1}\int_{\mathbb{B}_{N}}f(z)dv_{\alpha}(z) \geq A$$
for any $A$ positive, that is
$$\lim _{\alpha \rightarrow -1}\int_{\mathbb{B}_{N}}f(z)dv_{\alpha}(z)=\lim _{r\rightarrow 1}\int_{\mathbb{S}_{N}}f(rz)d\sigma_{N}(z)=\infty.$$

Let now assume that $\lim _{r\rightarrow 1}F(r)$ exists in $(0,\infty)$. For every $\varepsilon >0$ one can find $\eta(\varepsilon) \in (0,1)$ close enough to $1$ such that
$$\int_{\mathbb{S}_{N}}f(r\zeta)d\sigma_{N}(\zeta)-\lim_{r\rightarrow 1}F(r)\leq \varepsilon,\,r\in (1-\eta,1).$$
Using (\ref{eq00}) we get, for any $\varepsilon >0$,
\begin{multline*}\lim_{\alpha \rightarrow -1}\int_{\mathbb{B}_{N}}f(z)dv_{\alpha}(z) - \lim _{r\rightarrow 1}F(r)\\ = \lim_{\alpha \rightarrow -1}2Nc_{\alpha}\int_{1-\eta}^{1}\left(\int_{\mathbb{S}_{N}}f(r\zeta)-\lim_{r\rightarrow 1}F(r)\right)\left(1-r^{2}\right)^{\alpha}r^{2N-1}dr \leq \varepsilon,
\end{multline*}
as expected.
\end{proof}

In particular, given $g\in H(\mathbb{B}_{N})$, $\psi$ an Orlicz
function and $a>0$, and applying Lemma \ref{lem|lemme-alpha-tend-moins-1}
to the subharmonic function $f=\psi\left(\frac{\left|g\right|}{a}\right)$
we get ${\displaystyle \left\Vert g\right\Vert _{H^{\psi}}}=\lim_{\alpha\rightarrow-1}\left\Vert g\right\Vert _{A_{\alpha}^{\psi}}$.

For now on we will use the unique notation $A_{\alpha}^{\psi}\left(\mathbb{B}_{N}\right)$ to denote
the weighted Bergman-Orlicz space when $\alpha>-1$, and the Hardy-Orlicz
space $H^{\psi}\left(\mathbb{B}_{N}\right)$ when $\alpha=-1$. More
abusively, $L_{\alpha}^{\psi}$ (resp. $M_{\alpha}^{\psi}$), $\alpha\geq-1$,
will often stand for $L^{\psi}\left(\mathbb{B}_{N},v_{\alpha}\right)$
(resp. $M^{\psi}\left(\mathbb{B}_{N},v_{\alpha}\right)$) when $\alpha>-1$
and for $L^{\psi}\left(\mathbb{S}_{N},\sigma_{N}\right)$ (resp. $M^{\psi}\left(\mathbb{S}_{N},\sigma_{N}\right)$)
when $\alpha=-1$, $v_{\alpha}$ being $\sigma_{N}$ when $\alpha=-1$.

\medskip{}

The following result specifies the topological and dual properties
of Hardy-Orlicz and Bergman-Orlicz spaces, pointing out that if we
intend to generalize the classical Hardy or Bergman spaces and provide with a refined scale of spaces up to $H^{\infty}$, then we must consider
Banach spaces with less nice properties.
\begin{theorem}
[\cite{charpentier_composition_2011,charpentier_composition_2013}]\label{thm|weak-star-topo-H-O}Let $\alpha \geq -1$ and let $\psi$ be an Orlicz function.
\begin{enumerate}
\item $AM_{\alpha}^{\psi}\left(\mathbb{B}_{N}\right)$ is the closure of $H^{\infty}$ in $L_{\alpha}^{\psi}$; in particular, the set of all polynomials is dense in $AM^{\psi}\left(\mathbb{B}_{N}\right)$.
\item On the unit ball of $AM_{\alpha}^{\psi}\left(\mathbb{B}_{N}\right)$ (or equivalently on \emph{every} ball), the induced weak-star topology coincides with that of uniform convergence on compacta of $\mathbb{B}_{N}$.
\item $A_{\alpha}^{\psi}\left(\mathbb{B}_{N}\right)$ is weak-star closed in $L_{\alpha}^{\psi}$.
\item Assume that $\psi$ satisfies the $\nabla_{2}$--condition. Then $A_{\alpha}^{\psi}\left(\mathbb{B}_{N}\right)$ can be isometrically identified with $\left(AM_{\alpha}^{\psi}\left(\mathbb{B}_{N}\right)\right)^{**}$. Moreover, $C_{\phi}$ is equal to the biadjoint of $C_{\phi}|_{AM_{\alpha}^{\psi}\left(\mathbb{B}_{N}\right)}$.
\end{enumerate}
\end{theorem}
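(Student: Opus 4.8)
The four assertions are the standard Banach-space structure theory of Orlicz spaces transported to their holomorphic subspaces, and the plan is to reduce each to the abstract Orlicz duality together with the dilation trick $f_r(z)=f(rz)$. For (1), first note $H^\infty\subset AM_\alpha^\psi$, since a bounded $g$ satisfies $\int_{\B_N}\psi(|g|/C)\,dv_\alpha<\infty$ for \emph{every} $C>0$ because $\psi$ is finite-valued; as $AM_\alpha^\psi$ is closed it contains $\overline{H^\infty}$, so the content is the reverse inclusion. Given $f\in AM_\alpha^\psi$ I would prove $\|f-f_r\|_{A_\alpha^\psi}\to 0$ as $r\to 1$. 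The dilates $f_r$ are bounded and norm-bounded (indeed $\|f_r\|_{A_\alpha^\psi}\le\|f\|_{A_\alpha^\psi}$, by subharmonicity of $\psi(|f|/C)$ and the radial monotonicity exploited in Lemma \ref{lem|lemme-alpha-tend-moins-1}), so the real point is that the membership $f\in M_\alpha^\psi$ makes the modular $\int\psi(|f|/C)\,dv_\alpha$ finite for all $C$, i.e. absolutely continuous, which upgrades uniform-on-compacta, modularly dominated convergence to Luxemburg-norm convergence. Polynomial density then follows by approximating each $f_r$, holomorphic near $\overline{\B_N}$, by the Ces\`aro means of its Taylor expansion.

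For (2), the ambient weak-star topology is $\sigma(L_\alpha^\psi,M_\alpha^\Phi)$, where $\Phi$ is the complementary function and $L_\alpha^\psi=(M_\alpha^\Phi)^*$ is the usual Orlicz predual. Point evaluation $f\mapsto f(z_0)$ is represented by integration against a reproducing kernel which, for fixed $z_0\in\B_N$, is bounded and hence lies in $M_\alpha^\Phi$; thus weak-star convergence forces pointwise convergence, and together with the local uniform boundedness of any norm-bounded family (via the embedding $A_\alpha^\psi\hookrightarrow H_v^\infty$) and Montel's theorem this yields uniform convergence on compacta. For the converse, on a bounded set I would test the candidate limit against $g\in M_\alpha^\Phi$, split $\B_N$ into a compact core and a thin boundary shell, use uniform convergence on the core and the absolute continuity of the modular of $g$ (precisely $g\in M^\Phi$) to kill the shell contribution; separability of $M_\alpha^\Phi$ makes the ball of $L_\alpha^\psi$ weak-star metrizable, so sequences suffice. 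I expect this tail estimate — turning boundedness plus local convergence into a genuine weak-star statement using the absolute continuity of the predual element — to be the main technical obstacle, since it is what feeds (3) and (4) as well.

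For (3), I would invoke the Krein--Smulian theorem, reducing weak-star closedness of the convex set $A_\alpha^\psi$ to closedness of $A_\alpha^\psi\cap\{\|\cdot\|_{A_\alpha^\psi}\le R\}$ inside each ball. On a ball the weak-star topology coincides with uniform convergence on compacta by the argument of (2) applied to the full space $A_\alpha^\psi$; hence a bounded weak-star convergent net converges uniformly on compacta to a \emph{holomorphic} limit whose $A_\alpha^\psi$-norm is $\le R$ by lower semicontinuity of the Luxemburg norm, so the limit lies in $A_\alpha^\psi$. Krein--Smulian then promotes this to global weak-star closedness.

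For (4), assume $\psi\in\nabla_2$, i.e. $\Phi\in\Delta_2$, so that $M_\alpha^\Phi=L_\alpha^\Phi$, $(M_\alpha^\psi)^*=L_\alpha^\Phi$ and $(M_\alpha^\psi)^{**}=L_\alpha^\psi$. The bidual of the subspace $AM_\alpha^\psi$ is, canonically, its weak-star closure inside $L_\alpha^\psi$. By (3) this closure sits inside $A_\alpha^\psi$; conversely each $f\in A_\alpha^\psi$ is the weak-star limit of its dilates $f_r\in H^\infty\subset AM_\alpha^\psi$, which are norm-bounded and converge uniformly on compacta, hence weak-star, by the coincidence of topologies on balls. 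Thus the closure is exactly $A_\alpha^\psi$, and since the canonical embedding into the bidual is isometric and the bidual norm is the $L_\alpha^\psi$-norm restricted, $(AM_\alpha^\psi)^{**}=A_\alpha^\psi$ isometrically. Finally, if $C_\phi$ is bounded then $C_\phi(H^\infty)\subseteq H^\infty$ gives $C_\phi(AM_\alpha^\psi)\subseteq AM_\alpha^\psi$; the biadjoint $(C_\phi|_{AM_\alpha^\psi})^{**}$ is a weak-star continuous operator on $A_\alpha^\psi$ that agrees with $C_\phi$ on the weak-star dense subspace $AM_\alpha^\psi$, and since $C_\phi$ is itself weak-star continuous on $A_\alpha^\psi$ (its continuity on bounded sets for the compact-open topology, i.e. the weak-star topology, upgrades by Krein--Smulian), the two coincide.
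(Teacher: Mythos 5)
The paper does not prove this theorem; it imports it from \cite{charpentier_composition_2011,charpentier_composition_2013}, and your argument reconstructs essentially the standard proofs given there: the dilation $f\mapsto f_r$ together with absolute continuity of the modular for (1), kernel representation of point evaluations, Montel's theorem and a compact-core/boundary-shell splitting for (2), Krein--Smulian for (3), and the identification of $\left(AM_{\alpha}^{\psi}\right)^{**}$ with the weak-star closure $\left(AM_{\alpha}^{\psi}\right)^{\perp\perp}$ inside $\left(M_{\alpha}^{\psi}\right)^{**}=L_{\alpha}^{\psi}$ for (4). The points you flag as delicate are handled correctly (uniform integrability of $\{\psi(|f_r|/C)\}_r$ via subharmonicity and radial monotonicity of spherical means; weak-star continuity of $C_{\phi}$ upgraded from its behaviour on balls), so the proposal is sound and takes the same route as the cited sources.
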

Note that when $\psi(x)=x^{p}$, $1\leq p<+\infty$, the previous
theorem reminds that the classical Hardy and Bergman spaces are separable
and reflexive, a situation which no longer occurs when
the Orlicz function does not satisfy the $\Delta_2$--condition. Nevertheless it also points out some
similarities with Bloch-type spaces or weighted Banach spaces, and more generally with $M$--ideals \cite{perfekt2017}.

\medskip{}

We finish this paragraph by recalling sharp estimates for the point evaluation functionals on $A_{\alpha}^{\psi}\left(\mathbb{B}_{N}\right)$, $\alpha\geq-1$.
\begin{proposition}[\cite{charpentier_composition_2011,charpentier_composition_2013}]
\label{prop|point-evaluation-bounded-prop}Let $\alpha\geq-1$ and let $\psi$ be an Orlicz function. For any $a\in \B_N$, the point evaluation functional $\delta_{a}$ at $a$ is bounded on $A_{\alpha}^{\psi}\left(\mathbb{B}_{N}\right)$
and we have
\[
\frac{1}{4^{N+\alpha+1}}\psi^{-1}\left(\left(\frac{1+\left|a\right|}{1-\left|a\right|}\right)^{N+\alpha+1}\right)\leq\left\Vert \delta_{a}\right\Vert _{\left(A_{\alpha}^{\psi}\right)^{*}}=\left\Vert \delta_{a}\right\Vert _{\left(AM_{\alpha}^{\psi}\right)^{*}}\leq\psi^{-1}\left(\left(\frac{1+\left|a\right|}{1-\left|a\right|}\right)^{N+\alpha+1}\right).
\]

\medskip{}

\end{proposition}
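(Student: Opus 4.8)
The plan is to establish the two inequalities separately and then to identify the two functional norms. Throughout I set $\beta=N+1+\alpha$ and, for $a\in\mathbb{B}_N$, I let $\varphi_a$ denote the involutive automorphism of $\mathbb{B}_N$ that exchanges $0$ and $a$. I first treat the Bergman case $\alpha>-1$; the Hardy case $\alpha=-1$ will follow along the same lines with $v_\alpha$ replaced by $\sigma_N$ (or, alternatively, by letting $\alpha\to-1$ through Lemma \ref{lem|lemme-alpha-tend-moins-1}).

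\emph{Upper bound.} Fix $f\in A_\alpha^\psi(\mathbb{B}_N)$ with $\|f\|_{A_\alpha^\psi}\le1$, so that $\int_{\mathbb{B}_N}\psi(|f|)\,dv_\alpha\le1$. Since $v_\alpha$ is a radial probability measure, the mean value property applied to the holomorphic function $f\circ\varphi_a$ gives $f(a)=(f\circ\varphi_a)(0)=\int_{\mathbb{B}_N}f\circ\varphi_a\,dv_\alpha$, the integral being meaningful because $\int\psi(|f\circ\varphi_a|)\,dv_\alpha<\infty$ and $\psi$ is superlinear. As $\psi$ is convex, Jensen's inequality yields $\psi(|f(a)|)\le\int_{\mathbb{B}_N}\psi(|f\circ\varphi_a|)\,dv_\alpha$. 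I would then change variables through $\varphi_a$, using the standard identities $1-|\varphi_a(w)|^2=\frac{(1-|a|^2)(1-|w|^2)}{|1-\langle w,a\rangle|^2}$ and $(J_{\mathbb{R}}\varphi_a)(w)=\big(\frac{1-|a|^2}{|1-\langle w,a\rangle|^2}\big)^{N+1}$, to obtain
\[
\int_{\mathbb{B}_N}\psi(|f\circ\varphi_a|)\,dv_\alpha=\int_{\mathbb{B}_N}\psi(|f(w)|)\,\frac{(1-|a|^2)^{\beta}}{|1-\langle w,a\rangle|^{2\beta}}\,dv_\alpha(w).
\]
Bounding the kernel by $|1-\langle w,a\rangle|\ge1-|a|$ produces the factor $\big(\frac{1+|a|}{1-|a|}\big)^{\beta}$ in front of $\int\psi(|f|)\,dv_\alpha\le1$, whence $|f(a)|\le\psi^{-1}\big((\tfrac{1+|a|}{1-|a|})^{\beta}\big)$. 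Taking the supremum over the unit ball gives the upper estimate.

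\emph{Lower bound.} Here I would test against a bounded multiple of a \emph{normalized} Bergman-kernel power. Set $g_a(z)=\big(\frac{1-|a|}{1-\langle z,a\rangle}\big)^{s}$ for a fixed $s>\beta$; then $|g_a|\le1$ on $\mathbb{B}_N$ (again because $|1-\langle z,a\rangle|\ge1-|a|$), $g_a(a)=(1+|a|)^{-s}$, and $g_a$ is bounded, hence lies in $AM_\alpha^\psi(\mathbb{B}_N)$. For $f_a=A\,g_a$ with $A>0$ to be chosen, convexity gives $\psi(A|g_a|)\le|g_a|\,\psi(A)$ since $|g_a|\le1$, so $\int\psi(|f_a|)\,dv_\alpha\le\psi(A)\int|g_a|\,dv_\alpha$. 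The standard estimate $\int_{\mathbb{B}_N}|1-\langle z,a\rangle|^{-s}\,dv_\alpha(z)\lesssim(1-|a|^2)^{\beta-s}$ then bounds $\int|g_a|\,dv_\alpha$ by a constant multiple of $(1-|a|^2)^{\beta}$; choosing $A=\psi^{-1}\big(c\,(1-|a|^2)^{-\beta}\big)$, with $c$ the reciprocal of that constant, forces $\|f_a\|_{A_\alpha^\psi}\le1$, and therefore $\|\delta_a\|\ge|f_a(a)|=(1+|a|)^{-s}\psi^{-1}\big(c\,(1-|a|^2)^{-\beta}\big)$. Finally, since $\psi^{-1}$ is concave and vanishes at $0$ it satisfies $\psi^{-1}(\lambda t)\ge\lambda\,\psi^{-1}(t)$ for $\lambda\in[0,1]$; combined with the elementary comparison $(1-|a|^2)^{-\beta}\ge4^{-\beta}\big(\frac{1+|a|}{1-|a|}\big)^{\beta}$, valid because $(1+|a|)^2\le4$, this converts the previous inequality into the stated bound with constant $4^{-\beta}$. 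The delicate point is exactly this Luxemburg-norm estimate for an arbitrary $\psi$: with no $\Delta_2$-type growth control available, one cannot tolerate any region where $|f_a|$ exceeds the level $A$ governing its value at $a$, which is why the kernel must be normalized to have modulus at most $1$ and why only the one-sided convexity inequality $\psi(\lambda x)\le\lambda\psi(x)$ is used; carrying the multiplicative constants through the integral estimate so as to land on precisely $4^{-\beta}$ is the main bookkeeping effort.

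\emph{Identification of the norms.} As $AM_\alpha^\psi(\mathbb{B}_N)\subseteq A_\alpha^\psi(\mathbb{B}_N)$ isometrically, one has $\|\delta_a\|_{(AM_\alpha^\psi)^*}\le\|\delta_a\|_{(A_\alpha^\psi)^*}$. For the reverse inequality I would use dilations: for $f$ in the unit ball of $A_\alpha^\psi$ and $0<\rho<1$, the function $f_\rho(z)=f(\rho z)$ belongs to $H^\infty\subseteq AM_\alpha^\psi$, satisfies $\|f_\rho\|_{A_\alpha^\psi}\le\|f\|_{A_\alpha^\psi}\le1$ by subharmonicity of $\psi(|f|)$, and $f_\rho(a)=f(\rho a)\to f(a)$ as $\rho\to1$; hence $\|\delta_a\|_{(AM_\alpha^\psi)^*}\ge|f(a)|$, and taking the supremum over $f$ gives the equality. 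Note that the lower-bound test functions $f_a$ above already lie in $AM_\alpha^\psi$, which is consistent with this identification.
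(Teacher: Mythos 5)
Your argument is sound in substance, and it is worth noting that the paper itself does not prove this proposition: it is imported verbatim from \cite{charpentier_composition_2011,charpentier_composition_2013}, so the only internal point of comparison is the pair of test functions $f_a$ and $g_a$ introduced just before Corollary \ref{cor|corollaire_Berezin_cont_comp}. Your upper bound (Jensen plus the change of variables through $\varphi_a$, with the kernel bounded by $\left(\frac{1+|a|}{1-|a|}\right)^{N+\alpha+1}$) is exactly the standard route, and your identification of $\left\Vert \delta_a\right\Vert_{(A_\alpha^\psi)^*}$ with $\left\Vert \delta_a\right\Vert_{(AM_\alpha^\psi)^*}$ via dilations and the monotonicity of spherical means of $\psi(|f|/C)$ is correct.

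The one place where you fall short of the statement as written is the constant $4^{-(N+\alpha+1)}$ in the lower bound. With a generic exponent $s>N+\alpha+1$ you are forced to invoke the estimate $\int_{\mathbb{B}_N}|1-\langle z,a\rangle|^{-s}\,dv_\alpha\lesssim (1-|a|^2)^{N+\alpha+1-s}$, whose implied constant contaminates your normalization $A$; after the concavity step $\psi^{-1}(\lambda t)\geq\lambda\psi^{-1}(t)$ you land on a lower bound of the form $c_{s,N,\alpha}\,\psi^{-1}\bigl(\bigl(\frac{1+|a|}{1-|a|}\bigr)^{N+\alpha+1}\bigr)$ with an unspecified $c_{s,N,\alpha}$, not $4^{-(N+\alpha+1)}$. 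This costs nothing for any application in the paper (only the existence of an absolute constant is ever used, e.g.\ in Lemma \ref{lem|weight-essential-ball-spec-case}), but the ``bookkeeping'' you defer is closed by a specific choice rather than by effort: take $s=2(N+\alpha+1)$, so that $|g_a|=\frac{(1-|a|)^{2(N+\alpha+1)}}{(1-|a|^2)^{N+\alpha+1}}H_a$ with $H_a$ the Berezin kernel satisfying the exact identity $\int_{\mathbb{B}_N}H_a\,dv_\alpha=1$ (it is the Jacobian of $\varphi_a$ relative to $v_\alpha$). Then $\int|g_a|\,dv_\alpha=\bigl(\frac{1-|a|}{1+|a|}\bigr)^{N+\alpha+1}$ exactly, the correct normalization is $A=\psi^{-1}\bigl(\bigl(\frac{1+|a|}{1-|a|}\bigr)^{N+\alpha+1}\bigr)$ with no extraneous constant, and $Ag_a(a)=A(1+|a|)^{-2(N+\alpha+1)}\geq 4^{-(N+\alpha+1)}A$ gives precisely the stated bound. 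This is what the functions $f_a$ and $g_a$ of the paper accomplish.
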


\subsection{A first characterization of small weighted Bergman-Orlicz spaces}\label{first-carac-parag}

We start with the following observation.
\begin{lemma}
\label{lem|lemma-base-BO-equal-Growth}Let $\alpha>-1$ and $\psi$
be an Orlicz function satisfying the $\Delta^{2}$--condition. Then
the function $\psi^{-1}\left(\frac{1}{\left(1-|z|\right)^{a}}\right)$
belongs to $L^{\psi}\left(\mathbb{B}_{N},v_{\alpha}\right)$ for every
$a>0$.\end{lemma}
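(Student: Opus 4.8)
The plan is to exhibit an explicit dilation constant $C>0$ for which $\int_{\B_N}\psi\bigl(g/C\bigr)\,dv_\alpha<\infty$, where $g(z)=\psi^{-1}\bigl((1-|z|)^{-a}\bigr)$; by definition of the Orlicz space this is exactly what membership $g\in L^\psi(\B_N,v_\alpha)$ requires. The heart of the matter is to exploit the full flexibility of the $\Delta^{2}$--condition, as recorded in Proposition \ref{prop|prop-Orlicz-growth}(3): not merely the defining inequality $\psi(x)^{2}\leq\psi(Cx)$, but its consequence that for \emph{every} $b>1$ there is a constant $C=C(b)>0$ with $\psi(x)^{b}\leq\psi(Cx)$ for all $x$ large enough. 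Rewriting this (substitute $x\to x/C$) as $\psi(x/C)\leq\psi(x)^{1/b}$, valid for $x$ large, and putting $x=g(z)$---which tends to $+\infty$ as $|z|\to 1$ since $\psi^{-1}$ is increasing to infinity---yields the pointwise bound
\[
\psi\!\left(\frac{g(z)}{C}\right)\leq \psi\bigl(g(z)\bigr)^{1/b}=\frac{1}{(1-|z|)^{a/b}}
\]
on a neighbourhood of the sphere.

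The point is then that raising to the power $1/b$ with $b$ large makes the boundary singularity as mild as we please, and in particular $v_\alpha$--integrable. Concretely I would fix $b>\max\{1,\,a/(\alpha+1)\}$, which is possible since $\alpha+1>0$, and let $C=C(b)$ be the associated $\Delta^{2}$--constant. Because $g$ is large exactly when $|z|$ is close to $1$, there is $\rho\in(0,1)$ such that the displayed inequality holds for $\rho\leq|z|<1$.

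I would then split $\int_{\B_N}\psi(g/C)\,dv_\alpha$ over $\{|z|\leq\rho\}$ and $\{\rho\leq|z|<1\}$. On the compact ball $\{|z|\leq\rho\}$ the integrand is continuous, hence bounded, and $v_\alpha$ is a finite measure, so this contribution is finite. On the outer annulus the pointwise bound, together with $(1-|z|^{2})^{\alpha}\approx(1-|z|)^{\alpha}$ near the sphere, reduces the integral---after passing to polar coordinates $dv(z)=2N\,r^{2N-1}\,dr\,d\sigma_N$---to a constant multiple of
\[
\int_\rho^1 (1-r)^{\alpha-a/b}\,r^{2N-1}\,dr,
\]
which converges precisely because our choice of $b$ guarantees $\alpha-a/b>-1$. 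Adding the two pieces gives $g/C\in L^\psi(\B_N,v_\alpha)$, i.e. $g\in L^\psi(\B_N,v_\alpha)$.

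The only genuine subtlety, and the step I would flag as the crux, is the realization that the bare $\Delta^{2}$--inequality (exponent $2$) is insufficient when $a\geq 2(\alpha+1)$: the naive bound $(1-|z|)^{-a/2}$ need not be $v_\alpha$--integrable. Choosing the exponent $b$ \emph{after} seeing $a$ and $\alpha$---trading a larger dilation constant $C$ for a smaller boundary exponent $a/b$---is exactly what the ``all $b>1$'' form of the condition buys us, and it is what makes the statement hold for \emph{every} $a>0$.
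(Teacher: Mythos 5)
Your proposal is correct and follows essentially the same route as the paper's own proof: both invoke the ``all $b>1$'' form of the $\Delta^{2}$--condition from Proposition \ref{prop|prop-Orlicz-growth}(3) to get the pointwise bound $\psi\left(g(z)/C\right)\leq(1-|z|)^{-a/b}$ near the sphere, and then choose $b$ large enough that $\alpha-a/b>-1$ to conclude via integrability of $(1-|z|)^{\beta}$ against $v_{\alpha}$. The paper merely phrases the substitution slightly differently (setting $y=\psi(x)^{b}$ rather than $x\to x/C$) and folds the split over $\{|z|\leq\rho\}$ into the phrase ``for $|z|$ close enough to $1$''; your identification of the crux — that the bare exponent $2$ is insufficient for large $a$ — matches the role the lemma plays in the paper.
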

\begin{proof}
We recall that $\left(1-|z|\right)^{\beta}$ belongs to $L^{1}\left(\mathbb{B}_{N},v\right)$
if and only if $\beta>-1$. Then
it is enough to show that for any $a>0$, there exists some constant
$C>0$ and some $\beta>-1$ such that
\[
\psi\left(C\psi^{-1}\left(\frac{1}{\left(1-|z|\right)^{a}}\right)\right)\left(1-|z|\right)^{\alpha}\leq\left(1-|z|\right)^{\beta}
\]
for every $\left|z\right|$ close enough to $1$. Since $\psi$
satisfies the $\Delta^{2}$--condition, Proposition \ref{prop|prop-Orlicz-growth}
(3) ensures that for any $b>1$ there exists $C>0$ such
that $\psi\left(x\right)^{b}\leq \psi\left(Cx\right)$ for
any $x$ large enough. Using that $\psi$ is increasing, the last
inequality becomes $\psi\left(C^{-1}\psi^{-1}\left(\psi(x)^{b}\right)\right)\leq\psi(x)$,
and setting $y=\left(\psi(x)\right)^{b}$ gives $\psi\left(C^{-1}\psi^{-1}(y)\right)\leq y^{1/b}$
for any $y$ large enough. Thus, replacing $y$ with $1/\left(1-|z|\right)^{a}$,
we obtain
\[
\psi\left(C^{-1}\psi^{-1}\left(\frac{1}{\left(1-|z|\right)^{a}}\right)\right)\left(1-|z|\right)^{\alpha}\leq \left(1-|z|\right)^{\alpha-a/b}
\]
for every $\left|z\right|$ close enough to $1$. The lemma follows
by choosing $b$ large enough in order that $\alpha-a/b>-1$ (possible for $\alpha >-1$).
\end{proof}
Let us give another easy lemma.
\begin{lemma}
\label{lem|lemma-tech-charac-BO}Let $\psi$ be an Orlicz function
satisfying the $\Delta^{2}$--condition. For every $a\geq1$, there exists
a constant $C>0$ such that $\psi^{-1}\left(y\right)\leq\psi^{-1}\left(y^{a}\right)\leq C\psi^{-1}\left(y\right)$
for any $y$ large enough.\end{lemma}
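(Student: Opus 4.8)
The plan is to reduce everything to the characterization of the $\Delta^{2}$--class recorded in Proposition \ref{prop|prop-Orlicz-growth}(3). The left-hand inequality $\psi^{-1}(y)\leq\psi^{-1}(y^{a})$ requires no growth hypothesis at all: since $\psi$ is increasing, its generalized inverse $\psi^{-1}$ is nondecreasing, and for $a\geq 1$ and $y\geq 1$ one has $y^{a}\geq y$, so that $\psi^{-1}(y^{a})\geq\psi^{-1}(y)$. Hence the only substantive content is the upper bound $\psi^{-1}(y^{a})\leq C\psi^{-1}(y)$.

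For that bound I would first dispose of the trivial case $a=1$, where one may simply take $C=1$, and then assume $a>1$. The idea is that the exponent $a$ in the conclusion should be matched with the exponent $b$ in the $\Delta^{2}$--characterization. Concretely, I would apply Proposition \ref{prop|prop-Orlicz-growth}(3) with $b=a$: the $\Delta^{2}$--condition provides a constant $C>0$ such that $\psi(x)^{a}\leq\psi(Cx)$ for every $x$ large enough.

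The key step is then the substitution $x=\psi^{-1}(y)$. Since $\psi(x)/x\to+\infty$, the inverse $\psi^{-1}$ is defined on a neighbourhood of $+\infty$ and satisfies $\psi^{-1}(y)\to+\infty$ as $y\to+\infty$; consequently, for $y$ large enough the value $x=\psi^{-1}(y)$ is large enough for the previous inequality to apply. Writing $y=\psi(x)$, the inequality $\psi(x)^{a}\leq\psi(Cx)$ becomes $y^{a}\leq\psi\bigl(C\psi^{-1}(y)\bigr)$, and applying the nondecreasing map $\psi^{-1}$ to both sides yields $\psi^{-1}(y^{a})\leq C\psi^{-1}(y)$, which is exactly the desired estimate.

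I do not expect any serious obstacle here: the argument is a direct translation of the $\Delta^{2}$--characterization through the inverse function. The only points requiring a little care are the bookkeeping of the ``for $x$ (resp.\ $y$) large enough'' quantifiers as one passes from $x$ to $y=\psi(x)$, and the standard fact that $\psi^{-1}$ is a well-defined nondecreasing right inverse of $\psi$, even though an Orlicz function is only \emph{eventually} strictly increasing; both are routine.
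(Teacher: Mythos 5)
Your proof is correct and follows essentially the same route as the paper: both reduce the upper bound to Proposition \ref{prop|prop-Orlicz-growth}(3) and transfer it through the substitution $y=\psi(x)$. The only cosmetic difference is that you apply the proposition with $b=a$ directly (treating $a=1$ separately), whereas the paper takes $b>a$, obtains $\psi^{-1}(y^{a})\leq C\psi^{-1}(y^{a/b})$, and uses the monotonicity of $\psi^{-1}$ once more to absorb the exponent $a/b<1$.
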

\begin{proof}
The first inequality is valid for any $y\geq1$ since $\psi^{-1}$
is increasing and $a\geq 1$. For the second inequality, for $b>1$ let $C>0$
be given by Proposition \ref{prop|prop-Orlicz-growth} (3). We have
$\psi(x)^{b}\leq \psi\left(Cx\right)$ for any $x$ large
enough, hence setting $y=\left(\psi(x)\right)^{b/a}$ as
in the previous proof and composing the last inequality by the increasing
function $\psi^{-1}$, we get $\psi^{-1}(y^{a})\leq C\psi^{-1}\left(y^{a/b}\right)$
for any $y$ large enough. To finish we choose
$b$ so that $a/b$ is smaller than $1$ and we use again that $\psi^{-1}$
is increasing.
\end{proof}

\medskip{}

For any Orlicz function $\psi$ and and $\gamma \geq 1$, let us define the following typical radial weight:
\begin{equation}\label{weightpsi}
w^{\psi,\gamma}(z)=\left[\psi^{-1}\left(\frac{1}{\left(1-|z|\right)^{\gamma}}\right)\right]^{-1},\, z\in \B_N.
\end{equation}
When $\gamma =1$, we will denote $w^{\psi}=w^{\psi,\gamma}$. Proposition \ref{prop|point-evaluation-bounded-prop} tells that $A_{\alpha}^{\psi}(\B_N)\subset H_{w^{\psi,N+\alpha+1}}^{\infty}(\B_N)$ for every $\psi$ and every $\alpha \geq -1$. Moreover, Lemma \ref{lem|lemma-base-BO-equal-Growth} ensures that if $\psi \in \Delta ^2$ and $\alpha > -1$, then any $f$ in $H_{w^{\psi,N+\alpha+1}}^{\infty}\left(\mathbb{B}_{N}\right)$ lies also in $A_{\alpha}^{\psi}\left(\mathbb{B}_{N},v_{\alpha}\right)$. We deduce that, whenever $\psi \in \Delta ^2$ and $\alpha > -1$, $A_{\alpha}^{\psi}\left(\mathbb{B}_{N},v_{\alpha}\right)= H_{w^{\psi,N+\alpha+1}}^{\infty}\left(\mathbb{B}_{N}\right)$ and finally by Lemma \ref{lem|lemma-tech-charac-BO} that $A_{\alpha}^{\psi}\left(\mathbb{B}_{N},v_{\alpha}\right)= H_{w^{\psi}}^{\infty}\left(\mathbb{B}_{N}\right)$ (see \eqref{growth-space} in the introduction for the definition of the weighted Banach space $H_v^{\infty}(\B_N)$). Actually
the following theorem tells us a bit more.

Let us recall first that the \emph{vanishing weighted Banach space} -or \emph{vanishing
growth space}- associated with $v$ is defined by
\[
H_{v}^{0}:=\left\{ f\in H\left(\mathbb{B}_{N}\right),\,\lim_{|z|\rightarrow1}\left|f(z)\right|v(z)=0\right\} ,
\]
and, endowed with the norm $\left\Vert \cdot\right\Vert _{v}$, is a closed subspace of $H_{v}^{\infty}$.
\begin{theorem}
\label{thm|THM1}Let $\alpha>-1$ and let $\psi$ be an Orlicz function
satisfying the $\Delta^{2}$--condition. Then the weighted Bergman-Orlicz
space $A_{\alpha}^{\psi}\left(\mathbb{B}_{N}\right)$ coincides with
the growth space $H_{w^{\psi}}^{\infty}\left(\mathbb{B}_{N}\right)$,
with an equivalent norm. Moreover $AM_{\alpha}^{\psi}\left(\mathbb{B}_{N}\right)=H_{w^{\psi}}^{0}\left(\mathbb{B}_{N}\right)$.\end{theorem}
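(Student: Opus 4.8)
The set equality $A_{\alpha}^{\psi}(\B_N)=H_{w^{\psi}}^{\infty}(\B_N)$ having been recorded just above, the first task is to upgrade it to an equivalence of norms, which I would do by exhibiting the two continuous inclusions separately. For $f\in H_{w^{\psi}}^{\infty}(\B_N)$ one has the pointwise domination $|f(z)|\le\|f\|_{w^{\psi}}/w^{\psi}(z)=\|f\|_{w^{\psi}}\,\psi^{-1}\big(1/(1-|z|)\big)$; since the dominating function $z\mapsto\psi^{-1}\big(1/(1-|z|)\big)$ lies in $L^{\psi}(\B_N,v_{\alpha})$ by Lemma \ref{lem|lemma-base-BO-equal-Growth} (with $a=1$), the ideal property of the Luxemburg norm yields $\|f\|_{A_{\alpha}^{\psi}}\le\big\|\psi^{-1}(1/(1-|\cdot|))\big\|_{L_{\alpha}^{\psi}}\,\|f\|_{w^{\psi}}$. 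Conversely, for $f\in A_{\alpha}^{\psi}(\B_N)$ the point-evaluation estimate of Proposition \ref{prop|point-evaluation-bounded-prop} gives $|f(z)|\le\psi^{-1}\big((\tfrac{1+|z|}{1-|z|})^{N+\alpha+1}\big)\|f\|_{A_{\alpha}^{\psi}}$; multiplying by $w^{\psi}(z)$ and using $\tfrac{1+|z|}{1-|z|}\le\tfrac{2}{1-|z|}$ together with Lemma \ref{lem|lemma-tech-charac-BO} (to absorb the exponent $N+\alpha+1$ and the constant into $\psi^{-1}(1/(1-|z|))$) bounds the resulting ratio uniformly near the boundary, whence $\|f\|_{w^{\psi}}\lesssim\|f\|_{A_{\alpha}^{\psi}}$. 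Alternatively, both being Banach spaces continuously embedded into $H(\B_N)$, the identity map has closed graph and the open mapping theorem already forces comparability.

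For the second statement I would prove the two inclusions defining $AM_{\alpha}^{\psi}(\B_N)=H_{w^{\psi}}^{0}(\B_N)$ by hand. The inclusion $H_{w^{\psi}}^{0}\subseteq AM_{\alpha}^{\psi}$ is the soft one: writing $|f(z)|w^{\psi}(z)=\varepsilon(z)\to0$ as $|z|\to1$, fix any prescribed $C>0$ and choose $\eta$ so small that $\varepsilon(z)\le C/C_{0}$ for $1-|z|<\eta$, where $C_{0}$ is the constant with $\int_{\B_N}\psi\big(\psi^{-1}(1/(1-|z|))/C_{0}\big)\,dv_{\alpha}<\infty$ furnished by Lemma \ref{lem|lemma-base-BO-equal-Growth}. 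On this shell $\psi(|f|/C)\le\psi\big(\psi^{-1}(1/(1-|\cdot|))/C_{0}\big)$ pointwise, so its integral is finite, and on the complementary compact set $f$ is bounded. As $C>0$ was arbitrary, this places $f$ in $M^{\psi}(\B_N,v_{\alpha})$, i.e.\ in $AM_{\alpha}^{\psi}$.

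The reverse inclusion $AM_{\alpha}^{\psi}\subseteq H_{w^{\psi}}^{0}$ is where the real work lies, and I expect it to be the main obstacle. Given $f\in M^{\psi}(\B_N,v_{\alpha})$ and $C>0$, the function $\psi(|f|/C)$ is plurisubharmonic, so the sub-mean value inequality over the Euclidean ball $B(a,\rho)$ with $\rho=(1-|a|)/2$, combined with $(1-|z|)^{\alpha}\approx(1-|a|)^{\alpha}$ and $v(B(a,\rho))\approx(1-|a|)^{2N}$ on that ball, should produce a fixed constant $K=K(N,\alpha)$ with
\[
\psi\!\left(\frac{|f(a)|}{C}\right)\le K\,(1-|a|)^{-(2N+\alpha)}\,I_{C}(a),\qquad I_{C}(a):=\int_{B(a,\rho)}\psi\!\left(\frac{|f|}{C}\right)dv_{\alpha}.
\]
Since the balls $B(a,\rho)$ collapse into $\partial\B_N$ as $|a|\to1$ while $\psi(|f|/C)\,dv_{\alpha}$ is a finite measure, $I_{C}(a)\to0$; hence for $|a|$ close enough to $1$ the argument of $\psi^{-1}$ is at most $(1-|a|)^{-(2N+\alpha)}$, and as $2N+\alpha>1$ Lemma \ref{lem|lemma-tech-charac-BO} gives $\psi^{-1}\big((1-|a|)^{-(2N+\alpha)}\big)\le C''\,\psi^{-1}\big(1/(1-|a|)\big)=C''/w^{\psi}(a)$, with $C''$ independent of $C$. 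This yields $\limsup_{|a|\to1}|f(a)|w^{\psi}(a)\le C\,C''$. The decisive point is that $C>0$ is at our disposal precisely because $f\in M^{\psi}$ makes the integral finite for \emph{every} $C$; letting $C\to0$ forces $|f(a)|w^{\psi}(a)\to0$, that is $f\in H_{w^{\psi}}^{0}$, and closes the argument.
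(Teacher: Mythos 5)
Your proof of the norm equivalence is essentially the paper's own argument: the inclusion $\left\Vert \cdot\right\Vert _{A_{\alpha}^{\psi}}\lesssim\left\Vert \cdot\right\Vert _{w^{\psi}}$ via the integrable majorant $\psi^{-1}(1/(1-|z|))$ from Lemma \ref{lem|lemma-base-BO-equal-Growth} and the lattice property of the Luxemburg norm, and the reverse inequality via Proposition \ref{prop|point-evaluation-bounded-prop}, the concavity of $\psi^{-1}$ and Lemma \ref{lem|lemma-tech-charac-BO}, are exactly the two halves of the paper's proof. Where you genuinely diverge is the identity $AM_{\alpha}^{\psi}(\B_N)=H_{w^{\psi}}^{0}(\B_N)$. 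The paper disposes of it softly: once the two norms are equivalent the topologies agree, and both spaces are the closure of the polynomials for that common topology (Theorem \ref{thm|weak-star-topo-H-O}\,(1) on one side, the Shields--Williams result for $H_v^0$ on the other), so they coincide. You instead prove the two inclusions by hand, and both of your arguments are correct: the inclusion $H_{w^{\psi}}^{0}\subseteq AM_{\alpha}^{\psi}$ by splitting $\B_N$ into a boundary shell, where $\psi(|f|/C)$ is dominated by the integrable function of Lemma \ref{lem|lemma-base-BO-equal-Growth}, and a compact core; and the inclusion $AM_{\alpha}^{\psi}\subseteq H_{w^{\psi}}^{0}$ via the sub-mean value inequality for the plurisubharmonic function $\psi(|f|/C)$ on balls $B(a,(1-|a|)/2)$, the absolute continuity of $\psi(|f|/C)\,dv_{\alpha}$, and Lemma \ref{lem|lemma-tech-charac-BO} to collapse the exponent $2N+\alpha$ back to $1$ (note $2N+\alpha>1$, so the lemma applies, and your constant $C''$ is indeed independent of $C$, which is what lets $C\to0$ finish the job). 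Your route is longer but self-contained: it does not invoke the density of polynomials in $AM_{\alpha}^{\psi}$ nor the Shields--Williams description of $H_v^0$, and it makes transparent exactly where the $\Delta^2$--condition and the defining property of the Morse--Transue space (finiteness of the integral for \emph{every} $C>0$) enter. The paper's route buys brevity at the price of two external structural facts. One cosmetic remark: in the first part, absorbing the constant $2^{N+\alpha+1}$ inside $\psi^{-1}$ is most cleanly done by the concavity of $\psi^{-1}$ (as the paper notes) or by enlarging the exponent before applying Lemma \ref{lem|lemma-tech-charac-BO}; as literally stated that lemma only handles the exponent, not the multiplicative constant.
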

\begin{proof}
It remains to prove that the norms of $A_{\alpha}^{\psi}\left(\mathbb{B}_{N}\right)$
and $H_{w^{\psi}}^{\infty}\left(\mathbb{B}_{N}\right)$ are equivalent
and that $AM_{\alpha}^{\psi}\left(\mathbb{B}_{N}\right)=H_{w^{\psi}}^{0}\left(\mathbb{B}_{N}\right)$.
Let us assume for a while that the first assertion holds. Then the topologies
of $A_{\alpha}^{\psi}\left(\mathbb{B}_{N}\right)$ and $H_{w^{\psi}}^{\infty}\left(\mathbb{B}_{N}\right)$
are the same and it is known that $AM_{\alpha}^{\psi}\left(\mathbb{B}_{N}\right)$
and $H_{w^{\psi}}^{0}\left(\mathbb{B}_{N}\right)$ are the closure
of the set of polynomials for this topology (Theorem \ref{thm|weak-star-topo-H-O}
and \cite{shields_bounded_1978}), hence the second assertion.

Let us now prove that the norms of $A_{\alpha}^{\psi}\left(\mathbb{B}_{N}\right)$
and $H_{w^{\psi}}^{\infty}\left(\mathbb{B}_{N}\right)$ are indeed
equivalent. According to Proposition \ref{prop|point-evaluation-bounded-prop}
and Lemma \ref{lem|lemma-tech-charac-BO} we have
\[
|f(z)|\left[\psi^{-1}\left(\frac{1}{1-|z|}\right)\right]^{-1}\leq2^{N+\alpha+1}C\left\Vert f\right\Vert _{A_{\alpha}^{\psi}}
\]
for any $f\in A_{\alpha}^{\psi}\left(\mathbb{B}_{N}\right)$ and any
$z\in\mathbb{B}_{N}$ (note that we have used the concavity of $\psi^{-1}$),
where $C>0$ is the constant given by Lemma \ref{lem|lemma-tech-charac-BO}
and which only depends on $\psi$ and $\alpha$. This gives $\left\Vert \cdot\right\Vert _{w^{\psi}}\lesssim\left\Vert \cdot\right\Vert _{A_{\alpha}^{\psi}}$.
For the other inequality, we simply observe that for $f\in A_{\alpha}^{\psi}(\B_N)=H_{w^{\psi}}^{\infty}(\B_N)$,
\[
\frac{\left|f(z)\right|}{\left\Vert f\right\Vert _{w^{\psi}}}=\frac{\left|f(z)\right|}{{\displaystyle \sup_{z\in\mathbb{B}_{N}}\left|f(z)w^{\psi}(z)\right|}}\leq\psi^{-1}\left(\frac{1}{1-|z|}\right),\, z\in\mathbb{B}_{N}.
\]
By Lemma \ref{lem|lemma-base-BO-equal-Growth} $\psi^{-1}\left(\frac{1}{1-|z|}\right)$
is in $L^{\psi}\left(\mathbb{B}_{N},v_{\alpha}\right)$ so there exists
a constant $C'>0$ (depending only on $\psi$ and $\alpha$) such that
\[
\int_{\mathbb{B}_{N}}\psi\left(\frac{\left|f(z)\right|}{C'\left\Vert f\right\Vert _{w^{\psi}}}\right)dv_{\alpha}(z)\leq\int_{\mathbb{B}_{N}}\psi\left(\frac{1}{C'}\psi^{-1}\left(\frac{1}{1-|z|}\right)\right)dv_{\alpha}(z)\leq1.
\]
The expected inequality $\left\Vert f\right\Vert _{A_{\alpha}^{\psi}}\leq C'\left\Vert f\right\Vert _{w^{\psi}}$
follows.\end{proof}
\begin{unremark}
1) We will show in the next section that the $\Delta^{2}$--condition actually characterizes those Orlicz functions $\psi$ for which the associated weighted Bergman-Orlicz space $A_{\alpha}^{\psi}(\B_N)$ coincides with the weighted Banach space $H_{w^{\psi}}^{\infty}(\B_N)$. We will also see that this result does not hold for Hardy-Orlicz
spaces.

2) Note that, by Theorem \ref{thm|THM1}, $A_{\alpha}^{\psi}\left(\mathbb{B}_{N}\right)=A_{\beta}^{\psi}\left(\mathbb{B}_{N}\right)$
for any $\alpha,\beta>-1$ whenever the Orlicz function $\psi$ satisfies
the $\Delta^{2}$--condition; in other words, the corresponding weighted
Bergman-Orlicz spaces coincide with the non-weighted ones.

3) We shall recall Kaptano\u{g}lu's result \cite{kaptanoglu_carleson_2007} telling that $\mathcal{B}^{\gamma}\left(\mathbb{B}_{N}\right)=H_{v^{\gamma}}^{\infty}\left(\mathbb{B}_{N}\right)$
and $\mathcal{B}_{0}^{\gamma}\left(\mathbb{B}_{N}\right)=H_{v^{\gamma}}^{0}\left(\mathbb{B}_{N}\right)$
whenever $\gamma>0$, where $v(z)=1-|z|$ and $\mathcal{B}^{\gamma}\left(\mathbb{B}_{N}\right)$
stands for the weighted Bloch space. Theorem \ref{thm|THM1} completes
his description for the weighted Banach spaces with slow growth.
\end{unremark}
The following corollary immediately proceeds from Proposition \ref{prop|point-evaluation-bounded-prop}
and Lemma \ref{lem|lemma-base-BO-equal-Growth}.
\begin{corollary}
\label{cor|coro-evaluation-norm-Lpsi-Delta2}Let $\alpha>-1$ and
$\psi$ be an Orlicz function satisfying the $\Delta^{2}$--condition.
Then the function $z\mapsto\left\Vert \delta_{z}\right\Vert _{\left(A_{\alpha}^{\psi}\right)^{*}}$
belongs to $L^{\psi}\left(\mathbb{B}_{N},v_{\alpha}\right)$.
\end{corollary}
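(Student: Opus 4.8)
The plan is to combine the upper estimate of Proposition~\ref{prop|point-evaluation-bounded-prop} with the integrability granted by Lemma~\ref{lem|lemma-base-BO-equal-Growth}, and to conclude by the solidity (lattice property) of the Orlicz space. First I would note that $z\mapsto\|\delta_z\|_{(A_\alpha^\psi)^*}$ is a positive measurable function on $\B_N$: it is rotation-invariant, hence radial, and by Proposition~\ref{prop|point-evaluation-bounded-prop} it is squeezed between two increasing functions of $|z|$, so measurability is not an issue. The right-hand inequality of Proposition~\ref{prop|point-evaluation-bounded-prop} gives, for every $z\in\B_N$,
\[
\|\delta_z\|_{(A_\alpha^\psi)^*}\leq\psi^{-1}\left(\left(\frac{1+|z|}{1-|z|}\right)^{N+\alpha+1}\right).
\]

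The next step is to replace the factor $\frac{1+|z|}{1-|z|}$ by a clean negative power of $1-|z|$. Since $1+|z|\leq 2$, one has $\left(\frac{1+|z|}{1-|z|}\right)^{N+\alpha+1}\leq \frac{2^{N+\alpha+1}}{(1-|z|)^{N+\alpha+1}}$, and for $|z|$ close enough to $1$ (namely $1-|z|\leq 2^{-(N+\alpha+1)}$) this is in turn at most $(1-|z|)^{-(N+\alpha+2)}$. As $\psi^{-1}$ is increasing, it follows that
\[
\|\delta_z\|_{(A_\alpha^\psi)^*}\leq\psi^{-1}\left(\frac{1}{(1-|z|)^{N+\alpha+2}}\right)
\]
for all such $z$. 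Setting $a=N+\alpha+2>0$, Lemma~\ref{lem|lemma-base-BO-equal-Growth} tells us precisely that $z\mapsto\psi^{-1}\left(1/(1-|z|)^{a}\right)$ belongs to $L^\psi(\B_N,v_\alpha)$.

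Finally I would handle the region $\{1-|z|> 2^{-(N+\alpha+1)}\}$, which is relatively compact in $\B_N$ and on which $\|\delta_z\|_{(A_\alpha^\psi)^*}$ is bounded by some constant $M_0$; since $v_\alpha$ is a probability measure, constants lie in $L^\psi(\B_N,v_\alpha)$. Thus $\|\delta_z\|_{(A_\alpha^\psi)^*}\leq M_0+\psi^{-1}\left(1/(1-|z|)^{a}\right)$ on all of $\B_N$, and the right-hand side is in $L^\psi(\B_N,v_\alpha)$. Because $L^\psi(\B_N,v_\alpha)$ is solid — if $0\leq g\leq h$ pointwise with $h\in L^\psi$, then for a suitable $C>0$ one has $\int_{\B_N}\psi(g/C)\,dv_\alpha\leq\int_{\B_N}\psi(h/C)\,dv_\alpha<\infty$, so $g\in L^\psi$ — the conclusion follows. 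There is no genuine obstacle here; the only points requiring care are the bookkeeping that absorbs the constant $2^{N+\alpha+1}$ into the exponent of $1-|z|$ and the handling of the bounded central region, both of which are routine, which is why the statement follows immediately from the two quoted results.
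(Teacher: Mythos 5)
Your proof is correct and takes essentially the same route as the paper, which simply asserts that the corollary ``immediately proceeds'' from Proposition~\ref{prop|point-evaluation-bounded-prop} and Lemma~\ref{lem|lemma-base-BO-equal-Growth}. Your absorption of the constant $2^{N+\alpha+1}$ into a slightly larger exponent near the boundary (the paper elsewhere handles such constants via the concavity of $\psi^{-1}$ or Lemma~\ref{lem|lemma-tech-charac-BO}), together with the treatment of the central region and the solidity of $L^{\psi}$, is exactly the routine bookkeeping the paper suppresses.
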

The previous provides another characterization of weighted Bergman-Orlicz
spaces associated with $\Delta^{2}$ Orlicz functions.
\begin{corollary}
Let $\alpha>-1$ and let $\psi$ be an Orlicz function satisfying the $\Delta^{2}$--condition. $A_{\alpha}^{\psi}\left(\mathbb{B}_{N}\right)$
coincides with the set of all functions $f$ holomorphic on $\mathbb{B}_{N}$
such that there exists $A>0$ such that
\begin{equation}
c_{\alpha}\int_{0}^{1}\sup_{\zeta\in\mathbb{S}_{N}}\psi\left(\frac{\left|f(r\zeta)\right|}{A}\right)r^{2N-1}\left(1-r^{2}\right)^{\alpha}dr<+\infty.\label{eq|charac-BO-sup}
\end{equation}
Moreover, if we define $\left\Vert \cdot\right\Vert _{A_{\alpha,\infty}^{\psi}}$
as the infimum of those constants $A>0$ such that the left hand-side
of (\ref{eq|charac-BO-sup}) is less than or equal to $1$, then $\left\Vert \cdot\right\Vert _{A_{\alpha,\infty}^{\psi}}$
is a norm on $A_{\alpha}^{\psi}\left(\mathbb{B}_{N}\right)$ and we
have
\begin{equation}
\left\Vert f\right\Vert _{A_{\alpha}^{\psi}}\leq\left\Vert f\right\Vert _{A_{\alpha,\infty}^{\psi}}\leq K\left\Vert f\right\Vert _{A_{\alpha}^{\psi}},\label{eq|eq_normes-Delta-up-2}
\end{equation}
for any $f\in A_{\alpha}^{\psi}\left(\mathbb{B}_{N}\right)$, and
some constant $K>0$.\end{corollary}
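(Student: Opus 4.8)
The plan is to prove the two-sided norm estimate \eqref{eq|eq_normes-Delta-up-2} first; the set-theoretic identification is then automatic, since $f$ belongs to $A_{\alpha}^{\psi}(\B_N)$ precisely when $\|f\|_{A_{\alpha}^{\psi}}<\infty$ and to the class described by \eqref{eq|charac-BO-sup} precisely when $\|f\|_{A_{\alpha,\infty}^{\psi}}<\infty$. Throughout I abbreviate $\Phi_A(r)=\sup_{\zeta\in\S_N}\psi(|f(r\zeta)|/A)$ for the integrand appearing in \eqref{eq|charac-BO-sup}, and I use the polar decomposition $\int_{\B_N}g\,dv_{\alpha}=2Nc_{\alpha}\int_0^1\int_{\S_N}g(r\zeta)\,d\sigma_N(\zeta)\,r^{2N-1}(1-r^2)^{\alpha}\,dr$ recorded in the proof of Lemma \ref{lem|lemme-alpha-tend-moins-1}. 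The left-hand inequality in \eqref{eq|eq_normes-Delta-up-2} is the elementary one: since $\sigma_N$ is a probability measure, $\int_{\S_N}\psi(|f(r\zeta)|/A)\,d\sigma_N(\zeta)\le\Phi_A(r)$ for every $r\in[0,1)$, so the Orlicz modular $\int_{\B_N}\psi(|f|/A)\,dv_{\alpha}$ is dominated by the left-hand side of \eqref{eq|charac-BO-sup} up to the normalising constant of the polar formula. Consequently any $A$ admissible for $\|f\|_{A_{\alpha,\infty}^{\psi}}$ is admissible for $\|f\|_{A_{\alpha}^{\psi}}$, which yields $\|f\|_{A_{\alpha}^{\psi}}\le\|f\|_{A_{\alpha,\infty}^{\psi}}$ and shows in passing that the class \eqref{eq|charac-BO-sup} is contained in $A_{\alpha}^{\psi}(\B_N)$.

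The reverse inequality is the crux, and it is here that holomorphy together with the $\Delta^{2}$--condition are used to trade the spherical average for the spherical supremum. By Proposition \ref{prop|point-evaluation-bounded-prop}, every $f\in A_{\alpha}^{\psi}(\B_N)$ satisfies $|f(z)|\le\|f\|_{A_{\alpha}^{\psi}}\,\psi^{-1}\big((\tfrac{1+|z|}{1-|z|})^{N+\alpha+1}\big)$; equivalently, by the growth-space identification of Theorem \ref{thm|THM1}, $|f(r\zeta)|\le\|f\|_{w^{\psi}}\,\psi^{-1}(1/(1-r))\lesssim\|f\|_{A_{\alpha}^{\psi}}\,\psi^{-1}(1/(1-r))$ uniformly in $\zeta\in\S_N$. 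The decisive feature is that this bound is \emph{radial}. Absorbing the factor $(\tfrac{1+r}{1-r})^{N+\alpha+1}$ into $\psi^{-1}(1/(1-r))$ by means of the concavity of $\psi^{-1}$ and Lemma \ref{lem|lemma-tech-charac-BO}, one gets a constant $C=C(\psi,\alpha,N)$ with $\Phi_A(r)\le\psi\big(\tfrac{C\|f\|_{A_{\alpha}^{\psi}}}{A}\,\psi^{-1}(1/(1-r))\big)$. This majorant is exactly the radial integrand handled in the proof of Theorem \ref{thm|THM1}: by Lemma \ref{lem|lemma-base-BO-equal-Growth} the function $\psi^{-1}(1/(1-|z|))$ lies in $L^{\psi}(\B_N,v_{\alpha})$, so there is $C'=C'(\psi,\alpha,N)$ with $\int_{\B_N}\psi\big(\tfrac{1}{C'}\psi^{-1}(1/(1-|z|))\big)\,dv_{\alpha}\le1$. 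Choosing $A=CC'\|f\|_{A_{\alpha}^{\psi}}$, and using that the radial majorant of $\Phi_A$ integrates (after the polar reduction, its sphere-supremum equalling its value) to this same modular, yields $\|f\|_{A_{\alpha,\infty}^{\psi}}\le K\|f\|_{A_{\alpha}^{\psi}}$ with $K=CC'$. This also proves the inclusion of $A_{\alpha}^{\psi}(\B_N)$ in the class \eqref{eq|charac-BO-sup}, completing the identification.

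It remains to check that $\|\cdot\|_{A_{\alpha,\infty}^{\psi}}$ is genuinely a norm, which is a routine Luxemburg-type verification. Positive homogeneity follows from the substitution $A\mapsto A/|\lambda|$, and definiteness is inherited from the already-established bound $\|f\|_{A_{\alpha}^{\psi}}\le\|f\|_{A_{\alpha,\infty}^{\psi}}$ and the fact that $\|\cdot\|_{A_{\alpha}^{\psi}}$ is a norm. For the triangle inequality one takes $A>\|f\|_{A_{\alpha,\infty}^{\psi}}$ and $B>\|g\|_{A_{\alpha,\infty}^{\psi}}$, applies the pointwise convexity estimate $\psi(\tfrac{|f|+|g|}{A+B})\le\tfrac{A}{A+B}\psi(\tfrac{|f|}{A})+\tfrac{B}{A+B}\psi(\tfrac{|g|}{B})$, then uses the subadditivity of $\sup_{\zeta\in\S_N}$ and integrates to obtain that the $\Phi$-modular of $(f+g)/(A+B)$ is at most $1$; letting $A,B$ decrease to the respective norms gives $\|f+g\|_{A_{\alpha,\infty}^{\psi}}\le\|f\|_{A_{\alpha,\infty}^{\psi}}+\|g\|_{A_{\alpha,\infty}^{\psi}}$.

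The only real obstacle is the middle step: membership in $A_{\alpha}^{\psi}(\B_N)$ a priori controls only the spherical \emph{averages} of $\psi(|f|/A)$, and these do not by themselves control the spherical \emph{suprema}. It is precisely the $\Delta^{2}$--condition—through the growth-space identification of Theorem \ref{thm|THM1} and the integrability furnished by Lemma \ref{lem|lemma-base-BO-equal-Growth}—that produces a radial majorant for the suprema and thereby closes the gap; without it the reverse inequality, hence the whole characterisation, fails.
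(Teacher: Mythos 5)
Your proof is correct and follows essentially the same route as the paper's: both reduce the spherical supremum to a radial majorant obtained from the point-evaluation estimate of Proposition \ref{prop|point-evaluation-bounded-prop} and the $\Delta^2$-integrability of $\psi^{-1}(1/(1-|z|))$ from Lemma \ref{lem|lemma-base-BO-equal-Growth} (the paper packages this as the abstract radial function $g\in L^{\psi}([0,1],c_{\alpha}r^{2N-1}(1-r^2)^{\alpha}dr)$ via Corollary \ref{cor|coro-evaluation-norm-Lpsi-Delta2} and the maximum modulus principle, whereas you write the majorant out explicitly). Your additional verification of the norm axioms is the routine check the paper leaves to the reader.
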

\begin{proof}
According to Corollary \ref{cor|coro-evaluation-norm-Lpsi-Delta2},
the maximum modulus principle and an integration in polar coordinates,
there exists a function $g\in L^{\psi}\left([0,1],c_{\alpha}r^{2N-1}\left(1-r^{2}\right)^{\alpha}dr\right)$
such that for any $f\in A_{\alpha}^{\psi}\left(\mathbb{B}_{N}\right)$,
\begin{equation}
\sup_{\left|z\right|\leq r}\left|f(z)\right|\leq g(r)\left\Vert f\right\Vert _{A_{\alpha}^{\psi}},\label{eq|carac-BO-sup-0}
\end{equation}
hence the first assertion. For the last assertions, observe first
that the inequality $\left\Vert f\right\Vert _{A_{\alpha}^{\psi}}\leq\left\Vert f\right\Vert _{A_{\alpha,\infty}^{\psi}}$
is trivial for any $f$ holomorphic on $\mathbb{B}_{N}$. Let now
$A>0$ and $f\in A_{\alpha}^{\psi}\left(\mathbb{B}_{N}\right)$. By
(\ref{eq|carac-BO-sup-0}) we get
\[
\int_{0}^{1}\sup_{\zeta\in\mathbb{S}_{N}}\psi\left(\frac{\left|f(r\zeta)\right|}{A}\right)r^{2N-1}\left(1-r^{2}\right)^{\alpha}dr\leq\int_{0}^{1}\psi\left(\frac{g(r)\left\Vert f\right\Vert _{A_{\alpha}^{\psi}}}{A}\right)r^{2N-1}\left(1-r^{2}\right)^{\alpha}dr.
\]
Let $K$ be the norm of $g$ in $L^{\psi}\left([0,1],c_{\alpha}r^{2N-1}\left(1-r^{2}\right)^{\alpha}dr\right)$.
If $A>\left\Vert f\right\Vert _{A_{\alpha}^{\psi}}K$, then the left
hand-side of the previous inequality is less than or equal to $1$,
so $A\geq\left\Vert f\right\Vert _{A_{\alpha,\infty}^{\psi}}$ hence
(\ref{eq|eq_normes-Delta-up-2}).

The fact that $\left\Vert \cdot\right\Vert _{A_{\alpha,\infty}^{\psi}}$
is a norm is easily checked.
\end{proof}
\smallskip{}

\begin{remark}
Notice that the constant $K$ above can be taken equal to the norm
in $L_{\alpha}^{\psi}$ of $z\mapsto\left\Vert \delta_{z}\right\Vert _{(A_{\alpha}^{\psi})^*}$,
up to some constant depending only on $N$ and $\alpha$.
\end{remark}
%

In the next section we will apply those observations to composition
operators, and furthermore we will be able to refine Theorem \ref{thm|THM1}. Of course, in view of Theorem \ref{thm|THM1}, the forthcoming results involving $A_{\alpha}^{\psi}(\B_N)$ with $\psi \in \Delta^2$, can be re-read with $H_{w^{\psi}}^{\infty}(\B_N)$ instead of $A_{\alpha}^{\psi}(\B_N)$.

\section{Application to composition operators}

In the sequel we will use the notations $w^{\psi,\gamma}$ and $w^{\psi}$, see \eqref{weightpsi}.

\subsection{Preliminary results}

\subsubsection{Composition operators on weighted Bergman-Orlicz and Hardy-Orlicz
spaces of $\mathbb{B}_{N}$}

A useful tool to study the boundedness and the compactness of composition
operators acting on Hardy-Orlicz and Bergman-Orlicz spaces are Carleson
embedding theorems. In this paper we will use these techniques only to deal with the compactness. They involve geometric notions that we will briefly
introduce here. For $\zeta\in\B_{N}$ and $0<h<1$, let us
denote by $S\left(\zeta,h\right)$ and $\mathcal{S}\left(\zeta,h\right)$
the non-isotropic ``balls'', respectively in $\mathbb{B}_{N}$ and
$\overline{\mathbb{B}_{N}}$, defined by
\[
S\left(\zeta,h\right)=\left\{ z\in\mathbb{B}_{N},\,\left|1-\left\langle z,\zeta\right\rangle \right|<h\right\} \mbox{ and }\mathcal{S}\left(\zeta,h\right)=\left\{ z\in\overline{\mathbb{B}_{N}},\,\left|1-\left\langle z,\zeta\right\rangle \right|<h\right\} .
\]
For $\phi:\mathbb{B}_{N}\rightarrow\mathbb{B}_{N}$, we denote by
$\mu_{\phi,\alpha}$ the pull-back measure of $v_{\alpha}$ by $\phi$
and by $\mu_{\phi}$ that of $\sigma_{N}$ by the boundary limit $\phi^{*}$
of $\phi$. To be precise, given $E\subset\mathbb{B}_{N}$ (resp.
$E\subset\overline{\mathbb{B}_{N}}$),
\[
\mu_{\phi,\alpha}\left(E\right)=v_{\alpha}\left(\phi^{-1}\left(E\right)\right)\mbox{ (resp. }\mu_{\phi}\left(E\right)=\sigma_{N}\left(\left(\phi^{*}\right)^{-1}\left(E\right)\cap\mathbb{S}_{N}\right)\mbox{)}.
\]
In view of the end of Paragraph 2.1, we will unify the notations denoting
also by $\mu_{\phi,\alpha}$, with $\alpha=-1$, the measure $\mu_{\phi}$
(the latter acting on $\overline{\mathbb{B}_{N}}$). Then we define
the functions $\varrho_{\phi,\alpha}$ on the interval $\left(0,1\right)$ by
\[
\varrho_{\phi,\alpha}\left(h\right)=\left\{ \begin{array}{l}
\sup_{\zeta\in\mathbb{S}_{N}}\mu_{\phi,\alpha}\left(S\left(\zeta,h\right)\right)\mbox{ if }\alpha>-1\\
\sup_{\zeta\in\mathbb{S}_{N}}\mu_{\phi,\alpha}\left(\mathcal{S}\left(\zeta,h\right)\right)\mbox{ if }\alpha=-1
\end{array}\right..
\]

First of all, by testing the boundedness of $C_{\phi}$ on a standard function, one easily gets the following necessary condition.

\begin{fact}\label{factbounded}If $C_{\phi}$ is bounded on $A_{\alpha}^{\psi}\left(\mathbb{B}_{N}\right)$
then there exist $A>0$ and $0<\eta <1$, such that
\[
\varrho_{\phi,\alpha}(h)\leq\frac{1}{\psi\left(A\psi^{-1}\left(1/h^{N+\alpha+1}\right)\right)}
\]
for every $h\in\left(0,\eta\right)$.
\end{fact}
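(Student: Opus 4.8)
The plan is to test the boundedness of $C_\phi$ against the standard family of peak functions adapted to the Carleson windows and to read off the claimed decay of $\varrho_{\phi,\alpha}$ directly from the definition of the Luxemburg norm. Fix $\zeta\in\mathbb{S}_N$ and a small $h\in(0,1)$, set $a=(1-h)\zeta$ (so that $1-|a|=h$), choose once and for all an exponent $\beta>N+\alpha+1$, and consider
\[
f_a(z)=\psi^{-1}\!\left(\frac{1}{(1-|a|)^{N+\alpha+1}}\right)\left(\frac{1-|a|}{1-\langle z,a\rangle}\right)^{\beta},\quad z\in\B_N.
\]
I will establish two properties of this family: a uniform norm bound $\sup_{a}\|f_a\|_{A_\alpha^\psi}=:K<\infty$, and a lower bound $|f_a(z)|\ge 2^{-\beta}\psi^{-1}\!\big(1/h^{N+\alpha+1}\big)$ valid for every $z\in S(\zeta,h)$. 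The second is elementary: for $z\in S(\zeta,h)$ one has $1-\langle z,a\rangle=(1-\langle z,\zeta\rangle)+h\langle z,\zeta\rangle$, hence $|1-\langle z,a\rangle|\le h+h=2h=2(1-|a|)$, so the modulating factor is $\ge 2^{-\beta}$, while $\psi^{-1}(1/(1-|a|)^{N+\alpha+1})=\psi^{-1}(1/h^{N+\alpha+1})$.

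Once these are in hand the conclusion is immediate. Since $C_\phi$ is bounded, $\|f_a\circ\phi\|_{A_\alpha^\psi}\le\|C_\phi\|\,K=:C_0$, so by definition of the Luxemburg norm $\int_{\B_N}\psi(|f_a\circ\phi|/C_0)\,dv_\alpha\le1$. Restricting the integral to $\phi^{-1}(S(\zeta,h))$, on which $\phi(z)\in S(\zeta,h)$ and thus $|f_a(\phi(z))|\ge 2^{-\beta}\psi^{-1}(1/h^{N+\alpha+1})$, and using that $\psi$ is increasing, I obtain
\[
\psi\!\left(\frac{2^{-\beta}}{C_0}\,\psi^{-1}\!\Big(\frac{1}{h^{N+\alpha+1}}\Big)\right)\mu_{\phi,\alpha}(S(\zeta,h))\le1,
\]
that is $\mu_{\phi,\alpha}(S(\zeta,h))\le 1/\psi\big(A\psi^{-1}(1/h^{N+\alpha+1})\big)$ with $A=2^{-\beta}/C_0$. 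Taking the supremum over $\zeta\in\mathbb{S}_N$ yields the Fact for $\alpha>-1$. The case $\alpha=-1$ is identical, working on the sphere: one replaces $dv_\alpha$ by $\sigma_N$, $S(\zeta,h)$ by $\mathcal{S}(\zeta,h)$, and uses $\sigma_N(\mathcal{S}(\zeta,s))\approx s^{N}=s^{N+\alpha+1}$ together with $\|f_a\|_{H^\psi}=\sup_{0<r<1}\|(f_a)_r\|_{L^\psi(\sigma_N)}$.

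The main obstacle is the uniform norm bound, and this is where the choice $\beta>N+\alpha+1$ is essential: the critical exponent $\beta=N+\alpha+1$ only yields a bound growing like $\log(1/(1-|a|))$, since there the pointwise size of $f_a$ and the measure of the relevant shells cancel exactly at every scale. I would prove the bound by a dyadic decomposition of $\B_N$ into the shells $A_j=\{\,z:\ 2^{j}(1-|a|)\le|1-\langle z,a\rangle|<2^{j+1}(1-|a|)\,\}$, $j\ge0$; because $|1-\langle z,a\rangle|\ge 1-|\langle z,a\rangle|>1-|a|$ for every $z\in\B_N$, these shells cover the ball. On $A_j$ one has $|f_a|\le 2^{-j\beta}\psi^{-1}(1/(1-|a|)^{N+\alpha+1})$, while the standard Carleson-window estimate gives $v_\alpha(A_j)\lesssim (2^{j}(1-|a|))^{N+\alpha+1}$. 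Using only the convexity inequality $\psi(\lambda x)\le\lambda\psi(x)$ for $0\le\lambda\le1$ together with the identity $\psi(\psi^{-1}(1/(1-|a|)^{N+\alpha+1}))=1/(1-|a|)^{N+\alpha+1}$, the contribution of each shell to $\int\psi(|f_a|/C)\,dv_\alpha$ is $\lesssim 2^{-j(\beta-N-\alpha-1)}/C$; summing the resulting geometric series and choosing $C$ large (but independent of $a$) makes the total at most $1$, giving $K<\infty$. It is worth noting that this argument never invokes the $\Delta^{2}$--condition, consistent with the Fact being stated for an arbitrary Orlicz function $\psi$.
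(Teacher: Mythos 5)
Your proof is correct, and it is essentially the argument the paper intends: the ``standard function'' alluded to before the Fact is exactly the normalized kernel $g_{a}=\frac{\psi^{-1}\left(1/(1-|a|)^{N+\alpha+1}\right)}{2^{N+\alpha+1}}f_{a}$ introduced just afterwards, which is your test function with $\beta=2(N+\alpha+1)$, and the uniform bound $\sup_a\Vert g_a\Vert_{A_{\alpha}^{\psi}}\leq 1$ that you derive by hand via the dyadic shells is the content of the lemmas the author cites from his earlier papers and from L\'ef\`evre--Li--Queff\'elec--Rodr\'iguez-Piazza. Your write-up supplies the details the paper omits (in particular the observation that no $\Delta^{2}$--condition is needed), but the route is the same.
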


The next theorem, stated for $\psi$ in the $\Delta^2$--class, is a particular case of the one obtained in \cite{charpentier_composition_2011,charpentier_composition_2013} for a larger class of Orlicz functions (see also \cite{lefevre_compact_2009,lefevre_compact_2013} for the unit disc).
\begin{theorem}
\label{thm|carac-cont-embedding-H-O}Let $\alpha\geq-1$, let $\psi$
be an Orlicz function satisfying the $\Delta^2$--condition, and let $\phi:\mathbb{B}_{N}\rightarrow\mathbb{B}_{N}$
be holomorphic. The composition operator $C_{\phi}$ is compact from $A_{\alpha}^{\psi}$ into itself if and only if for every $A>0$, there exists $h_A \in (0,1)$ such that
\[
\varrho_{\phi,\alpha}(h)\leq\frac{1}{\psi\left(A\psi^{-1}\left(1/h^{N+\alpha+1}\right)\right)}
\]
for every $h\in\left(0,h_A\right)$.
\end{theorem}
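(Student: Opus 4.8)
The statement is an instance of the Carleson-embedding characterization of compactness, and the quickest route is to deduce it from the general embedding theorems of \cite{charpentier_composition_2011,charpentier_composition_2013} (and \cite{lefevre_compact_2009,lefevre_compact_2013} on $\D$), observing that $\Delta^2\subset\nabla_2$ places $\psi$ well inside the class those results cover. I will instead indicate how I would organise a self-contained argument, since all the ingredients are available above. The starting point is the change of variables $\int_{\B_N}\psi(|f\circ\phi|/A)\,dv_\alpha=\int_{\B_N}\psi(|f|/A)\,d\mu_{\phi,\alpha}$, which turns the Luxemburg norm of $C_\phi f$ into a $\psi$-integral against the pull-back measure $\mu_{\phi,\alpha}$; thus compactness of $C_\phi$ is exactly a \emph{vanishing} Carleson-type embedding of $\mu_{\phi,\alpha}$ into $L^\psi_\alpha$, and $\varrho_{\phi,\alpha}$ is the associated Carleson function.

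First I would reduce compactness to a convergence test. Since $\psi\in\Delta^2$ implies the $\nabla_2$--condition (Section \ref{Section1}), Theorem \ref{thm|weak-star-topo-H-O}(4) identifies $A_\alpha^\psi$ with $(AM_\alpha^\psi)^{**}$ and $C_\phi$ with the biadjoint of its restriction to the separable space $AM_\alpha^\psi$; as an operator is compact if and only if its biadjoint is, it suffices to treat $C_\phi$ on $AM_\alpha^\psi$. Combined with Theorem \ref{thm|weak-star-topo-H-O}(2) this yields the usual criterion: $C_\phi$ is compact if and only if $\|C_\phi f_n\|_{A_\alpha^\psi}\to 0$ for every sequence $(f_n)$ bounded in $A_\alpha^\psi$ with $f_n\to 0$ uniformly on compact subsets of $\B_N$. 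Throughout I would use the pointwise growth control furnished by Proposition \ref{prop|point-evaluation-bounded-prop} together with Lemma \ref{lem|lemma-tech-charac-BO}, namely $|f(z)|\lesssim\|f\|_{A_\alpha^\psi}\,\psi^{-1}\bigl((1-|z|)^{-1}\bigr)$ for $|z|$ close to $1$; this is exactly where $\Delta^2$ absorbs the exponent $N+\alpha+1$ inside $\psi^{-1}$ (and, for $\alpha>-1$, reflects the identification $A_\alpha^\psi=H_{w^\psi}^\infty$ of Theorem \ref{thm|THM1}).

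For sufficiency, assume the vanishing condition and let $(f_n)$ be as above with $\|f_n\|_{A_\alpha^\psi}\le 1$. Fixing $\varepsilon>0$, I would split $\int_{\B_N}\psi(|f_n|/\varepsilon)\,d\mu_{\phi,\alpha}$ over $\{|z|\le 1-\eta\}$ and $\{|z|>1-\eta\}$. The inner part tends to $0$ as $n\to\infty$ for each fixed $\eta$, since $f_n\to 0$ uniformly there and $\mu_{\phi,\alpha}$ is finite. The outer part is the genuine Carleson estimate: using $|f_n(z)|\lesssim\psi^{-1}\bigl((1-|z|)^{-1}\bigr)$ together with the decay $\varrho_{\phi,\alpha}(h)\le 1/\psi\bigl(A\psi^{-1}(1/h^{N+\alpha+1})\bigr)$, one dominates $\int_{\{|z|>1-\eta\}}\psi(|f_n|/\varepsilon)\,d\mu_{\phi,\alpha}$ by a quantity that is uniform in $n$ and tends to $0$ as $\eta\to 0$; the freedom to take $A$ arbitrarily large is precisely what drives this tail below $1/2$ once $\eta$ is small. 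Then an $\varepsilon/2$ argument gives $\int_{\B_N}\psi(|f_n|/\varepsilon)\,d\mu_{\phi,\alpha}\le 1$ for $n$ large, i.e.\ $\|C_\phi f_n\|_{A_\alpha^\psi}\le\varepsilon$. Here Proposition \ref{prop|prop-Orlicz-growth}(3) is invoked repeatedly to compare $\psi$ at comparable arguments. This estimate is the technical heart and the step I expect to be the main obstacle, as it is exactly the content of the general embedding theorems cited above.

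For necessity, I would feed into the test functions a normalized family concentrated at boundary points: for $a\in\B_N$ near $\mathbb{S}_N$, a suitable scalar multiple of a power of $(1-\langle z,a\rangle)^{-1}$, normalized so as to be bounded in $A_\alpha^\psi$ while tending to $0$ uniformly on compacta; its $A_\alpha^\psi$--norm is governed by $\psi^{-1}\bigl((1-|a|)^{-(N+\alpha+1)}\bigr)$, as in Proposition \ref{prop|point-evaluation-bounded-prop} and the computation behind Fact \ref{factbounded}. Applying $\|C_\phi f_a\|_{A_\alpha^\psi}\to 0$ and bounding the resulting $\psi$-integral from below by the mass of $\mu_{\phi,\alpha}$ on the single non-isotropic window $S(a/|a|,1-|a|)$ yields the vanishing of $\varrho_{\phi,\alpha}(h)\,\psi\bigl(A\psi^{-1}(1/h^{N+\alpha+1})\bigr)$ as $h\to 0$, for each fixed $A$. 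The Hardy case $\alpha=-1$ runs identically, with $S(\zeta,h)$ replaced by $\mathcal{S}(\zeta,h)$ and boundary values in place of values on $\B_N$; note that there the growth-space identification of Theorem \ref{thm|THM1} is unavailable (it requires $\alpha>-1$), but Proposition \ref{prop|point-evaluation-bounded-prop} still supplies the pointwise bound that the argument needs.
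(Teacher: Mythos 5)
Your proposal is correct and takes the same route as the paper, which in fact gives no proof of Theorem \ref{thm|carac-cont-embedding-H-O} at all: it presents the statement as a particular case of the Carleson-embedding theorems of \cite{charpentier_composition_2011,charpentier_composition_2013} (see also \cite{lefevre_compact_2009,lefevre_compact_2013} for the disc), exactly the deduction you recommend, noting that the $\Delta^2$--condition places $\psi$ inside the class those results cover. Your supplementary self-contained sketch is a faithful outline of how those references argue (biadjoint reduction, pointwise growth bound from Proposition \ref{prop|point-evaluation-bounded-prop}, splitting into a compact part and a corona tail, and boundary test functions for necessity), and you correctly flag the uniform tail estimate over the corona as the one technical step you are importing rather than proving.
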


\medskip{}

For $\alpha\geq-1$ and $a\in\mathbb{B}_{N}$, we define the function
$f_{a}\in H^{\infty}$ by
\[
f_a(z)=\left(1-\left|a\right|\right)^{N+\alpha+1}\left(\frac{1-\left|a\right|^{2}}{\left(1-\left\langle z,a\right\rangle \right)^{2}}\right)^{N+\alpha+1}.
\]
Then $|f_{a}\left(z\right)|=\left(1-\left|a\right|\right)^{N+\alpha+1}H_{a}\left(z\right)$ where $H_{a}$ is the \emph{Berezin kernel}. The following corollary will
be useful for the proof of (2)$\Leftrightarrow$(3) in Theorem \ref{main-thm-compacntess-Deltaup2}.
\begin{corollary}
\label{cor|corollaire_Berezin_cont_comp}Let $\alpha\geq-1$, let
$\psi$ be an Orlicz function satisfying the $\Delta^{2}$--condition
and let $\phi:\mathbb{B}_{N}\rightarrow\mathbb{B}_{N}$ be holomorphic.
$C_{\phi}$ is compact from $A_{\alpha}^{\psi}\left(\mathbb{B}_{N}\right)$
into itself if and only if
\begin{equation}\label{fa-comp-compo-op}
\left\Vert f_{a}\circ\phi\right\Vert _{A_{\alpha}^{\psi}\left(\mathbb{B}_{N}\right)}=o_{\left|a\right|\rightarrow1}\left(\frac{1}{\psi^{-1}\left(1/\left(1-\left|a\right|\right)^{N+\alpha+1}\right)}\right).
\end{equation}

\end{corollary}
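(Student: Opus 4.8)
The plan is to derive \eqref{fa-comp-compo-op} from the Carleson-type characterization of compactness in Theorem~\ref{thm|carac-cont-embedding-H-O}, by comparing the test-function norms $\|f_a\circ\phi\|_{A_\alpha^\psi}$ with the Carleson function $\varrho_{\phi,\alpha}$. Write $d:=N+\alpha+1$; then $w^{\psi,d}(a)=[\psi^{-1}(1/(1-|a|)^{d})]^{-1}$ is exactly the right-hand side of \eqref{fa-comp-compo-op}. By Theorem~\ref{thm|carac-cont-embedding-H-O}, $C_\phi$ is compact if and only if the following \emph{little-oh Carleson condition} holds: for every $A>0$ there is $h_A\in(0,1)$ with $\varrho_{\phi,\alpha}(h)\le 1/\psi\big(A\psi^{-1}(1/h^{d})\big)$ for all $h<h_A$. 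It therefore suffices to show that this condition is equivalent to $\|f_a\circ\phi\|_{A_\alpha^\psi}=o(w^{\psi,d}(a))$ as $|a|\to1$. The geometric input is the identity $|f_a(w)|=(1-|a|)^{d}H_a(w)$ together with the elementary bounds $|f_a|\le 2^{d}$ on $\overline{\B_N}$ and $|1-\langle w,a\rangle|\approx 2^k(1-|a|)\Rightarrow|f_a(w)|\approx 2^{-2dk}$, which identify the super-level sets of $|f_a|$ with the Carleson windows $S(a/|a|,2^k(1-|a|))$ up to constants.

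The easy implication $\eqref{fa-comp-compo-op}\Rightarrow$ (little-oh Carleson) is a single-window test. For $\zeta=a/|a|$ and $h\approx 1-|a|$ one has $|f_a|\ge\kappa>0$ on $S(\zeta,h)$, so, evaluating the Luxemburg integral against the probability measure $\mu_{\phi,\alpha}$ at $C=\|f_a\circ\phi\|_{A_\alpha^\psi}$,
$$1\ge\int\psi\left(\frac{|f_a|}{C}\right)\,d\mu_{\phi,\alpha}\ge\psi\left(\frac{\kappa}{\|f_a\circ\phi\|_{A_\alpha^\psi}}\right)\,\mu_{\phi,\alpha}\left(S(\zeta,h)\right).$$
Given $A>0$, taking $\varepsilon\approx\kappa/A$, feeding in $\|f_a\circ\phi\|_{A_\alpha^\psi}\le\varepsilon\,w^{\psi,d}(a)$, rewriting $1-|a|$ in terms of $h$ (Lemma~\ref{lem|lemma-tech-charac-BO} absorbs the constant inside $\psi^{-1}$), and taking the supremum over $\zeta$ yields $\varrho_{\phi,\alpha}(h)\le 1/\psi(A\psi^{-1}(1/h^{d}))$ for all small $h$, which is the Carleson condition.

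The substantial direction is the converse. Writing the defining integral of $\|f_a\circ\phi\|_{A_\alpha^\psi}$ via the pull-back measure $\mu_{\phi,\alpha}$ and decomposing $\overline{\B_N}$ into the rings $R_k=S(\zeta,2^k(1-|a|))\setminus S(\zeta,2^{k-1}(1-|a|))$, $\zeta=a/|a|$, on which $|f_a|\approx 2^{-2dk}$ and $\mu_{\phi,\alpha}(R_k)\le\varrho_{\phi,\alpha}(2^k(1-|a|))$, I would bound $\int\psi(\lambda_a|f_a|/\varepsilon)\,d\mu_{\phi,\alpha}$, where $\lambda_a=\psi^{-1}(1/(1-|a|)^{d})=1/w^{\psi,d}(a)$, by the sum $\sum_k\psi(\lambda_a 2^{-2dk}/\varepsilon)\,\varrho_{\phi,\alpha}(2^k(1-|a|))$. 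Putting $x_k=\psi^{-1}(2^{-dk}(1-|a|)^{-d})$, so that $\lambda_a=x_0$ and the assumed bound reads $\varrho_{\phi,\alpha}(2^k(1-|a|))\le 1/\psi(Ax_k)$, convexity of $\psi$ gives $2^{-dk}x_0\le x_k$, hence $x_0 2^{-2dk}\le x_k 2^{-dk}$, so each summand is at most $\psi(x_k 2^{-dk}/\varepsilon)/\psi(Ax_k)$; a further application of $\psi(\theta y)\le\theta\psi(y)$ (valid for $0\le\theta\le1$) with $\theta=2^{-dk}/(A\varepsilon)$ bounds it by $2^{-dk}/(A\varepsilon)$ as soon as $A\varepsilon\ge1$. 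Summing this geometric series over the windows with $2^k(1-|a|)<1$ and choosing $A$ large in terms of $\varepsilon$—permissible, since the Carleson condition is assumed for \emph{every} $A$—makes this main part $\le 1/2$; together with the tail estimate below this gives $\int\psi(\lambda_a|f_a|/\varepsilon)\,d\mu_{\phi,\alpha}\le1$, i.e. $\|f_a\circ\phi\|_{A_\alpha^\psi}\le\varepsilon\,w^{\psi,d}(a)$, which is \eqref{fa-comp-compo-op}.

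The main obstacle—and the place where the $\Delta^{2}$--condition is indispensable—is the tail $k$ with $2^k(1-|a|)\ge1$, where the windows exhaust $\overline{\B_N}$ and only $\varrho_{\phi,\alpha}\le1$ is available. There each summand equals $\psi(\lambda_a 2^{-2dk}/\varepsilon)$, and since Proposition~\ref{prop|prop-Orlicz-growth}(3) forces $\psi^{-1}$ to grow no faster than a power of a logarithm, $\lambda_a 2^{-2dk}=\psi^{-1}((1-|a|)^{-d})2^{-2dk}\to0$ rapidly; the tail is therefore $o(1)$ as $|a|\to1$, hence $\le1/2$ for $|a|$ close enough to $1$. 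One must also check that the constants implicit in $|f_a|\approx 2^{-2dk}$ and in Lemma~\ref{lem|lemma-tech-charac-BO} can be absorbed into the free parameter $A$, which is precisely what $\Delta^2$ allows. Finally, the whole argument is uniform in $\alpha\ge-1$: for $\alpha=-1$ one replaces $S(\zeta,h)$, $v_\alpha$ and $\mu_{\phi,\alpha}$ by $\mathcal S(\zeta,h)$, $\sigma_N$ and $\mu_\phi$ as fixed after Lemma~\ref{lem|lemme-alpha-tend-moins-1}, the estimates being identical.
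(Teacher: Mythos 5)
Your proof is correct in substance, but the direction ``compactness $\Rightarrow$ \eqref{fa-comp-compo-op}'' is handled quite differently from the paper. For the converse direction your single-window test ($|f_a|\geq\kappa$ on $S(a/|a|,1-|a|)$, hence $\varrho_{\phi,\alpha}(h)\leq 1/\psi(\kappa/\Vert f_a\circ\phi\Vert_{A_\alpha^\psi})$, then Theorem \ref{thm|carac-cont-embedding-H-O}) is essentially the paper's argument, except that you prove the window estimate directly where the paper cites it from an earlier work. For the forward direction the paper does something much softer: it normalizes $f_a$ into functions $g_a$ of unit $A_\alpha^\psi$-norm which tend to $0$ uniformly on compacta, and lets compactness upgrade this to norm convergence $g_a\circ\phi\to 0$; no Carleson machinery and in fact no $\Delta^2$--condition is needed there (which is why the paper's remark can claim this half for arbitrary Orlicz functions). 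You instead pass through the little-oh Carleson condition a second time and prove quantitatively, by a dyadic ring decomposition around $a/|a|$, that this condition forces the decay of $\Vert f_a\circ\phi\Vert_{A_\alpha^\psi}$. This is a legitimate and more self-contained route, and the convexity bookkeeping ($2^{-dk}x_0\leq x_k$, then $\psi(\theta y)\leq\theta\psi(y)$) is sound; what it buys is an explicit mechanism rather than an abstract compactness argument, at the cost of length. One small imprecision to repair: the little-oh Carleson condition gives the bound $\varrho_{\phi,\alpha}(h)\leq 1/\psi\bigl(A\psi^{-1}(1/h^{N+\alpha+1})\bigr)$ only for $h<h_A$, not for all $h<1$, so the scales $h_A\leq 2^k(1-|a|)<1$ must be split off from your ``main part''; since there are only $O(\log(1/h_A))$ such rings (a number independent of $a$) and on each of them $\psi(\lambda_a 2^{-2dk}/\varepsilon)\leq\psi\bigl(C\lambda_a(1-|a|)^{2d}h_A^{-2d}/\varepsilon\bigr)\to 0$ as $|a|\to 1$ (using only $\psi^{-1}(y)=o(y)$), they are absorbed exactly like your tail $2^k(1-|a|)\geq 1$, so this is a one-sentence fix rather than a genuine gap.
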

\begin{unremark}This result was stated in \cite{lefevre_compact_2009} for an arbitrary
Orlicz function, when $\alpha=-1$ and $N=1$. Theorem \ref{thm|carac-cont-embedding-H-O}, Corollary \ref{cor|corollaire_Berezin_cont_comp} and its proof below, work for a class of
Orlicz functions much larger than the $\Delta^{2}$--class, namely for
Orlicz functions satisfying the so-called $\nabla_{0}$--condition; we refer
to \cite{charpentier_composition_2011,charpentier_composition_2013}.
\end{unremark}
\begin{proof}[Proof of the Corollary]For the \emph{only if} part, we introduce
the function
\[
g_{a}\left(z\right)=\frac{\psi^{-1}\left(1/\left(1-\left|a\right|\right)^{N+\alpha+1}\right)}{2^{N+\alpha+1}}f_{a}\left(z\right),\, z\in\mathbb{B}_{N}.
\]
$g_{a}$ lies in the unit ball of $A_{\alpha}^{\psi}\left(\mathbb{B}_{N}\right)$
(see \cite[Lemma 3.9]{lefevre_compact_2009} and the proofs of \cite[Propositions 1.9 and 1.6 respectively]{charpentier_composition_2011,charpentier_composition_2013})
and tends to $0$ uniformly on every compact set of $\mathbb{B}_{N}$,
as $\left|a\right|$ tends to $1$. In particular $g_{a}\circ\phi$
converges pointwise to $0$ as $\left|a\right|\rightarrow1$. $C_{\phi}$
being compact on $A_{\alpha}^{\psi}\left(\mathbb{B}_{N}\right)$,
up to take a subsequence, we may assume that $g_{a}\circ\phi$ tends
in $A_{\alpha}^{\psi}\left(\mathbb{B}_{N}\right)$ to some function
$g$ as $\left|a\right|\rightarrow1$, hence $g=0$ since the convergence
in $A_{\alpha}^{\psi}\left(\mathbb{B}_{N}\right)$ implies the pointwise
one.
This ends the proof of the only if part.

For the converse, we only deal with $\alpha > -1$, the case $\alpha = -1$ being the same up to change the non-isotropic balls. It follows from the proof of \cite[Theorem 2.5 (1)]{charpentier_composition_2013} that, given $\zeta \in \mathbb{S}_N$ and $a=|a|\zeta \in \B_N$, the following estimate holds:
$$\mu_{\phi,\alpha}(S(\zeta, 1-|a|))\leq \frac{1}{\psi\left(K/\left\Vert f_a\circ \phi\right\Vert _{A_{\alpha}^{\psi}}\right)}$$
for some constant $K>0$ independant of $\zeta$. So, if \eqref{fa-comp-compo-op} holds, then for every $\varepsilon > 0$, there exists $h_{\varepsilon}\in (0,1)$ such that for every $\zeta \in \mathbb{S}_N$ and  every $a=|a|\zeta \in \B_N$ with $|a|\in (1-h_{\varepsilon},1)$ we have
\begin{equation}\label{todaytrice}\mu_{\phi,\alpha}(S(\zeta, 1-|a|))\leq \frac{1}{\psi\left(\frac{K}{\varepsilon}\psi^{-1}\left(1/\left(1-\left|a\right|\right)^{N+\alpha+1}\right)\right)},\end{equation}
hence $C_{\phi}$ is compact on $A_{\alpha}^{\psi}(\B_N)$ by Theorem \ref{thm|carac-cont-embedding-H-O}.
\end{proof}
\smallskip{}

We deduce from the previous corollary some sufficient conditions for the compactness of $C_{\phi}$ on $A_{\alpha}^{\psi}(\B_N)$.

\begin{proposition}
\label{prop|1st_applic}Let $\alpha\geq-1$, let $\psi$ be an Orlicz
function satisfying the $\Delta^2$--condition and let $\phi:\mathbb{B}_{N}\rightarrow\mathbb{B}_{N}$
be holomorphic. Then $C_{\phi}$ is compact on $A_{\alpha}^{\psi}\left(\mathbb{B}_{N}\right)$ whenever one of the two following conditions is satisfied:
\begin{enumerate}
\item [(i)]$1/\left(1-\left|\phi\right|\right) \in L_{\alpha}^{\psi}$;
\item [(ii)]$\sum_{n=0}^{\infty}\left\Vert |\phi|^{n}\right\Vert _{L_{\alpha}^{\psi}}<\infty$.
\end{enumerate}
\end{proposition}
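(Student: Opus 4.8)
The plan is to use Corollary \ref{cor|corollaire_Berezin_cont_comp}, which reduces compactness to verifying the decay estimate \eqref{fa-comp-compo-op}, namely that $\left\Vert f_a\circ \phi\right\Vert _{A_{\alpha}^{\psi}}=o\bigl(1/\psi^{-1}(1/(1-|a|)^{N+\alpha+1})\bigr)$ as $|a|\to 1$. For both conditions (i) and (ii), I would bound $\left\Vert f_a\circ\phi\right\Vert _{A_{\alpha}^{\psi}}$ from above. Recall that $|f_a(z)|=(1-|a|)^{N+\alpha+1}H_a(z)$ where $H_a$ is the Berezin kernel, and that $H_a(z)=\bigl((1-|a|^2)/|1-\langle z,a\rangle|^2\bigr)^{N+\alpha+1}$; the crude pointwise bound $|1-\langle z,a\rangle|\geq 1-|z|$ gives $H_a(z)\lesssim (1-|a|)^{-(N+\alpha+1)}(1-|z|)^{-(N+\alpha+1)}$. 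Hence $|f_a(\phi(z))|\lesssim (1-|a|)^{N+\alpha+1}(1-|a|)^{-(N+\alpha+1)}(1-|\phi(z)|)^{-(N+\alpha+1)}$, which I would want to phrase so that the leading factor in $(1-|a|)$ cancels appropriately.

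\smallskip

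For part (i), I would exploit the $\Delta^2$--condition. First I would establish a clean pointwise majorant: since $|f_a\circ\phi(z)|\lesssim \bigl(\tfrac{1}{1-|\phi(z)|}\bigr)^{N+\alpha+1}$ uniformly in $a$, and since $\psi\in\Delta^2$ implies (by Lemma \ref{lem|lemma-tech-charac-BO}) that $\psi^{-1}(y^{N+\alpha+1})\leq C\psi^{-1}(y)$, one gets $|f_a\circ\phi(z)|\lesssim \psi\bigl(C\,\psi^{-1}(\tfrac{1}{1-|\phi(z)|})\bigr)$-type control, so that the hypothesis $1/(1-|\phi|)\in L_\alpha^\psi$ forces $f_a\circ\phi$ to lie in a fixed bounded subset of $L_\alpha^\psi$. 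The key extra input is a \emph{uniform integrability / dominated convergence} argument: because $f_a\circ\phi\to 0$ pointwise and is dominated by a fixed $L_\alpha^\psi$ function, one shows $\left\Vert f_a\circ\phi\right\Vert _{A_{\alpha}^{\psi}}\to 0$, and then I must upgrade this to the sharper little-$o$ rate demanded by \eqref{fa-comp-compo-op}. I would do this by splitting the defining integral $\int \psi(|f_a\circ\phi|/A)\,dv_\alpha$ over the region where $\phi$ is far from the boundary (which vanishes as $|a|\to1$ by pointwise convergence and the dominating function) and the region where $\phi$ is near the boundary (controlled by the tail of the fixed $L_\alpha^\psi$ majorant), choosing $A=\varepsilon/\psi^{-1}(1/(1-|a|)^{N+\alpha+1})$ and verifying the integral stays $\leq 1$ for $|a|$ close to $1$; the $\Delta^2$--growth is what makes these two scales compatible.

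\smallskip

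For part (ii), the natural route is to expand $f_a$ (or rather $H_a$) and estimate the Luxemburg norm termwise. Writing the Berezin kernel via its power series in $\langle \phi,a\rangle$ and using $|\langle\phi(z),a\rangle|\leq|\phi(z)|$, one bounds $\left\Vert f_a\circ\phi\right\Vert _{A_{\alpha}^{\psi}}$ by a series whose general term is controlled by $\left\Vert |\phi|^n\right\Vert _{L_\alpha^\psi}$ against binomial coefficients of size $\binom{n+N+\alpha}{n}\approx n^{N+\alpha}$; the triangle inequality for $\left\Vert\cdot\right\Vert _{A_\alpha^\psi}$ together with the convergence of $\sum_n\left\Vert |\phi|^n\right\Vert _{L_\alpha^\psi}$ (and a comparison swallowing the polynomial factor, again using $\Delta^2$) yields that the tail of the series is small, giving the required $o(\cdot)$ rate. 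I expect \textbf{the main obstacle} to be the passage from mere norm-convergence $\left\Vert f_a\circ\phi\right\Vert _{A_{\alpha}^{\psi}}\to 0$ to the precise decay rate in \eqref{fa-comp-compo-op}: one must track how the scale $\psi^{-1}(1/(1-|a|)^{N+\alpha+1})$ interacts with the majorant, and here the $\Delta^2$--condition (through Lemmas \ref{lem|lemma-base-BO-equal-Growth} and \ref{lem|lemma-tech-charac-BO}, which let $\psi^{-1}$ absorb powers) is exactly the tool that reconciles the two rates.
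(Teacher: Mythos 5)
Your starting point---Corollary \ref{cor|corollaire_Berezin_cont_comp} and a pointwise bound on $f_a\circ\phi$---is the paper's, but you then discard exactly the factor that makes the proof work. You cancel $(1-|a|)^{N+\alpha+1}$ against the Berezin kernel to get a bound ``uniform in $a$'', and are consequently forced to manufacture the decay rate in \eqref{fa-comp-compo-op} out of a dominated-convergence and integral-splitting argument. The paper instead keeps the factor: from $\left|1-\left\langle w,a\right\rangle\right|^{2}\geq\left(1-\left|a\right|\right)\left(1-\left|w\right|\right)$ one gets $\left|f_{a}(w)\right|\leq\bigl(2\left(1-\left|a\right|\right)/\left(1-\left|w\right|\right)\bigr)^{N+\alpha+1}$, hence $\left\Vert f_{a}\circ\phi\right\Vert _{A_{\alpha}^{\psi}}\lesssim\left(1-\left|a\right|\right)^{N+\alpha+1}\left\Vert \left(1-\left|\phi\right|\right)^{-(N+\alpha+1)}\right\Vert _{L_{\alpha}^{\psi}}$, and the required little-$o$ is then immediate because $\psi^{-1}(y)=o(y)$ (superlinearity of $\psi$), so that $\psi^{-1}\bigl(1/(1-|a|)^{N+\alpha+1}\bigr)\left(1-\left|a\right|\right)^{N+\alpha+1}\rightarrow0$; the $\Delta^{2}$--condition enters only through Lemma \ref{lem|lemma-tech-charac-BO} to replace the exponent $N+\alpha+1$ by $1$ and land on hypothesis (i). The ``main obstacle'' you identify is thus an artifact of your normalization. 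Worse, the step you rely on to get even qualitative convergence is false in this setting: in $L_{\alpha}^{\psi}$ with $\psi\notin\Delta_{2}$, pointwise convergence to $0$ together with domination by a fixed $g\in L_{\alpha}^{\psi}$ does \emph{not} imply $\left\Vert \cdot\right\Vert _{\psi}$-convergence unless $g\in M_{\alpha}^{\psi}$ (consider $g\chi_{\{g>n\}}$ for $g\in L^{\psi}\setminus M^{\psi}$); this is precisely the $L^{\psi}$ versus $M^{\psi}$ distinction the whole paper turns on, and hypothesis (i) only puts the majorant in $L_{\alpha}^{\psi}$.

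For (ii) your route is also off target. The paper reduces (ii) to (i) in one line, via $1/\left(1-\left|\phi\right|\right)=\sum_{n\geq0}\left|\phi\right|^{n}$ and the triangle inequality, so that $\left\Vert 1/\left(1-\left|\phi\right|\right)\right\Vert _{L_{\alpha}^{\psi}}\leq\sum_{n}\left\Vert \left|\phi\right|^{n}\right\Vert _{L_{\alpha}^{\psi}}$. Your termwise expansion of the Berezin kernel produces coefficients of order $n^{2(N+\alpha+1)-1}$ multiplying $\left\Vert \left|\phi\right|^{n}\right\Vert _{L_{\alpha}^{\psi}}$, and the hypothesis $\sum_{n}\left\Vert \left|\phi\right|^{n}\right\Vert _{L_{\alpha}^{\psi}}<\infty$ does not control $\sum_{n}n^{k}\left\Vert \left|\phi\right|^{n}\right\Vert _{L_{\alpha}^{\psi}}$ (summability of a decreasing sequence does not survive polynomial weights); the appeal to $\Delta^{2}$ to ``swallow the polynomial factor'' is not substantiated and acts on the wrong object ($\psi$, not the sequence of norms). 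Both parts therefore have genuine gaps as written, even though the correct proof is shorter than what you propose.
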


\begin{proof}Observe that
\begin{multline*}
\limsup_{\left|a\right|\rightarrow1}\psi^{-1}\left(\frac{1}{\left(1-\left|a\right|\right)^{N+\alpha+1}}\right)\left\Vert f_{a}\circ\phi\right\Vert _{A_{\alpha}^{\psi}}\\\lesssim
\limsup_{\left|a\right|\rightarrow1}\psi^{-1}\left(\frac{1}{\left(1-\left|a\right|\right)^{N+\alpha+1}}\right)\left(1-\left|a\right|\right)^{N+\alpha+1}\left\Vert \frac{1}{\left(1-\left|\phi\right|\right)^{N+\alpha+1}}\right\Vert _{L_{\alpha}^{\psi}}.\label{eq|1st_eq_applications}
\end{multline*}
Since $\psi$ is an Orlicz function,
\[
\limsup_{\left|a\right|\rightarrow1}\psi^{-1}\left(\frac{1}{\left(1-\left|a\right|\right)^{N+\alpha+1}}\right)\left(1-\left|a\right|\right)^{N+\alpha+1}=0,
\]
and, by Corollary \ref{cor|corollaire_Berezin_cont_comp}, $C_{\phi}$ is
compact on $A_{\alpha}^{\psi}\left(\mathbb{B}_{N}\right)$, $\alpha \geq -1$, whenever $1/\left(1-\left|\phi\right|\right)^{N+\alpha+1}$ is in $L_{\alpha}^{\psi}$.
Now since $\psi$ satisfies the $\Delta^{2}$--condition, Lemma \ref{lem|lemma-tech-charac-BO} makes it clear that (i) is indeed sufficient for the compactness of $C_{\phi}$. To deal with (ii), we may just notice that
$$\left\Vert \frac{1}{1-\left|\phi\left(z\right)\right|}\right\Vert _{L_{\alpha}^{\psi}} \leq \sum_{n=0}^{\infty}\left\Vert |\phi|^{n}\right\Vert _{L_{\alpha}^{\psi}}.$$
\end{proof}

\begin{unremark}(1) Like Theorem \ref{thm|carac-cont-embedding-H-O} and Corollary \ref{cor|corollaire_Berezin_cont_comp}, the previous proposition actually holds for Orlicz functions satisfying the so-called $\nabla_0$--condition.\\
(2) In Section 3.4, we shall see some conditions similar to (i) or (ii) above, but necessary and sufficient for the compactness of $C_{\phi}$ on $A_{\alpha}^{\psi}(\B_N)$ when $\psi$ satisfies the $\Delta^2$--condition.
\end{unremark}

\subsubsection{Composition operators on weighted Banach spaces of $\mathbb{B}_{N}$}
In \cite{bonet_composition_1998}, the authors give a mild condition on a typical weight
$v$ on the unit disc $\mathbb{D}$ to ensure that every composition
operators acting the weighted Banach space $H_{v}^{\infty}\left(\mathbb{D}\right)$
is bounded. We provide below a straightforward proof of this fact
for the specific weights $w^{\psi,\gamma}$ on the unit ball $\mathbb{B}_{N}$.
We need two lemmas and some definition.

For a typical weight $v$ on $\mathbb{B}_{N}$, let $B_{v}={\displaystyle \left\{ f\in H(\mathbb{B}_{N});\,\left|f\right|\leq\frac{1}{v}\right\} }$ denote the
unit ball of $H_{v}^{\infty}\left(\mathbb{B}_{N}\right)$. We define
$\widetilde{v}(z)=\left(\sup_{\left\Vert f\right\Vert _{v}<1}\left\{ \left|f(z)\right|\right\}\right)^{-1}$, $z\in\mathbb{B}_{N},$
and say that $v$ is \emph{essential} if there exist two constants $c$ and $C$ such that
$$c\widetilde{v}(z)\leq v(z) \leq C\widetilde{v}(z)$$
for any $z\in \B_N$.

\begin{lemma}
\label{lem|weight-essential-ball-spec-case}For $N\geq 1$, $\gamma \geq N$ and $\psi$ an Orlicz function, the weight $w^{\psi,\gamma}$ is essential.
\end{lemma}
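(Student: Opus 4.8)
The plan is to compare $v:=w^{\psi,\gamma}$ with its associated weight $\widetilde v$ directly, by exhibiting explicit near-extremal functions, and to exploit only the concavity of $\psi^{-1}$ (so that the argument is valid for an arbitrary Orlicz function, not merely a $\Delta^2$ one). First I would dispose of one of the two inequalities for free: for any weight one has $v\le\widetilde v$ pointwise, since $\|f\|_v\le 1$ forces $|f(z)|\le 1/v(z)$ for every $z$, whence $\sup_{\|f\|_v<1}|f(z)|\le 1/v(z)$ and thus $\widetilde v(z)\ge v(z)$. Consequently the upper estimate in the definition of essentiality holds with $C=1$, and it only remains to find $c>0$ with $c\,\widetilde v\le v$; equivalently, for each $a\in\B_N$ I must produce an $F_a$ in the unit ball of $H_{w^{\psi,\gamma}}^{\infty}(\B_N)$ with $|F_a(a)|\gtrsim 1/v(a)=\psi^{-1}\!\big(1/(1-|a|)^{\gamma}\big)$, uniformly in $a$.

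For $a\in\B_N$, writing $r:=|a|$ and fixing an exponent $\beta\ge\gamma$, I would take
\[
F_a(w)=\psi^{-1}\!\left(\frac{1}{(1-r)^{\gamma}}\right)\left(\frac{1-r}{1-\langle w,a\rangle}\right)^{\beta},\qquad w\in\B_N .
\]
This is holomorphic on $\B_N$ because $\mathrm{Re}\,(1-\langle w,a\rangle)\ge 1-|w||a|>0$, so the principal branch of the power is well defined. Since $1-\langle a,a\rangle=1-r^{2}$, one gets $|F_a(a)|=(1+r)^{-\beta}\psi^{-1}\!\big((1-r)^{-\gamma}\big)\ge 2^{-\beta}/v(a)$, which is exactly the lower bound I want.

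The real content is the verification that $\|F_a\|_v\le 1$, i.e. $|F_a(w)|\le\psi^{-1}\!\big(1/(1-|w|)^{\gamma}\big)$ for all $w$. Here I would use the elementary estimate
\[
|1-\langle w,a\rangle|\ \ge\ 1-|w||a|\ \ge\ \max\{\,1-|w|,\,1-r\,\}
\]
and split into two regions. When $|w|\ge r$ the factor $\big((1-r)/|1-\langle w,a\rangle|\big)^{\beta}\le 1$, so $|F_a(w)|\le\psi^{-1}\!\big((1-r)^{-\gamma}\big)\le\psi^{-1}\!\big((1-|w|)^{-\gamma}\big)$ because $\psi^{-1}$ is increasing. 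When $|w|\le r$, set $\rho=1-r\le s=1-|w|$; then $|F_a(w)|\le(\rho/s)^{\beta}\,\psi^{-1}(\rho^{-\gamma})$, and I invoke the concavity of $\psi^{-1}$ (already used in the proof of Theorem \ref{thm|THM1}): from $\psi^{-1}(\lambda y)\ge\lambda\,\psi^{-1}(y)$ for $0\le\lambda\le 1$, applied with $y=\rho^{-\gamma}$ and $\lambda=(\rho/s)^{\gamma}\le 1$, one obtains $\psi^{-1}(s^{-\gamma})\ge(\rho/s)^{\gamma}\psi^{-1}(\rho^{-\gamma})$. Since $\rho/s\le 1$ and $\beta\ge\gamma$ give $(\rho/s)^{\beta}\le(\rho/s)^{\gamma}$, the desired bound $|F_a(w)|\le\psi^{-1}(s^{-\gamma})$ follows.

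Putting these together gives $F_a$ in the unit ball of $H_{w^{\psi,\gamma}}^{\infty}(\B_N)$ together with $|F_a(a)|\ge 2^{-\beta}/v(a)$, hence $\widetilde v(a)\le 2^{\beta}v(a)$ for every $a$, so that $v=w^{\psi,\gamma}$ is essential with $c=2^{-\beta}$ and $C=1$. The only delicate step is the norm estimate in the region $|w|\le r$, where matching the decay of the Cauchy kernel to the growth of $\psi^{-1}$ is what forces the exponent condition $\beta\ge\gamma$; the hypothesis $\gamma\ge N$ is comfortably enough for this, and in fact it permits taking $F_a$ to be a normalization of the Berezin-type functions $f_a$ already introduced (whose kernel exponent is $2(N+\alpha+1)$, with $\gamma=N+\alpha+1$), so that no new test function is really needed beyond those of the paper.
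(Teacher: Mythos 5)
Your proof is correct, but it follows a genuinely different route from the paper's. The paper's argument is a two-line deduction from Proposition \ref{prop|point-evaluation-bounded-prop}: since $\gamma\geq N$ one can write $\gamma=N+\alpha+1$ with $\alpha\geq-1$, and the two-sided estimate for $\|\delta_z\|_{(A_{\alpha}^{\psi})^{*}}$ then produces, for each $z$, a function $f$ in the unit ball of $A_{\alpha}^{\psi}(\B_N)$ with $|f(z)|\geq 4^{-(N+\alpha+1)}/w^{\psi,\gamma}(z)$; rescaling by $2^{-(N+\alpha+1)}$ puts it in the unit ball of $H_{w^{\psi,\gamma}}^{\infty}$ and gives essentiality with constant $8^{-(N+\alpha+1)}$. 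You instead bypass the Bergman--Orlicz space entirely and exhibit explicit near-extremal functions $F_a$ in the unit ball of $H_{w^{\psi,\gamma}}^{\infty}(\B_N)$, verifying the norm bound by hand from $|1-\langle w,a\rangle|\geq\max\{1-|w|,1-|a|\}$ and the concavity inequality $\psi^{-1}(\lambda y)\geq\lambda\psi^{-1}(y)$. All the steps check out (the only cosmetic point is that $\|F_a\|_{v}\leq 1$ rather than $<1$, so one should pass to $(1-\varepsilon)F_a$ before taking the supremum in the definition of $\widetilde v$). What each approach buys: the paper's is shorter given the machinery already in place and motivates the remark that follows the lemma (a weight is essential as soon as $1/v$ is comparable to the norm of point evaluation on some Banach space embedded in $H_v^{\infty}$); yours is self-contained and, since the only constraint you actually use is $\beta\geq\gamma$ with $\beta$ free, it proves the statement for every $\gamma\geq 1$ (indeed every $\gamma>0$), whereas the paper's reduction needs $\gamma\geq N$ to realize $\gamma$ as $N+\alpha+1$. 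As you note, your $F_a$ are in substance the normalized Berezin-type functions $f_a$, which is exactly what underlies the lower bound of Proposition \ref{prop|point-evaluation-bounded-prop} in the cited references, so the two proofs ultimately rest on the same test functions.
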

\begin{proof}
The inequality $w^{\psi,\gamma}(z)\leq \widetilde{w^{\psi,\gamma}}(z)$ follows from the definition of $\widetilde{w^{\psi,\gamma}}(z)$. For the other one, let us choose $\alpha \geq -1$ such that $\gamma =N+\alpha+1$ (this is possible since $\gamma \geq N$). We now use Proposition
\ref{prop|point-evaluation-bounded-prop} to observe first that $\left\Vert \cdot\right\Vert _{w^{\psi,\gamma}}\leq 2^{N+\alpha+1}\left\Vert \cdot\right\Vert _{A_{\alpha}^{\psi}}$
and second that for every $z\in\mathbb{B}_{N}$, there exists $f$ in the unit ball of $A_{\alpha}^{\psi}\left(\mathbb{B}_{N}\right)$
such that $\left|f(z)\right|\geq\frac{1}{4^{N+\alpha+1}w^{\psi,\gamma}(z)}$;
hence the function $g:=f/2^{N+\alpha+1}$ belongs to the unit ball $B_{w^{\psi,\gamma}}$ of $H_{w^{\psi,\gamma}}^{\infty}$ and satisfies $\left|g(z)\right|\geq\frac{1}{8^{N+\alpha+1}w^{\psi,\gamma}(z)}$.
Then for every $z\in\mathbb{B}_{N}$, $\frac{1}{\widetilde{w^{\psi,\gamma}}(z)}\geq\frac{1}{8^{N+\alpha+1}w^{\psi,\gamma}(z)}$,
as expected.
%
\end{proof}
Note that the previous proof underlines the fact that a weight $v$ is essential if and only if there exists a Banach space of holomorphic functions $X$ on $\B_N$, continuously embedded into $H_v^{\infty}(\B_N)$, such that for any $z\in \B_N$, $1/v(z)$ is the norm of the evaluation at $z$ on $X$, up to some constants independant of $z$.

More generally, some conditions for a weight on $\mathbb{D}$ to be essential were exhibited in \cite{bierstedt_associated_1998} and, more recently, some general characterizations of such weights have been given in \cite{abakumov_moduli_2015}.
\begin{lemma}
\label{lem|lemme-sup-psi-frac-bounded}Let $\psi$ be an Orlicz function,
$\gamma\geq1$ and $\phi$ a holomorphic self-map of $\mathbb{B}_{N}$.
Then
\[
\sup_{z\in\mathbb{B}_{N}}\frac{\psi^{-1}\left({\displaystyle \frac{1}{\left(1-\left|\phi(z)\right|\right)^{\gamma}}}\right)}{\psi^{-1}\left({\displaystyle \frac{1}{\left(1-\left|z\right|\right)^{\gamma}}}\right)}<+\infty.
\]
\end{lemma}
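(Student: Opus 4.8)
The plan is to reduce the supremum to the behaviour near the boundary and then exploit the Schwarz--Pick type control that a holomorphic self-map exerts on the quantity $1-|\phi(z)|$. First I would observe that on any compact subset of $\mathbb{B}_N$ the ratio is automatically bounded: both $1-|\phi(z)|$ and $1-|z|$ stay bounded away from $0$, so $\psi^{-1}$ of the corresponding expressions stays in a bounded interval where it is continuous and strictly positive. Hence the only issue is the behaviour as $|z|\to 1$, and the whole difficulty is to bound the ratio there.

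The key analytic input is the Schwarz--Pick inequality on the ball, which yields a constant $c_\phi>0$ (depending on $\phi$, in particular on $\phi(0)$) such that
\[
1-|\phi(z)|\geq c_\phi\,(1-|z|),\qquad z\in\mathbb{B}_N.
\]
Equivalently one may invoke the standard estimate $\dfrac{1-|z|}{1-|\phi(z)|}\leq \dfrac{1+|\phi(0)|}{1-|\phi(0)|}$. Raising to the power $\gamma$ and using that $t\mapsto t^{-\gamma}$ is decreasing, this gives a constant $D\geq 1$ with
\[
\frac{1}{\left(1-|\phi(z)|\right)^{\gamma}}\leq \frac{D}{\left(1-|z|\right)^{\gamma}}
\]
for all $z\in\mathbb{B}_N$. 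Thus the numerator's argument is dominated by a fixed multiple of the denominator's argument, and the problem becomes one of comparing $\psi^{-1}(D\,t)$ with $\psi^{-1}(t)$ for large $t$, where $t=(1-|z|)^{-\gamma}\to\infty$.

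The final step is to control $\psi^{-1}(Dt)/\psi^{-1}(t)$ for large $t$. Since $\psi$ is convex with $\psi(0)=0$, its inverse $\psi^{-1}$ is concave, and concavity together with $\psi^{-1}(0)=0$ gives the subadditivity-type bound $\psi^{-1}(Dt)\leq D\,\psi^{-1}(t)$ for every $D\geq 1$ and every $t\geq 0$. Indeed, for $D\geq 1$ concavity yields $\psi^{-1}(t)=\psi^{-1}\!\big(\tfrac1D\cdot Dt+(1-\tfrac1D)\cdot 0\big)\geq \tfrac1D\psi^{-1}(Dt)$. Combining the monotonicity of $\psi^{-1}$ with the displayed argument inequality, I obtain for $|z|$ close to $1$
\[
\frac{\psi^{-1}\!\left(1/(1-|\phi(z)|)^{\gamma}\right)}{\psi^{-1}\!\left(1/(1-|z|)^{\gamma}\right)}
\leq \frac{\psi^{-1}\!\left(D/(1-|z|)^{\gamma}\right)}{\psi^{-1}\!\left(1/(1-|z|)^{\gamma}\right)}\leq D,
\]
which is a finite bound independent of $z$. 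Together with the elementary boundedness on compacta noted at the outset, this proves the supremum is finite. I expect the main obstacle to be purely a matter of citing or establishing the correct Schwarz--Pick estimate $1-|\phi(z)|\gtrsim 1-|z|$ on $\mathbb{B}_N$; the Orlicz-function part is handled cleanly by concavity of $\psi^{-1}$ and requires no growth hypothesis on $\psi$, consistent with the statement holding for an arbitrary Orlicz function.
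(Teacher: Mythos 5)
Your proof is correct and takes essentially the same approach as the paper's: the paper likewise reduces everything to the pointwise estimate $1-|\phi(z)|\gtrsim_{\phi(0)}1-|z|$ (which it derives explicitly from the automorphism identity $1-|\varphi_{a}(z)|^{2}=\frac{(1-|a|^{2})(1-|z|^{2})}{|1-\langle a,z\rangle|^{2}}$ and the Schwarz lemma applied to $\varphi_{\phi(0)}\circ\phi$, rather than citing the Schwarz--Pick estimate) and then pulls the resulting multiplicative constant out of $\psi^{-1}$ using its concavity, exactly as you do. The only cosmetic differences are that you quote the boundary estimate while the paper proves it, and that your preliminary reduction to a neighbourhood of the boundary is unnecessary since the estimate is global.
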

\begin{proof}
Let $a=\phi(0)$ and let $\varphi_{a}$ be an automorphism of $\mathbb{B}_{N}$
which vanishes at $a$. Since $\varphi_{a}$ is an involution, $\phi=\varphi_{a}\circ\varphi_{a}\circ\phi$,
and $\varphi_{a}\circ\phi(0)=0$. Recall that for any automorphism
$\varphi_{a}$ of $\mathbb{B}_{N}$, we have
\[
1-\left|\varphi_{a}(z)\right|^{2}=\frac{\left(1-\left|a\right|^{2}\right)\left(1-\left|z\right|^{2}\right)}{\left|1-\left\langle a,z\right\rangle \right|^{2}}
\]
for every $z\in\mathbb{B}_{N}$ \cite[Theorem 2.2.2]{rudin_function_1980}. Then,
applying the Schwarz lemma to $\varphi_{a}\circ\phi$ and using the
convexity of $\psi$, it follows that
\begin{eqnarray*}
\psi^{-1}\left({\displaystyle \frac{1}{\left(1-\left|\phi(z)\right|\right)^{\gamma}}}\right) & \leq & 2^{\gamma}\psi^{-1}\left(\frac{1}{\left(1-\left|\varphi_{a}\circ\varphi_{a}\circ\phi(z)\right|^{2}\right)^{\gamma}}\right)\\
 & = & 2^{\gamma}\psi^{-1}\left(\left(\frac{\left|1-\left\langle a,\varphi_{a}\circ\phi(z)\right\rangle \right|^{2}}{\left(1-\left|a\right|^{2}\right)\left(1-\left|\varphi_{a}\circ\phi(z)\right|^{2}\right)}\right)^{\gamma}\right)\\
 & \leq & \left(\frac{2}{1-\left|a\right|}\right)^{\gamma}\psi^{-1}\left(\frac{1}{\left(1-\left|z\right|\right)^{\gamma}}\right).
\end{eqnarray*}

\end{proof}
Let $f\in H_{w^{\psi,\gamma}}^{\infty}\left(\mathbb{B}_{N}\right)$.
By Lemma \ref{lem|lemme-sup-psi-frac-bounded} we have $w^{\psi,\gamma}(z)\leq Cw^{\psi,\gamma}(\phi(z))$
for some constant $C>0$. Thus
\begin{equation}\label{todaybis}
\left|f\circ\phi(z)\right|w^{\psi,\gamma}(z)=\left|f\circ\phi(z)\right|w^{\psi,\gamma}(\phi(z))\frac{w^{\psi,\gamma}(z)}{w^{\psi,\gamma}(\phi(z))}\leq C\left\Vert f\right\Vert _{H_{w^{\psi,\gamma}}^{\infty}}
\end{equation}
and we deduce:
\begin{proposition}
\label{prop|prop-Cphi-bounded-growth-space}Let $\psi$ be an Orlicz
function and $\gamma\geq 1$. For every holomorphic self-map $\phi$ of $\mathbb{B}_{N}$, the composition operator $C_{\phi}$ is bounded on $H_{w^{\psi,\gamma}}^{\infty}\left(\mathbb{B}_{N}\right)$.
\end{proposition}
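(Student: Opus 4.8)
The plan is to deduce boundedness directly from the weight comparison already contained in Lemma~\ref{lem|lemme-sup-psi-frac-bounded}, with no further analytic input. First I would restate that lemma in terms of the weight itself: since $w^{\psi,\gamma}(z)^{-1}=\psi^{-1}\big(1/(1-|z|)^{\gamma}\big)$, the quotient inside the supremum of Lemma~\ref{lem|lemme-sup-psi-frac-bounded} is precisely $w^{\psi,\gamma}(z)/w^{\psi,\gamma}(\phi(z))$. Hence the lemma furnishes a finite constant $C>0$ with
\[
w^{\psi,\gamma}(z)\le C\,w^{\psi,\gamma}(\phi(z))\qquad\text{for all }z\in\B_N.
\]

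Next, fix $f\in H_{w^{\psi,\gamma}}^{\infty}(\B_N)$. By the very definition of the growth-space norm in \eqref{growth-space}, one has $|f(\phi(z))|\,w^{\psi,\gamma}(\phi(z))\le\|f\|_{H_{w^{\psi,\gamma}}^{\infty}}$ for every $z$. Multiplying and dividing by $w^{\psi,\gamma}(\phi(z))$ and invoking the comparison from the previous step yields
\[
|f\circ\phi(z)|\,w^{\psi,\gamma}(z)=|f(\phi(z))|\,w^{\psi,\gamma}(\phi(z))\,\frac{w^{\psi,\gamma}(z)}{w^{\psi,\gamma}(\phi(z))}\le C\,\|f\|_{H_{w^{\psi,\gamma}}^{\infty}}.
\]
Taking the supremum over $z\in\B_N$ then gives $\|C_\phi f\|_{H_{w^{\psi,\gamma}}^{\infty}}\le C\,\|f\|_{H_{w^{\psi,\gamma}}^{\infty}}$, which is exactly the asserted boundedness, with operator norm at most $C$.

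There is no genuine obstacle at the level of the proposition: all the work has been pushed into Lemma~\ref{lem|lemme-sup-psi-frac-bounded}, whose proof combines the automorphism identity for $1-|\varphi_a(z)|^2$, the Schwarz lemma applied to $\varphi_a\circ\phi$, and the convexity of $\psi$. The only point worth flagging is that the admissible constant is $C=(2/(1-|a|))^{\gamma}$ with $a=\phi(0)$, so it depends on $\phi$ and blows up as $\phi(0)\to\partial\B_N$; this is harmless for the boundedness of a single composition operator but shows that the estimate is not uniform over all holomorphic self-maps.
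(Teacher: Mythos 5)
Your argument is exactly the paper's: the inequality $w^{\psi,\gamma}(z)\le C\,w^{\psi,\gamma}(\phi(z))$ from Lemma~\ref{lem|lemme-sup-psi-frac-bounded} combined with the multiply-and-divide manipulation in \eqref{todaybis} is precisely how the proposition is derived there. Your closing remark that the constant depends on $\phi(0)$ and is not uniform over all symbols is accurate and consistent with the proof of the lemma.
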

Since $A_{\alpha}^{\psi}\left(\mathbb{B}_{N}\right)$ coincides with
$H_{w^{\psi}}^{\infty}\left(\mathbb{B}_{N}\right)$ whenever $\psi$
satisfies the $\Delta^{2}$--condition (Theorem \ref{thm|THM1}), the first assertion of the
following corollary immediately proceeds from the previous proposition.
\begin{corollary}
\label{coro|Cphi-bounded-BO-Delta2}Let $\alpha>-1$ and $\psi$ be
an Orlicz function satisfying the $\Delta^{2}$--condition. For every holomorphic self-map $\phi$ of $\B_N$, $C_{\phi}$ is bounded on $A_{\alpha}^{\psi}\left(\mathbb{B}_{N}\right)$. Moreover
\begin{equation}\label{norm-cphi-small}\left\Vert C_{\phi}\right\Vert _{A_{\alpha}^{\psi}\rightarrow A_{\alpha}^{\psi}}\simeq \sup_{z\in\mathbb{B}_{N}}\frac{\psi^{-1}\left({\displaystyle \frac{1}{\left(1-\left|\phi(z)\right|\right)^{N+\alpha+1}}}\right)}{\psi^{-1}\left({\displaystyle \frac{1}{\left(1-\left|z\right|\right)^{N+\alpha+1}}}\right)}\simeq \sup_{z\in\mathbb{B}_{N}}\frac{\psi^{-1}\left(\displaystyle \frac{1}{1-\left|\phi(z)\right|}\right)}{\psi^{-1}\left({\displaystyle \frac{1}{1-\left|z\right|}}\right)}.
\end{equation}
\end{corollary}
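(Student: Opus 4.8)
The plan is to transfer the whole question to the weighted Banach space picture of Theorem \ref{thm|THM1} and then to exploit the essentiality of a well-chosen weight. The boundedness of $C_\phi$ is immediate: since $1\geq 1$, Proposition \ref{prop|prop-Cphi-bounded-growth-space} (with $\gamma=1$) gives that $C_\phi$ is bounded on $H^\infty_{w^\psi}=H^\infty_{w^{\psi,1}}$, and Theorem \ref{thm|THM1} identifies this space with $A_\alpha^\psi(\B_N)$ up to an equivalent norm. The real content is therefore the two-sided estimate \eqref{norm-cphi-small}. The key idea is that one should \emph{not} work with $w^\psi=w^{\psi,1}$ directly, since this weight is not known to be essential when $N\geq 2$, but rather with $w^{\psi,\gamma}$ for the exponent $\gamma=N+\alpha+1$. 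Indeed $\alpha>-1$ forces $\gamma>N$, so Lemma \ref{lem|weight-essential-ball-spec-case} applies and $w^{\psi,\gamma}$ is essential, while the discussion preceding Theorem \ref{thm|THM1} shows $A_\alpha^\psi(\B_N)=H^\infty_{w^{\psi,\gamma}}(\B_N)$ with equivalent norms. As equivalent norms alter an operator norm only by a fixed multiplicative constant, it suffices to estimate $\|C_\phi\|_{H^\infty_{w^{\psi,\gamma}}\to H^\infty_{w^{\psi,\gamma}}}$.

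Next I would establish both inequalities in the first $\simeq$ of \eqref{norm-cphi-small}. Writing $\gamma=N+\alpha+1$ and noting the elementary identity
\[
\frac{w^{\psi,\gamma}(z)}{w^{\psi,\gamma}(\phi(z))}=\frac{\psi^{-1}\!\left(1/(1-|\phi(z)|)^{\gamma}\right)}{\psi^{-1}\!\left(1/(1-|z|)^{\gamma}\right)},
\]
the upper bound is exactly the computation \eqref{todaybis}: factoring $|f(\phi(z))|\,w^{\psi,\gamma}(z)=\bigl(|f(\phi(z))|\,w^{\psi,\gamma}(\phi(z))\bigr)\,w^{\psi,\gamma}(z)/w^{\psi,\gamma}(\phi(z))$ and taking the supremum over $z$ yields $\|C_\phi\|_{H^\infty_{w^{\psi,\gamma}}}\leq\sup_z w^{\psi,\gamma}(z)/w^{\psi,\gamma}(\phi(z))$. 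For the reverse inequality I would use the test functions constructed in the proof of Lemma \ref{lem|weight-essential-ball-spec-case}: for each fixed $z_0$, applying that construction at the point $\phi(z_0)$ produces a $g$ in the unit ball of $H^\infty_{w^{\psi,\gamma}}$ with $|g(\phi(z_0))|\gtrsim 1/w^{\psi,\gamma}(\phi(z_0))$ (with explicit constant $8^{-(N+\alpha+1)}$); then $\|C_\phi\|\geq \|C_\phi g\|_{w^{\psi,\gamma}}\geq |g(\phi(z_0))|\,w^{\psi,\gamma}(z_0)\gtrsim w^{\psi,\gamma}(z_0)/w^{\psi,\gamma}(\phi(z_0))$, and a supremum over $z_0$ finishes the lower bound. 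Transferring back through the norm equivalence gives the first $\simeq$.

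It then remains to replace the exponent $\gamma=N+\alpha+1$ by $1$, that is, to show the two suprema in \eqref{norm-cphi-small} are comparable (both are finite by Lemma \ref{lem|lemme-sup-psi-frac-bounded}). This is where Lemma \ref{lem|lemma-tech-charac-BO} enters: with $a=\gamma\geq 1$ it supplies a constant $C>0$ and a threshold $r_0\in(0,1)$ such that $\psi^{-1}(1/(1-|w|))\leq \psi^{-1}(1/(1-|w|)^{\gamma})\leq C\,\psi^{-1}(1/(1-|w|))$ whenever $|w|\geq r_0$. Applying this separately to $w=z$ and $w=\phi(z)$, I would check the pointwise comparison
\[
\frac{\psi^{-1}\!\left(1/(1-|\phi(z)|)\right)}{\psi^{-1}\!\left(1/(1-|z|)\right)}\ \simeq\ \frac{\psi^{-1}\!\left(1/(1-|\phi(z)|)^{\gamma}\right)}{\psi^{-1}\!\left(1/(1-|z|)^{\gamma}\right)},\qquad z\in\B_N,
\]
with constants depending only on $\psi,\alpha,N$. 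On $\{|z|\geq r_0,\ |\phi(z)|\geq r_0\}$ the displayed inequalities give this directly; in the complementary region the factor $\psi^{-1}(1/(1-t))$ (resp. its $\gamma$-power version) ranges only over the bounded interval $[\psi^{-1}(1),\psi^{-1}(1/(1-r_0)^{\gamma}))$ of positive numbers, so the offending numerator or denominator is trapped between fixed positive constants and is again comparable to its counterpart. Taking suprema yields the second $\simeq$.

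The main obstacle, and the conceptual point, is the lower bound in the first estimate: a composition operator on a growth space admits a clean lower norm bound only when the defining weight is essential, and $w^{\psi,1}$ is not known to be essential for $N\geq 2$. The detour through the larger exponent $\gamma=N+\alpha+1$, for which Lemma \ref{lem|weight-essential-ball-spec-case} \emph{does} guarantee essentiality, is exactly what makes the argument go through, with Lemma \ref{lem|lemma-tech-charac-BO} as the $\Delta^2$ device that lets one return to the exponent $1$ in the final formula. A secondary, routine point is to keep every comparison constant independent of $\phi$; the case analysis above does so since it uses no property of $\phi$ beyond $\phi(\B_N)\subseteq\B_N$.
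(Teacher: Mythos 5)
Your proposal is correct and follows essentially the same route as the paper: the paper's (very terse) proof likewise derives boundedness from Theorem \ref{thm|THM1} and Proposition \ref{prop|prop-Cphi-bounded-growth-space}, obtains the first estimate in \eqref{norm-cphi-small} from \eqref{todaybis} together with the essentiality of $w^{\psi,N+\alpha+1}$ (Lemma \ref{lem|weight-essential-ball-spec-case}), and passes to exponent $1$ via Lemma \ref{lem|lemma-tech-charac-BO}. You have merely made explicit the lower-bound test-function argument and the near-boundary case analysis that the paper leaves implicit.
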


\begin{proof}Only \eqref{norm-cphi-small} needs to be checked. Now, starting with \eqref{todaybis}, the first estimate is a consequence of Theorem \ref{thm|THM1} and Lemma \ref{lem|weight-essential-ball-spec-case} (applied to $\gamma = N+\alpha+1$) and the second then follows from Lemma \ref{lem|lemma-tech-charac-BO}.
\end{proof}
\begin{unremark}(1) Using Carleson measure theorems, it was already stated in \cite{charpentier_composition_2011,charpentier_composition_2013}
that every composition operator is bounded on $A_{\alpha}^{\psi}\left(\mathbb{B}_{N}\right)$
and $H^{\psi}\left(\mathbb{B}_{N}\right)$ whenever $\psi$ satisfies
the $\Delta^{2}$--condition. Yet, whatever the Orlicz function, no explicit estimate of the norm of a composition operator where known for $N> 1$.\\
(2) We will see in the next paragraph that the estimates in \eqref{norm-cphi-small} does not hold whenever $\psi$ does not satisfy the $\Delta^2$--condition; indeed, while the right-hand side of the estimate is always bounded (Lemma \ref{lem|lemme-sup-psi-frac-bounded}), we will see that, if $\psi \notin \Delta^2$, then there exist unbounded composition operators.\\
(3) More generally, it is easily seen that the norm of $C_{\phi}$ on $H_v^{\infty}$, $v$ a typical radial weight, is equal to $\displaystyle{\sup_{z\in \B_N}\frac{v(z)}{\widetilde{v}(\phi(z))}}$, where $\widetilde{v}$ is defined similarly as in Lemma \ref{lem|weight-essential-ball-spec-case} (see \cite{bonet_essential_1999,bonet_composition_1998} for $N=1$) and can be replaced with $v$, up to some constant, whenever $v$ is essential.
\end{unremark}
In the next paragraph, we will show that the $\Delta^{2}$--condition
in fact characterizes these extreme behaviors.

\subsection{Order boundedness of composition operators}

Let us briefly recall that an operator $T$ from a Banach space $X$
to a Banach lattice $Y$ is order bounded into some subspace $Z$
of $Y$ if there exists $y\in Z$ positive such that $\left|Tx\right|\leq y$
for every $x$ in the unit ball of $X$. The following lemma is immediate and its proof is left to the reader.
\begin{lemma}\label{lem-OB-gene}Let $Y$ be a Banach lattice and $X$ a closed sublattice of $Y$. If the canonical embedding $I$ from $X$ into $Y$ is order-bounded into $Y$, then a linear map $T:X\rightarrow Y$ with $T(X)\subset X$ is order-bounded into $Y$ if and only if it is bounded (from $X$ into $X$).
\end{lemma}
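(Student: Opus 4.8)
The plan is to prove the two implications separately, observing at the outset that the implication ``order-bounded into $Y$ $\Rightarrow$ bounded'' is entirely general, while the converse is exactly where the hypothesis on the embedding $I$ is used. First I would record the two structural facts that both directions rest on: since $X$ is a closed sublattice of $Y$, the norm of $X$ is the restriction of the norm of $Y$, so that $\|u\|_X=\|u\|_Y$ for $u\in X$; and $\|\cdot\|_Y$ is a lattice norm, meaning $|a|\leq|b|$ implies $\|a\|_Y\leq\|b\|_Y$.

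For the forward implication, suppose $T$ is order-bounded into $Y$, so that there is a positive $y\in Y$ with $|Tx|\leq y$ for every $x$ in the unit ball $B_X$ of $X$. Then for such $x$ I would simply estimate $\|Tx\|_X=\|Tx\|_Y=\||Tx|\|_Y\leq\|y\|_Y$, which shows that $T$ is bounded from $X$ into $X$ with $\|T\|\leq\|y\|_Y$. This step uses only the lattice-norm property and not the order-boundedness of $I$.

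For the converse I would invoke the hypothesis: the order-boundedness of $I$ provides a positive $y_0\in Y$ such that $|u|\leq y_0$ for every $u\in B_X$. Assuming $T\colon X\to X$ bounded, if $\|T\|=0$ then $T=0$ is trivially order-bounded (take $y=y_0$). Otherwise, for $x\in B_X$ one has $Tx\in X$ with $\|Tx\|_X\leq\|T\|$, hence $Tx/\|T\|\in B_X$; applying the bound for the embedding gives $|Tx/\|T\||\leq y_0$, that is $|Tx|\leq\|T\|\,y_0$. Setting $y:=\|T\|\,y_0$, which is positive in $Y$, yields $|Tx|\leq y$ for all $x\in B_X$, i.e.\ $T$ is order-bounded into $Y$.

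I do not expect any genuine obstacle, as the statement is elementary. The only points deserving a word are that $X$ inherits its norm from $Y$ (so that $\|Tx\|_X\leq\|T\|$ really places $Tx/\|T\|$ in $B_X$) and the degenerate case $\|T\|=0$; everything else is a direct consequence of the lattice-norm inequalities.
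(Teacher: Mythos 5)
Your proof is correct; the paper itself declares this lemma ``immediate'' and leaves the proof to the reader, and your argument (lattice-norm inequality for the forward direction, rescaling into the unit ball plus the order bound for the embedding $I$ for the converse, with the degenerate case $\|T\|=0$ handled separately) is exactly the intended one. No issues.
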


In the sequel $X$ will stand for $A_{\alpha}^{\psi}\left(\mathbb{B}_{N}\right)$, $Y$ for $L_{\alpha}^{\psi}$ and $Z$ for either $Y$ or $M_{\alpha}^{\psi}$, with $\alpha\geq-1$. Corollary \ref{cor|coro-evaluation-norm-Lpsi-Delta2} can be reformulated
by saying that, if $\psi$ satisfies the $\Delta^2$--condition, then the canonical embedding $I$ from $A_{\alpha}^{\psi}\left(\mathbb{B}_{N},v_{\alpha}\right)$
into $L^{\psi}\left(\mathbb{B}_{N},v_{\alpha}\right)$ is order bounded
into $L^{\psi}\left(\mathbb{B}_{N},v_{\alpha}\right)$. Thus, Lemma \ref{lem-OB-gene} together with Corollary \ref{coro|Cphi-bounded-BO-Delta2} already yields:
\begin{theorem}
\label{thm|O-B-always_B-O}Let $\alpha>-1$ and let $\psi$ be an
Orlicz function satisfying the $\Delta^{2}$--condition. Every composition
operator acting on $A_{\alpha}^{\psi}\left(\mathbb{B}_{N}\right)$
is order bounded into $L^{\psi}\left(\mathbb{B}_{N},v_{\alpha}\right)$.\end{theorem}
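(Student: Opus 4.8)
The plan is to read the statement off from the two ingredients assembled immediately above it, combined through Lemma~\ref{lem-OB-gene}. Concretely, I would take $X=A_{\alpha}^{\psi}(\B_N)$, $Y=L_{\alpha}^{\psi}$ and $T=C_{\phi}$, and verify that both hypotheses of the lemma are in force: that the canonical inclusion $I$ is order bounded into $L_{\alpha}^{\psi}$, and that $C_{\phi}$ is bounded on $A_{\alpha}^{\psi}(\B_N)$.

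First I would spell out why $I$ is order bounded into $L_{\alpha}^{\psi}$. For every $z\in\B_N$ and every $f$ in the unit ball of $A_{\alpha}^{\psi}(\B_N)$ one has $|f(z)|=|\delta_z(f)|\leq\|\delta_z\|_{(A_{\alpha}^{\psi})^*}$, so the positive function $g(z):=\|\delta_z\|_{(A_{\alpha}^{\psi})^*}$ satisfies $\sup_{\|f\|_{A_{\alpha}^{\psi}}\leq1}|If|\leq g$ pointwise on $\B_N$. Since $\psi$ satisfies the $\Delta^{2}$--condition, Corollary~\ref{cor|coro-evaluation-norm-Lpsi-Delta2} gives precisely $g\in L_{\alpha}^{\psi}$, which is the desired order boundedness of $I$ (this is the reformulation of that corollary announced just before the statement).

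Next I would invoke Corollary~\ref{coro|Cphi-bounded-BO-Delta2}: because $\psi\in\Delta^{2}$, every $C_{\phi}$ is bounded on $A_{\alpha}^{\psi}(\B_N)$, with of course $C_{\phi}(A_{\alpha}^{\psi})\subset A_{\alpha}^{\psi}$. Then Lemma~\ref{lem-OB-gene} applies and yields that $C_{\phi}$ is order bounded into $L_{\alpha}^{\psi}$. It is worth making the underlying mechanism explicit: if $M$ denotes the operator norm of $C_{\phi}$ on $A_{\alpha}^{\psi}(\B_N)$, then for $f$ in the unit ball of $A_{\alpha}^{\psi}(\B_N)$ the function $C_{\phi}f/M$ again lies in that unit ball, whence $|C_{\phi}f|\leq M\,g$, and $Mg\in L_{\alpha}^{\psi}$ is a positive dominating function witnessing order boundedness.

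There is essentially no serious obstacle here, since everything has been prepared by Corollaries~\ref{cor|coro-evaluation-norm-Lpsi-Delta2} and~\ref{coro|Cphi-bounded-BO-Delta2}; the only point deserving a moment's care is that one should use the ``bounded $\Rightarrow$ order bounded'' direction of Lemma~\ref{lem-OB-gene}, which requires only that $I$ be order bounded into $L_{\alpha}^{\psi}$ and that $C_{\phi}$ map $A_{\alpha}^{\psi}$ boundedly into itself, the ambient lattice order on $L_{\alpha}^{\psi}$ serving merely to express the domination $|C_{\phi}f|\leq Mg$.
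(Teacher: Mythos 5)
Your proof is correct and follows exactly the paper's route: the paper likewise combines Corollary~\ref{cor|coro-evaluation-norm-Lpsi-Delta2} (order boundedness of the canonical embedding $I$), Corollary~\ref{coro|Cphi-bounded-BO-Delta2} (boundedness of every $C_{\phi}$), and Lemma~\ref{lem-OB-gene} to conclude. Your explicit unpacking of the domination $|C_{\phi}f|\leq M\,g$ is just the content of that lemma spelled out, so nothing is missing.
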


\begin{remark}Since $H^{\psi}(\B_N)$ is canonically embedded into $A_{\alpha}^{\psi}(\B_N)$, it immediately follows from the previous result that the canonical injection $H^{\psi}(\B_N)\hookrightarrow A_{\alpha}^{\psi}(\B_N)$ is order bounded into $L_{\alpha}^{\psi}$ for any $\alpha >-1$. Yet, as we will see (Remark \ref{rem|rem-HO-growth}), the canonical embedding from $H^{\psi}(\B_N)$ into $L_{\alpha}^{\psi}$ with $\alpha =-1$ is not order-bounded into $L_{\alpha}^{\psi}$, and Theorem \ref{thm|O-B-always_B-O} does not hold for Hardy-Orlicz spaces.
\end{remark}

Moreover, proceeding as in \cite[page 18, the Remark]{lefevre_compact_2009}, the next proposition can be easily checked.

\begin{proposition}\label{OBMpsiimpliescomp}Let $\alpha\geq -1$ and let $\psi$ be an
Orlicz function. Every composition operator acting on $A_{\alpha}^{\psi}\left(\mathbb{B}_{N}\right)$ which is order bounded into $M_{\alpha}^{\psi}$ is compact.
\end{proposition}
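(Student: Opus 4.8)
The plan is to extract compactness from the single fact that order boundedness into the \emph{Morse--Transue} space provides a dominating function $g$ whose defining feature is that $\psi(\lambda g)$ is integrable for \emph{every} $\lambda>0$; this is exactly the equi-integrability that the Luxemburg norm needs in order to detect norm convergence. First I would note that order boundedness into $M_\alpha^\psi\subset L_\alpha^\psi$ makes $C_\phi$ bounded on $A_\alpha^\psi(\mathbb{B}_N)$ (if $g\ge0$ is an order bound then $\Vert C_\phi f\Vert_{L_\alpha^\psi}\le\Vert g\Vert_{L_\alpha^\psi}\Vert f\Vert_{A_\alpha^\psi}$, in the spirit of Lemma \ref{lem-OB-gene}). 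By Proposition \ref{prop|point-evaluation-bounded-prop} the unit ball of $A_\alpha^\psi(\mathbb{B}_N)$ is uniformly bounded on each compact subset of $\mathbb{B}_N$, hence a normal family, so by Montel's theorem it is relatively compact for the metrisable topology $\tau_{\mathrm{co}}$ of uniform convergence on compacta. A Fatou argument (lower semicontinuity of the norm under pointwise limits) lets one identify $\tau_{\mathrm{co}}$-limits inside the closed ball, and reduces the statement to the implication: \emph{if $(f_n)$ is bounded in $A_\alpha^\psi(\mathbb{B}_N)$ and $f_n\to0$ in $\tau_{\mathrm{co}}$, then $\Vert C_\phi f_n\Vert_{A_\alpha^\psi}\to0$}. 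After normalising we may assume $\Vert f_n\Vert_{A_\alpha^\psi}\le1$, so that $|f_n\circ\phi|\le g$ a.e. for the fixed bound $g\in M_\alpha^\psi$.

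In the Bergman case $\alpha>-1$ the argument is then a one-line dominated convergence. Since $f_n\to0$ in $\tau_{\mathrm{co}}$ we have $f_n\circ\phi\to0$ pointwise on $\mathbb{B}_N$, while for every $\lambda>0$ one has $\psi(\lambda|f_n\circ\phi|)\le\psi(\lambda g)$ with $\psi(\lambda g)\in L^1(v_\alpha)$ \emph{precisely because} $g$ belongs to the Morse--Transue space. The Dominated Convergence Theorem gives $\int_{\mathbb{B}_N}\psi(\lambda|f_n\circ\phi|)\,dv_\alpha\to0$ for every $\lambda$; taking $\lambda=1/\varepsilon$ shows that eventually $\Vert f_n\circ\phi\Vert_{L_\alpha^\psi}\le\varepsilon$, i.e. $\Vert C_\phi f_n\Vert_{A_\alpha^\psi}\to0$.

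The main obstacle is the Hardy case $\alpha=-1$: the norm and the order bound are now read on $\mathbb{S}_N$ through the boundary values $(f_n\circ\phi)^*$, and $\tau_{\mathrm{co}}$-convergence in the interior does \emph{not} force $(f_n\circ\phi)^*\to0$ $\sigma_N$-a.e. — on the set $\{|\phi^*|=1\}$ the traces $f_n^*$ need not vanish — so dominated convergence is unavailable. I would instead argue by equi-integrability on the pull-back measure $\mu_\phi$ living on $\overline{\mathbb{B}_N}$. Setting $G(w)=\sup_{\Vert f\Vert_{H^\psi}\le1}|\widehat f(w)|$, where $\widehat f$ denotes $f$ on $\mathbb{B}_N$ and its trace $f^*$ on $\mathbb{S}_N$, one has $|\widehat{f_n}|\le G$ everywhere, and order boundedness into $M^\psi(\mathbb{S}_N,\sigma_N)$ says exactly that $G\circ\phi^*\in M^\psi(\mathbb{S}_N,\sigma_N)$, that is $\int_{\overline{\mathbb{B}_N}}\psi(\lambda G)\,d\mu_\phi<\infty$ for every $\lambda>0$. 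Through the boundary change of variables underlying the Carleson-measure formalism of Theorem \ref{thm|carac-cont-embedding-H-O}, $\int_{\mathbb{S}_N}\psi(\lambda|(f_n\circ\phi)^*|)\,d\sigma_N=\int_{\overline{\mathbb{B}_N}}\psi(\lambda|\widehat{f_n}|)\,d\mu_\phi$. Splitting at a radius $r_0$, the outer part over $\{|w|>r_0\}$ is at most $\int_{\{|w|>r_0\}}\psi(\lambda G)\,d\mu_\phi$, which tends to $0$ as $r_0\to1$ \emph{uniformly in $n$} by absolute continuity of a finite integral, while the inner part over the compact $\{|w|\le r_0\}$ tends to $0$ as $n\to\infty$ since $f_n\to0$ uniformly there and $\mu_\phi$ is finite. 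Choosing first $r_0$ and then $n$ yields $\Vert C_\phi f_n\Vert_{H^\psi}\to0$.

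The place where $M_\alpha^\psi$ — rather than merely $L_\alpha^\psi$ — is indispensable is exactly this last estimate: only the Morse--Transue membership renders $\psi(\lambda g)$ (resp. $\psi(\lambda G)$) integrable for \emph{all} $\lambda$ simultaneously, which is what controlling the Luxemburg norm at every scale requires. I expect the genuinely delicate point to be the Hardy boundary analysis of the third paragraph — replacing a.e. convergence, which fails, by uniform control of the near-boundary tail of the pull-back measure.
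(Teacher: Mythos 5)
Your Bergman half ($\alpha>-1$) is correct and is essentially the intended argument: the paper proves this proposition by referring to the Remark on page 18 of \cite{lefevre_compact_2009}, which is precisely the dominated convergence argument you give, the whole point being that $g\in M_{\alpha}^{\psi}$ makes $\psi(\lambda g)$ integrable for \emph{every} $\lambda>0$. The preliminary reduction to sequences in the unit ball converging to $0$ uniformly on compacta is standard and fine.

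The Hardy half ($\alpha=-1$), however, has a genuine gap, and it sits exactly at the point you flag as delicate. The function $G(w)=\sup_{\Vert f\Vert_{H^{\psi}}\le1}|\widehat f(w)|$ is not usable on $\mathbb{S}_{N}$: for $|w|=1$ the traces $f^{*}$ are only defined $\sigma_{N}$-almost everywhere, and the pointwise supremum over the (uncountable) unit ball is $+\infty$ there (point evaluation at a boundary point is not bounded on $H^{\psi}$). Consequently neither the assertion that order boundedness into $M^{\psi}(\mathbb{S}_{N},\sigma_{N})$ amounts to $\int_{\overline{\mathbb{B}_{N}}}\psi(\lambda G)\,d\mu_{\phi}<\infty$, nor the identity $\int_{\mathbb{S}_{N}}\psi(\lambda|(f_{n}\circ\phi)^{*}|)\,d\sigma_{N}=\int_{\overline{\mathbb{B}_{N}}}\psi(\lambda|\widehat{f_{n}}|)\,d\mu_{\phi}$, makes sense on the set $\{|\phi^{*}|=1\}$ --- and that set is the only source of trouble. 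The missing observation is that order boundedness into $L^{\psi}(\mathbb{S}_{N},\sigma_{N})$ (a fortiori into $M^{\psi}$) already forces $|\phi^{*}|<1$ $\sigma_{N}$-a.e.: by Proposition \ref{prop|prop-carac-OB-evaluation} the function $\psi^{-1}\bigl(1/(1-|\phi^{*}|)^{N}\bigr)$ must lie in $L^{\psi}(\mathbb{S}_{N},\sigma_{N})$, in particular be finite a.e., which is exactly $\mu_{\phi}(\mathbb{S}_{N})=0$. Once this is recorded, $(f_{n}\circ\phi)^{*}=f_{n}\circ\phi^{*}$ converges to $0$ $\sigma_{N}$-a.e. because each $\phi^{*}(\zeta)$ is then a fixed point of $\mathbb{B}_{N}$, it is dominated by $g\in M^{\psi}(\mathbb{S}_{N},\sigma_{N})$, and the same one-line dominated convergence as in the Bergman case concludes; your pull-back-measure splitting becomes unnecessary (and, as written, it cannot be repaired without first proving $\mu_{\phi}(\mathbb{S}_{N})=0$, since otherwise the outer integral $\int_{\{|w|>r_{0}\}}\psi(\lambda G)\,d\mu_{\phi}$ does not tend to $0$ as $r_{0}\rightarrow1$).
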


\medskip{}

By definition, if $\delta_{z}$ stands for the functional of evaluation
at the point $z$ and $\phi$ denotes a holomorphic self-map of $\mathbb{B}_{N}$,
then the composition operator $C_{\phi}$ acting on $A_{\alpha}^{\psi}(\B_N)$,
$\alpha\geq-1$, is order bounded into $L_{\alpha}^{\psi}$ (resp.
$M_{\alpha}^{\psi}$) if and only if the function $z\mapsto\left\Vert \delta_{\phi(z)}\right\Vert _{\left(A_{\alpha}^{\psi}\right)^{*}}$
belongs to $L_{\alpha}^{\psi}$ (resp. $M_{\alpha}^{\psi}$). According
to Proposition \ref{prop|point-evaluation-bounded-prop}, we thus
get the following.
\begin{proposition}
\label{prop|prop-carac-OB-evaluation}Let $\alpha\geq-1$, let $\psi$
be an Orlicz function and let $\phi:\mathbb{B}_{N}\rightarrow\mathbb{B}_{N}$
be holomorphic.\begin{enumerate}\item $C_{\phi}$ acting on $A_{\alpha}^{\psi}\left(\mathbb{B}_{N}\right)$
is order-bounded into $L_{\alpha}^{\psi}$ if and only if
\begin{equation}
\psi^{-1}\left({\displaystyle \frac{1}{\left(1-\left|\phi\right|\right)^{N+\alpha+1}}}\right)\in L_{\alpha}^{\psi};\label{eq|cond-simple-OB-Lpsi}
\end{equation}
\item$C_{\phi}$ acting on $A_{\alpha}^{\psi}\left(\mathbb{B}_{N}\right)$
is order-bounded into $M_{\alpha}^{\psi}\left(\mathbb{B}_{N}\right)$
if and only if
\begin{equation}
\psi^{-1}\left({\displaystyle \frac{1}{\left(1-\left|\phi\right|\right)^{N+\alpha+1}}}\right)\in M_{\alpha}^{\psi}.\label{eq|cond-simple-OB-Mpsi}
\end{equation}
\end{enumerate}\end{proposition}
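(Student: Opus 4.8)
The plan is to combine the reformulation of order boundedness through the evaluation functionals (stated immediately before the proposition) with the two-sided estimate for $\left\Vert\delta_a\right\Vert_{(A_\alpha^\psi)^*}$ provided by Proposition \ref{prop|point-evaluation-bounded-prop}; once these are in place, both assertions follow from a single computation.

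First I would make the reduction to the evaluation functionals precise. Writing $Z$ for either $L_\alpha^\psi$ or $M_\alpha^\psi$, the operator $C_\phi$ is order bounded into $Z$ exactly when there is a positive $g\in Z$ with $|f(\phi(z))|\leq g(z)$ for every $f$ in the unit ball of $A_\alpha^\psi(\B_N)$ and almost every $z$. Taking the supremum over such $f$ and using that $\sup_{\Vert f\Vert_{A_\alpha^\psi}\leq 1}|f(\phi(z))|=\left\Vert\delta_{\phi(z)}\right\Vert_{(A_\alpha^\psi)^*}$, the least admissible dominating function is $h_\phi\colon z\mapsto\left\Vert\delta_{\phi(z)}\right\Vert_{(A_\alpha^\psi)^*}$; hence $C_\phi$ is order bounded into $Z$ if and only if $h_\phi\in Z$. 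I would note that $h_\phi$ is measurable — indeed, by Proposition \ref{prop|point-evaluation-bounded-prop} it is comparable to a composition of the continuous function $\psi^{-1}$ with $|\phi|$ (with $|\phi^*|$ in the Hardy case $\alpha=-1$) — so this step carries no hidden obstruction.

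It then remains to replace $h_\phi$ by the explicit expression in the statement. By Proposition \ref{prop|point-evaluation-bounded-prop},
$$h_\phi(z)\approx\psi^{-1}\left(\left(\frac{1+|\phi(z)|}{1-|\phi(z)|}\right)^{N+\alpha+1}\right).$$
Since $1\leq 1+|\phi(z)|\leq 2$, the argument of $\psi^{-1}$ lies between $1/(1-|\phi(z)|)^{N+\alpha+1}$ and $2^{N+\alpha+1}/(1-|\phi(z)|)^{N+\alpha+1}$; as $\psi^{-1}$ is increasing, concave and vanishes at $0$ (so $\psi^{-1}(ct)\leq c\,\psi^{-1}(t)$ for $c\geq 1$), this yields
$$h_\phi(z)\approx\psi^{-1}\left(\frac{1}{(1-|\phi(z)|)^{N+\alpha+1}}\right).$$
Finally, $L_\alpha^\psi$ and $M_\alpha^\psi$ are solid spaces, so membership is preserved under domination by a fixed multiple; therefore $h_\phi$ lies in $L_\alpha^\psi$ (resp. $M_\alpha^\psi$) if and only if the right-hand side belongs to the same space, which is precisely \eqref{eq|cond-simple-OB-Lpsi} (resp. \eqref{eq|cond-simple-OB-Mpsi}).

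The computation is routine; the only delicate point is the first reduction, namely verifying that the pointwise supremum $h_\phi$ is genuinely the optimal dominating function and that it is measurable, with the $\alpha=-1$ case requiring $\phi$ to be read through its radial boundary values $\phi^*$.
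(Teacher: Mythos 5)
Your proposal is correct and follows the same route as the paper: reduce order boundedness into $L_{\alpha}^{\psi}$ (resp.\ $M_{\alpha}^{\psi}$) to membership of $z\mapsto\left\Vert \delta_{\phi(z)}\right\Vert _{\left(A_{\alpha}^{\psi}\right)^{*}}$ in that space, then apply the two-sided estimate of Proposition \ref{prop|point-evaluation-bounded-prop} together with the monotonicity and concavity of $\psi^{-1}$. The paper treats this as immediate from the definition and the cited proposition; your added remarks on measurability, optimality of the dominating function, and solidity of the Orlicz spaces merely make explicit what the paper leaves implicit.
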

If $\psi(x)=x^{p}$, Assertions (1) and (2) are the same and we recall that, in this case, Condition \eqref{eq|cond-simple-OB-Lpsi} (or \eqref{eq|cond-simple-OB-Mpsi}) is equivalent to $C_{\phi}$ being Hilbert-Schmidt on $A_{\alpha}^{2}(\B_N)$ (see the introduction for more precisions).

\begin{remark}\label{today}Let $\psi$ be any Orlicz function and $\alpha \geq -1$. We immediately deduce from Proposition \ref{prop|prop-carac-OB-evaluation} the following observations:
\begin{enumerate}
\item [(i)]If $C_{\phi}:A_{\alpha}^p(\B_N)\rightarrow A_{\alpha}^p(\B_N)$ is order bounded into $L_{\alpha}^p$ for some $1\leq p< +\infty$ (equivalently Hilbert-Schmidt if $p=2$) then $C_{\phi}:A_{\alpha}^{\psi}(\B_N)\rightarrow A_{\alpha}^{\psi}(\B_N)$ is order bounded into $L_{\alpha}^{\psi}$;
\item [(ii)]If $C_{\phi}:A_{\alpha}^{\psi}(\B_N)\rightarrow A_{\alpha}^{\psi}(\B_N)$ is order bounded into $M_{\alpha}^{\psi}$ then $C_{\phi}:A_{\alpha}^p(\B_N)\rightarrow A_{\alpha}^p(\B_N)$ is order bounded into $L_{\alpha}^{p}$ (or equivalently into $M_{\alpha}^p$).
\end{enumerate}
\end{remark}

\medskip{}

In order to prove our main results, we need to complete Proposition \ref{prop|prop-carac-OB-evaluation} with a more geometric understanding of the order boundedness of composition operators. For $\alpha\geq-1$ and $h\in\left(0,1\right)$, let us denote by
$C_{\alpha}\left(h\right)$ the corona defined by
\begin{equation}
C_{\alpha}\left(h\right)=\left\{ \begin{array}{c}
\left\{ z\in\mathbb{B}_{N},\,1-\left|z\right|<h\right\} \mbox{ if }\alpha>-1\\
\left\{ z\in\overline{\mathbb{B}_{N}},\,1-\left|z\right|<h\right\} \mbox{ if }\alpha=-1
\end{array}\right..\label{eq|def-corona-alpha}
\end{equation}
We extend \cite[Theorem 3.15]{lefevre_compact_2009}
to dimension $N>1$ and all $\alpha \geq -1$, providing a useful alternative to Proposition
\ref{prop|prop-carac-OB-evaluation}.
\begin{theorem}
\label{thm|thm_2nd_applic}Let $\alpha\geq-1$, let $\psi$ be an
Orlicz function and let $\phi:\mathbb{B}_{N}\rightarrow\mathbb{B}_{N}$
be holomorphic.\begin{enumerate}\item \begin{enumerate}\item If
$C_{\phi}$ acting on $A_{\alpha}^{\psi}\left(\mathbb{B}_{N}\right)$
is order bounded, then there exist $A>0$ and $\eta \in (0,1)$ such that
\begin{equation}
\mu_{\phi}\left(C_{\alpha}\left(h\right)\right)\leq\frac{1}{\psi\left(A\psi^{-1}\left(1/h^{N+\alpha+1}\right)\right)}\label{eq|eq_order-bound-Cond}
\end{equation}
for any $h\in\left(0,\eta\right)$.\item If $\psi$ satisfies the $\Delta^{1}$--condition,
then (\ref{eq|eq_order-bound-Cond}) is sufficient for the order boundedness
of $C_{\phi}$.\end{enumerate}\item \begin{enumerate}\item If $C_{\phi}$
acting on $A_{\alpha}^{\psi}\left(\mathbb{B}_{N}\right)$ is order
bounded into $AM_{\alpha}^{\psi}\left(\mathbb{B}_{N}\right)$, then
for every $A>0$, there exist $C_{A}>0$ and $h_{A}\in\left(0,1\right)$
such that
\begin{equation}
\mu_{\phi}\left(C_{\alpha}\left(h\right)\right)\leq\frac{C_{A}}{\psi\left(A\psi^{-1}\left(1/h^{N+\alpha+1}\right)\right)}\label{eq|eq_order-bound-Mpsi-Cond}
\end{equation}
for any $h\in\left(0,h_{A}\right)$.\item If $\psi$ satisfies the
$\Delta^{1}$--condition, then (\ref{eq|eq_order-bound-Mpsi-Cond}) is
sufficient for the order boundedness of $C_{\phi}$ into $AM_{\alpha}^{\psi}\left(\mathbb{B}_{N}\right)$.\end{enumerate}\end{enumerate}
\end{theorem}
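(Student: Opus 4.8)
The plan is to push everything onto the single auxiliary function
\[
g(z)=\psi^{-1}\left(\frac{1}{(1-|\phi(z)|)^{N+\alpha+1}}\right),
\]
whose membership in $L_\alpha^\psi$ (resp.\ $M_\alpha^\psi$) is, by Proposition~\ref{prop|prop-carac-OB-evaluation}, exactly equivalent to the order boundedness of $C_\phi$ into $L_\alpha^\psi$ (resp.\ $M_\alpha^\psi$). The bridge between $g$ and the pull-back measure is the elementary identity
\[
\{z:g(z)>t\}=\{z:1-|\phi(z)|<h(t)\}=\phi^{-1}\big(C_\alpha(h(t))\big),\qquad \psi^{-1}\!\big(1/h(t)^{N+\alpha+1}\big)=t,
\]
so that the distribution function of $g$ is precisely $t\mapsto\mu_{\phi,\alpha}(C_\alpha(h(t)))$. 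Every assertion then reduces to passing between this distribution function and the Luxemburg modular $\int\psi(g/C)\,dv_\alpha$. (For $\alpha=-1$ one argues identically, reading $g$ through the boundary map $\phi^{*}$ and replacing $C_\alpha(h)$ by its closed-ball version.)

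For the necessary conditions (1)(a) and (2)(a) I would use a Chebyshev/Markov inequality in the Orlicz scale. If $\int\psi(g/D)\,dv_\alpha\le 1$, then for every $t$,
\[
\psi(t/D)\,v_\alpha\{g>t\}\le\int_{\{g>t\}}\psi(g/D)\,dv_\alpha\le 1,
\]
whence $\mu_{\phi,\alpha}(C_\alpha(h))=v_\alpha\{g>\psi^{-1}(1/h^{N+\alpha+1})\}\le 1/\psi\!\big(\tfrac1D\psi^{-1}(1/h^{N+\alpha+1})\big)$, which is (\ref{eq|eq_order-bound-Cond}) with $A=1/D$. For (2)(a) the point is that $g\in M_\alpha^\psi$ yields $\int\psi(g/D)\,dv_\alpha=:C_D<\infty$ for \emph{every} $D>0$; running the same inequality with $D=1/A$ produces (\ref{eq|eq_order-bound-Mpsi-Cond}) with the prescribed $A$ and $C_A=C_{1/A}$.

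For the sufficiency statements (1)(b) and (2)(b) I would use the layer-cake formula. Writing $\psi(g(z)/C)>s\iff 1-|\phi(z)|<h_s$ with $\psi^{-1}(1/h_s^{N+\alpha+1})=C\psi^{-1}(s)$, one gets
\[
\int_{\mathbb{B}_N}\psi(g/C)\,dv_\alpha=\int_0^\infty\mu_{\phi,\alpha}\big(C_\alpha(h_s)\big)\,ds.
\]
Feeding in (\ref{eq|eq_order-bound-Cond}) (resp.\ (\ref{eq|eq_order-bound-Mpsi-Cond})) for small $h_s$, and using that $v_\alpha$ is a probability measure to absorb the remaining (small-$s$) range where the hypothesis does not yet apply, this is dominated, up to an additive constant, by $\int^\infty ds/\psi\big(AC\,\psi^{-1}(s)\big)$, which after the substitution $s=\psi(u)$ becomes $\int^\infty \psi'(u)/\psi(ACu)\,du$. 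The crux — and the only place the growth hypothesis is used — is to show that this last integral is finite under $\Delta^1$. Here convexity of $\psi$ gives $\psi'(u)\le\psi(2u)/u$, while Proposition~\ref{prop|prop-Orlicz-growth}(2) provides, for a fixed $b>1$, a constant $K$ with $(2u)^b\psi(2u)\le\psi(2Ku)$ for $u$ large; choosing $C$ (in (1)(b), where $A$ is fixed) resp.\ $A$ (in (2)(b), where $C$ is arbitrary) so that $AC\ge 2K$ then gives $\psi(2u)/(u\,\psi(ACu))\lesssim u^{-1-b}$, an integrable tail (the multiplicative constant $C_A$ in (2)(b) being harmless). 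Hence $\int\psi(g/C)\,dv_\alpha<\infty$ — for the single $C$ of (1)(b), and for \emph{every} $C>0$ in (2)(b) by letting $A$ grow like $1/C$ — which yields $g\in L_\alpha^\psi$ resp.\ $g\in M_\alpha^\psi$, and the conclusion through Proposition~\ref{prop|prop-carac-OB-evaluation}. The main obstacle is exactly this $\Delta^1$-driven convergence estimate; the remaining steps are routine bookkeeping with distribution functions.
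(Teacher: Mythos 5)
Your proposal is correct and follows the same skeleton as the paper's proof: Markov's inequality applied to $\psi\bigl(A\,\psi^{-1}(1/(1-|\phi|)^{N+\alpha+1})\bigr)$ for the necessary conditions (1)(a) and (2)(a), and a distribution-function computation reducing the sufficiency to the convergence of $\int^{\infty}\psi'(u)/\psi(Bu)\,du$. The one place you diverge is in how that convergence is obtained: the paper invokes the weak-Orlicz space $L^{\psi,\infty}$ and cites an external result (Lemma 3.19 of L\'ef\`evre--Li--Queff\'elec--Rodr\'iguez-Piazza) asserting the finiteness of this integral under $\Delta^{1}$, whereas you prove it directly from the convexity bound $\psi'(u)\leq\psi(2u)/u$ together with Proposition \ref{prop|prop-Orlicz-growth}(2), which yields an integrable tail $\lesssim u^{-1-b}$ once the product of the two scaling constants exceeds the growth constant. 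Your route is self-contained and makes transparent exactly where $\Delta^{1}$ enters, at the cost of assuming (harmlessly, as the paper also does) that $\psi$ is $\mathcal{C}^{1}$; the bookkeeping distinguishing (1)(b) (one admissible scaling constant suffices) from (2)(b) (every scaling constant must work, achieved by letting $A$ grow) is handled correctly.
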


\begin{remark}
\label{rem|rem-HO-growth}By Theorem \ref{thm|thm_2nd_applic}, Theorem \ref{thm|O-B-always_B-O} is trivially
false for $\alpha=-1$, i.e. for $H^{\psi}\left(\mathbb{B}_{N}\right)$:
if $\phi\left(z\right)=z$ for every $z\in\mathbb{B}_{N}$, $\mu_{\phi}\left(C_{-1}\left(h\right)\right)=1$
hence $C_{\phi}$ is not order bounded.
Thus we also deduce that, whatever the Orlicz function $\psi$, $H^{\psi}\left(\mathbb{B}_{N}\right)$
does not coincide with any weighted Banach space.
\end{remark}

The proof of Theorem \ref{thm|thm_2nd_applic} is an adaptation of that of \cite[Theorem 3.15]{lefevre_compact_2009}. It relies on the introduction of the \emph{weak-}Orlicz space:
\begin{Definition}Given an Orlicz function $\psi$ and a probability space $(\Omega, \P)$, the \emph{weak-}Orlicz space $L^{\psi,\infty}(\Omega,\P)$ is the space of all measurable functions $f:\Omega \rightarrow \C$ such that, for some constant $c>0$, one has
$$\P(|f|>t)\leq \frac{1}{\psi(ct)},$$
for every $t>0$.
\end{Definition}
We then have the following \cite[Proposition 3.18]{lefevre_compact_2009} which is the key of the proof of \cite[Theorem 3.15]{lefevre_compact_2009}.
\begin{proposition}\label{prop-weak-Orlicz}If $\psi$ is an Orlicz function satisfying the $\Delta^1$--condition, then $L^{\psi}(\Omega,\P)=L^{\psi,\infty}(\Omega,\P)$.
\end{proposition}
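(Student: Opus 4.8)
The plan is to prove the two inclusions separately, the point being that only $L^{\psi,\infty}(\Omega,\P)\subseteq L^{\psi}(\Omega,\P)$ requires the $\Delta^{1}$--condition; the reverse inclusion holds for every Orlicz function. For the inclusion $L^{\psi}(\Omega,\P)\subseteq L^{\psi,\infty}(\Omega,\P)$, I would simply invoke Markov's inequality. Given $f\in L^{\psi}(\Omega,\P)$, after a harmless rescaling there is $C>0$ with $\int_{\Omega}\psi(|f|/C)\,d\P\leq 1$. Since $\psi$ is non-decreasing, $\{|f|>t\}\subseteq\{\psi(|f|/C)\geq\psi(t/C)\}$, so Markov's inequality yields $\P(|f|>t)\leq \psi(t/C)^{-1}\int_{\Omega}\psi(|f|/C)\,d\P\leq \psi(t/C)^{-1}$ for every $t>0$ (the estimate being vacuous where $\psi(t/C)=0$). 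With $c=1/C$ this is exactly the weak-Orlicz bound, so $f\in L^{\psi,\infty}(\Omega,\P)$.

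For the reverse inclusion, fix $f\in L^{\psi,\infty}(\Omega,\P)$ and let $c>0$ be such that $\P(|f|>t)\leq \psi(ct)^{-1}$ for all $t>0$. My goal is to produce $\lambda>0$ with $\int_{\Omega}\psi(\lambda|f|)\,d\P<\infty$, which is membership in $L^{\psi}$ with constant $1/\lambda$. Since $\P$ is a probability measure and $\psi$ is continuous, the contribution of the region $\{|f|\leq T_{0}\}$ is at most $\psi(\lambda T_{0})<\infty$, so everything reduces to the tail. I would decompose it dyadically, using that $\psi$ is non-decreasing on each annulus $\{2^{n}<|f|\leq 2^{n+1}\}$ together with the weak estimate at $t=2^{n}$:
\[
\int_{\{|f|>T_{0}\}}\psi(\lambda|f|)\,d\P\leq\sum_{n\geq n_{0}}\psi(\lambda 2^{n+1})\,\P(|f|>2^{n})\leq\sum_{n\geq n_{0}}\frac{\psi(\lambda 2^{n+1})}{\psi(c\,2^{n})}.
\]

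It then remains to choose $\lambda$ so that this series converges, and this is exactly where the $\Delta^{1}$--condition enters. Let $\kappa>0$ be the constant with $x\psi(x)\leq\psi(\kappa x)$ for all $x$ large (or, through Proposition~\ref{prop|prop-Orlicz-growth}(2), with any prescribed power of $x$). Setting $\lambda=c/(2\kappa)$ gives $\lambda 2^{n+1}=c\,2^{n}/\kappa=:u_{n}$, whence $c\,2^{n}=\kappa u_{n}$, and the general term becomes $\psi(u_{n})/\psi(\kappa u_{n})\leq 1/u_{n}$ for $n$ large, directly by the $\Delta^{1}$--condition. Because $u_{n}=(c/\kappa)2^{n}$ grows geometrically, $\sum_{n}1/u_{n}<\infty$, so the tail integral is finite and $f\in L^{\psi}(\Omega,\P)$. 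The main thing to watch, and the only mildly delicate point, is the bookkeeping of constants: one must verify that $\lambda$ depends only on $c$ and $\kappa$ (so the argument is uniform), and that the finitely many low-index terms, where the growth estimate is not yet in force, contribute only a finite amount. Both are routine once the constants are lined up as above, so I expect no genuine obstacle beyond correctly calibrating the dilation $\lambda$ against the weak constant $c$.
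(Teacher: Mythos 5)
Your proof is correct: Markov's inequality gives $L^{\psi}\subseteq L^{\psi,\infty}$ for any Orlicz function, and your dyadic tail estimate with $\lambda=c/(2\kappa)$ correctly turns the $\Delta^{1}$--inequality $u\psi(u)\leq\psi(\kappa u)$ into the summable bound $\psi(u_{n})/\psi(\kappa u_{n})\leq 1/u_{n}$ with $u_{n}$ growing geometrically. The paper itself gives no proof of this proposition --- it simply cites \cite[Proposition 3.18]{lefevre_compact_2009} --- and your argument is essentially the standard one used there (a distribution-function estimate combined with $\Delta^{1}$, which you discretize dyadically rather than writing as an integral against $\psi'$), so there is nothing to flag.
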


\begin{proof}[Proof of Theorem \ref{thm|thm_2nd_applic}]For $\alpha >-1$, the both \emph{only if} parts proceed from Proposition \ref{prop|prop-carac-OB-evaluation} and the Markov's inequality
\begin{eqnarray*}\mu _{\phi}(C_{\alpha}(h)) & \leq & v_{\alpha}\left(\psi \left(A\psi^{-1}\left(\frac{1}{(1-|\phi|)^{N+\alpha+1}}\right)\right)>\psi \left(A\psi^{-1}\left(\frac{1}{h^{N+\alpha+1}}\right)\right)\right)\\
& \leq & \frac{1}{\psi \left(A\psi^{-1}\left(1/h^{N+\alpha+1}\right)\right)}\int _{\B_N}\psi \left(A\psi^{-1}\left(\frac{1}{(1-|\phi|)^{N+\alpha+1}}\right)\right)dv_{\alpha}.
\end{eqnarray*}
For $\alpha =-1$, it is enough to replace $\B_N$ by $\mathbb{S}_N$ in the previous inequalities (and to remind the notations).

We now assume that (\ref{eq|eq_order-bound-Cond}) (resp. (\ref{eq|eq_order-bound-Mpsi-Cond})) is satisfied and that $L^{\psi}(\Omega,\P)=L^{\psi,\infty}(\Omega,\P)$ (Proposition \ref{prop-weak-Orlicz}). Without loss of generality, we may assume that $\psi \in \CC ^1(\R)$. According to \cite[Lemma 3.19]{lefevre_compact_2009}, there exists $B>1$ such that
$$\int _1 ^{\infty}\frac{\psi'(u)}{\psi(Bu)}du=\int _{\psi(1)}^{\infty}\frac{1}{\psi(B\psi^{-1}(x))}dx<+\infty.$$
For $C>0$, we denote $\chi_{C,\alpha}(x)=\psi(C\psi^{-1}(x^{N+\alpha+1}))\in \CC^1(\R)$. Using (\ref{eq|eq_order-bound-Cond}) (resp. (\ref{eq|eq_order-bound-Mpsi-Cond}), there exists $A>0$ such that (resp. for every $A>0$)
\begin{eqnarray*}\int _{\B_N}\chi_{C,\alpha}\left(\frac{1}{1-|\phi|}\right)dv_{\alpha} & = & \chi_{C,\alpha}(1) + \int _1^{\infty}v_{\alpha}\left(1-|\phi|<1/t\right)\chi'_{C,\alpha}(t)dt\\
& \leq & \chi_{C,\alpha}(1) + K\int_1^{\infty}\frac{\chi'_{C,\alpha}(t)}{\chi_{A,\alpha}(t)}dt,
\end{eqnarray*}
for some $K<\infty$. Setting $C=A/B$, the change of variable $u=\chi_{C,\alpha}(t)$ gives $\chi_{A,\alpha}(t)=\chi_{B,\alpha}(u)$ and
$$\int _{\B_N}\chi_{C,\alpha}\left(\frac{1}{1-|\phi|}\right)dv_{\alpha}\leq \chi_{C,\alpha}(1)+K\int_{\chi_{C,\alpha}(1)}^{\infty}\frac{du}{\chi_{B,\alpha}(u)}<+\infty.$$
Thus (\ref{eq|cond-simple-OB-Lpsi}) (resp. \ref{eq|cond-simple-OB-Mpsi}) is satisfied and we are done.
\end{proof}

We are now ready to characterize \emph{small} Bergman-Orlicz spaces.

\subsection{A more complete characterization of \emph{small} Bergman-Orlicz spaces}

We recall that the function $w^{\psi}$ is defined in \eqref{weightpsi}. In order to prove the main result of this paragraph, we need the following lemma.

\begin{lemma}\label{lem-main-thm-carac}Let $M\geq 1$, $\alpha \geq -1$, and $\psi$ be an Orlicz function. Set $\gamma = N+\alpha +1$. The following assertions are equivalent.
\begin{enumerate}
\item $A_{\alpha}^{\psi}\left(\mathbb{B}_{M}\right)=H_{w^{\psi,\gamma}}^{\infty}\left(\mathbb{B}_{M}\right)$ with equivalent norms;
\item There exist finitely many functions $f_1,\ldots ,f_d$ in $A_{\alpha}^{\psi}(\B_M)$ such that, for any $f$ in the unit ball of $A_{\alpha}^{\psi}(\B_M)$,
$$|f(z)|\leq |f_1(z)|+\ldots +|f_d(z)|,$$
for any $z\in\B_M$.
\end{enumerate}
\end{lemma}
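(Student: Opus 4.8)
The plan is to run everything through the point‑evaluation functionals, since assertion (2) is really a statement about the supremum of the unit ball at each point. By Proposition \ref{prop|point-evaluation-bounded-prop} (read in $\B_M$),
\[
\sup_{\|f\|_{A_\alpha^\psi}\leq 1}|f(z)| \;=\; \|\delta_z\|_{(A_\alpha^\psi)^*}\;\approx\; \psi^{-1}\!\left(\tfrac{1}{(1-|z|)^{\gamma}}\right)\;=\;\frac{1}{w^{\psi,\gamma}(z)},\qquad z\in\B_M .
\]
Taking the supremum over the unit ball in the inequality of (2), I would first record the equivalent reformulation that both implications will use: \emph{(2) holds if and only if there exist finitely many $f_1,\dots,f_d\in A_\alpha^\psi(\B_M)$ with $\sum_{i=1}^d|f_i(z)|\gtrsim 1/w^{\psi,\gamma}(z)$ for every $z\in\B_M$} (the constant being absorbable into the $f_i$).

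For (2)$\Rightarrow$(1) I would argue directly and quantitatively. The inclusion $A_\alpha^\psi(\B_M)\subseteq H_{w^{\psi,\gamma}}^\infty(\B_M)$, with $\|\cdot\|_{w^{\psi,\gamma}}\lesssim\|\cdot\|_{A_\alpha^\psi}$, always holds by the point‑evaluation estimate above, so only the reverse embedding uses the hypothesis. Given $f\in H_{w^{\psi,\gamma}}^\infty$ I would write $|f(z)|\leq \|f\|_{w^{\psi,\gamma}}/w^{\psi,\gamma}(z)\lesssim \|f\|_{w^{\psi,\gamma}}\sum_{i=1}^d|f_i(z)|$ and then apply the convexity of $\psi$ through the discrete Jensen inequality $\psi\big(\tfrac1d\sum_i t_i\big)\leq\tfrac1d\sum_i\psi(t_i)$. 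Choosing $C\approx\|f\|_{w^{\psi,\gamma}}$ proportional to $d\max_i\|f_i\|_{A_\alpha^\psi}$ turns this into $\int_{\B_M}\psi(|f|/C)\,dv_\alpha\leq \tfrac1d\sum_i\int_{\B_M}\psi(|f_i|/\|f_i\|_{A_\alpha^\psi})\,dv_\alpha\leq 1$, whence $f\in A_\alpha^\psi$ with $\|f\|_{A_\alpha^\psi}\lesssim\|f\|_{w^{\psi,\gamma}}$. This is the short direction and the one I am confident about.

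For (1)$\Rightarrow$(2) the key observation is that, once $A_\alpha^\psi(\B_M)=H_{w^{\psi,\gamma}}^\infty(\B_M)$, any functions produced inside $H_{w^{\psi,\gamma}}^\infty$ automatically lie in $A_\alpha^\psi$. The problem therefore reduces to a statement about the single weighted space $H_{w^{\psi,\gamma}}^\infty$: build finitely many $f_i$ in it with $\sum_i|f_i(z)|\gtrsim \psi^{-1}\big(1/(1-|z|)^{\gamma}\big)$. Here I would exploit that $w^{\psi,\gamma}$ is radial and \emph{essential} (Lemma \ref{lem|weight-essential-ball-spec-case}), so that at each point the extremal growth $1/w^{\psi,\gamma}(z)$ is attained, up to a constant, by a unit‑ball function — for instance a normalized Berezin‑type function $f_a$. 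The plan is to superpose rotated/dilated such extremal functions along a lacunary sequence of radii, aiming for finitely many functions whose moduli jointly never fall below a fixed multiple of the radial profile.

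This final construction is the step I expect to be the genuine obstacle. The difficulty is that the angular scale over which a single holomorphic function can sustain its maximal radial growth shrinks as $|z|\to1$, so that finitely many rotations of one extremal function cannot cover all directions at every radius; a working construction must use functions that already oscillate at the correct boundary scale (a lacunary superposition, or functions with prescribed zero sets). The crux is the \emph{uniform, cancellation‑free} lower bound $\sum_i|f_i(z)|\gtrsim 1/w^{\psi,\gamma}(z)$, i.e. the statement that the chosen functions have no common point where all of them are simultaneously small. I would isolate this as a separate technical sublemma and prove it by estimating each dyadic frequency block separately and controlling the off‑diagonal cross terms through the lacunarity of the exponents; it is precisely in this estimate that the fast ($\Delta^2$‑type) growth of $\psi$ encoded in hypothesis (1) must be used.
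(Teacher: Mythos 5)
Your reduction of (2) to the pointwise lower bound $\sum_i|f_i|\gtrsim 1/w^{\psi,\gamma}$ via Proposition \ref{prop|point-evaluation-bounded-prop} is exactly right, and your (2)$\Rightarrow$(1) argument (convexity of $\psi$ plus the discrete Jensen inequality to show every $f\in H_{w^{\psi,\gamma}}^{\infty}$ lies in $L_\alpha^\psi$) is correct and is precisely what the paper means by ``it is clear from Proposition \ref{prop|point-evaluation-bounded-prop}''. The problem is the direction (1)$\Rightarrow$(2): you correctly identify that everything reduces to producing finitely many functions in $H_{w^{\psi,\gamma}}^{\infty}(\B_M)$ whose moduli jointly dominate $1/w^{\psi,\gamma}$, you correctly flag this as the genuine obstacle, and then you do not prove it. A sketch of ``superpose rotated/dilated extremal functions along a lacunary sequence and control the off-diagonal cross terms'' is not a proof; the cancellation-free lower bound is the entire content of the step. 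The paper does not construct these functions either: it observes that $w^{\psi,\gamma}$ is essential (Lemma \ref{lem|weight-essential-ball-spec-case}, valid for \emph{every} Orlicz function), hence equivalent to a log-convex radial weight by \cite{bierstedt_associated_1998}, and then invokes \cite[Theorem 1.3]{abakumov_moduli_2015}, which asserts exactly the existence of such a finite dominating family for any log-convex radial weight. Your lacunary-series plan is in the spirit of how that external theorem is proved, but as written your argument has a hole where the paper has a citation.

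A second, more conceptual error: you assert that the uniform lower bound is ``precisely'' where ``the fast ($\Delta^2$-type) growth of $\psi$ encoded in hypothesis (1) must be used.'' This is a misdiagnosis. The lemma is stated for an \emph{arbitrary} Orlicz function, and the construction of the dominating family in $H_{w^{\psi,\gamma}}^{\infty}$ needs only the log-convexity (equivalently, essentiality) of the weight, which holds with no growth assumption on $\psi$. Hypothesis (1) enters only at the very last step, to transfer the functions $f_1,\dots,f_d$ from $H_{w^{\psi,\gamma}}^{\infty}(\B_M)$ into $A_\alpha^\psi(\B_M)$, since the two spaces are assumed equal. If your proposed sublemma genuinely required $\Delta^2$, it would not prove the lemma as stated.
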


\begin{proof}Let us assume that (1) holds. We recall first that if $\psi$ is an Orlicz function, then $w^{\psi,\gamma}$ is an essential weight on $\B_N$ by Lemma \ref{lem|weight-essential-ball-spec-case}, so it is equivalent to a log-convex weight by \cite{bierstedt_associated_1998} (see also \cite[Lemma 2.1]{hyvarinen_essential_2012}). By \cite[Theorem 1.3]{abakumov_moduli_2015} (see also \cite[Corollary 12]{abakumov_holomorphic_nodate}), (2) holds for $H_{w^{\psi,\gamma}}^{\infty}\left(\mathbb{B}_{M}\right)$ instead of $A_{\alpha}^{\psi}\left(\mathbb{B}_{M}\right)$, hence the first implication.
Conversely, if (2) holds it is clear from Proposition \ref{prop|point-evaluation-bounded-prop} that any function in $H_{w^{\psi,\gamma}}^{\infty}(\B_N)$ also belongs to $A_{\alpha}^{\psi}(\B_N)$.
\end{proof}

We introduce the symbol
\begin{equation}\label{eq-symbol}
\varphi(z)=\left(\pi(z),0'\right)
\end{equation}
where $0'$ is the null $(N-1)$-tuple and $\pi(z)=N^{N/2}z_{1}z_{2}\ldots z_{N}$
for $z=\left(z_{1},z_{2},\ldots,z_{N}\right)\in\mathbb{B}_{N}$. Note that
$\pi$ maps $\mathbb{B}_{N}$ onto $\mathbb{D}$ and $\overline{\mathbb{B}_{N}}$
onto $\overline{\mathbb{D}}$ by the Arithmetic Mean Inequality. Our first main result states as follows.

\begin{theorem}
\label{thm|thm-equiv-OB-B-Delta2}Let $M\geq 1$, $N>1$, $\alpha>-1$,
and let $\psi$ be an Orlicz function. Set $\gamma = N+\alpha +1$. The following assertions are equivalent.
\begin{enumerate}
\item $\psi$ belongs to the $\Delta^{2}$--class;
\item $A_{\alpha}^{\psi}\left(\mathbb{B}_{M}\right)=H_{w^{\psi,\gamma}}^{\infty}\left(\mathbb{B}_{M}\right)$ with equivalent norms;
\item $A_{\alpha}^{\psi}\left(\mathbb{B}_{M}\right)=H_{w^{\psi}}^{\infty}\left(\mathbb{B}_{M}\right)$ with equivalent norms;
\item There exist finitely many functions $f_1,\ldots ,f_d$ in $A_{\alpha}^{\psi}(\B_M)$ such that, for any $f$ in the unit ball of $A_{\alpha}^{\psi}(\B_M)$,
$$|f(z)|\leq |f_1(z)|+\ldots +|f_d(z)|,\,z\in \B_M;$$
\item For every $\phi:\mathbb{B}_{M}\rightarrow\mathbb{B}_{M}$ holomorphic,
$C_{\phi}$ acting on $A_{\alpha}^{\psi}\left(\mathbb{B}_{M}\right)$
is order-bounded into $L_{\alpha}^{\psi}$;
\item For every $\phi:\mathbb{B}_{N}\rightarrow\mathbb{B}_{N}$ holomorphic,
$C_{\phi}$ is bounded on $A_{\alpha}^{\psi}\left(\mathbb{B}_{N}\right)$;
\item For every $\phi:\mathbb{B}_{N}\rightarrow\mathbb{B}_{N}$ holomorphic,
$C_{\phi}$ is bounded on $AM_{\alpha}^{\psi}\left(\mathbb{B}_{N}\right)$;
\item $C_{\varphi}$ with symbol $\varphi$ defined in (\ref{eq-symbol}) is bounded on $A_{\alpha}^{\psi}\left(\mathbb{B}_{N}\right)$.
\end{enumerate}
\end{theorem}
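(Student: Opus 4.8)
The plan is to show that $(1)$ implies every other assertion by quoting what is already proved, and then to recover $\psi\in\Delta^2$ from each of the two families of conditions. The forward implications are short: $(1)\Rightarrow(3)$ is Theorem \ref{thm|THM1}, $(1)\Rightarrow(2)$ follows from it together with Lemma \ref{lem|lemma-tech-charac-BO} (which gives $w^{\psi,\gamma}\approx w^{\psi}$ when $\psi\in\Delta^2$), $(2)\Leftrightarrow(4)$ is Lemma \ref{lem-main-thm-carac}, $(1)\Rightarrow(5)$ is Theorem \ref{thm|O-B-always_B-O}, and $(1)\Rightarrow(6)$ is Corollary \ref{coro|Cphi-bounded-BO-Delta2}; moreover $(6)\Rightarrow(7)$ because $AM_{\alpha}^{\psi}(\B_N)$ is the closure of $H^{\infty}$ (Theorem \ref{thm|weak-star-topo-H-O}) and is therefore $C_{\phi}$--invariant, while $(6)\Rightarrow(8)$ is trivial. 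The crux is the converse, and the organizing observation is that assertions $(2)$--$(5)$ live on $\B_M$ whereas $(6)$--$(8)$ live on $\B_N$: the two families can only be bridged through the dimension--free statement $(1)$. I would thus prove one reverse implication per cluster, the hypothesis $N>1$ entering decisively only in the second.

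For the $\B_M$--cluster I would derive $(1)$ from $(5)$ and from $(4)$ by a single computation. Applying $(5)$ to $\phi=\mathrm{id}$ and Proposition \ref{prop|prop-carac-OB-evaluation}, respectively dominating $z\mapsto\|\delta_z\|_{(A_\alpha^\psi)^*}\simeq\psi^{-1}(1/(1-|z|)^{\delta})$, $\delta=M+\alpha+1$ (Proposition \ref{prop|point-evaluation-bounded-prop}), by the finite sum $\sum_i|f_i|\in L_\alpha^\psi$ provided by $(4)$, one obtains in both cases that $\psi^{-1}(1/(1-|z|)^{\delta})\in L^{\psi}(\B_M,v_\alpha)$; assertion $(3)$ feeds into $(4)$ through the same finite--majorant mechanism. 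In polar coordinates and after the substitution $y=\psi(x)$, this integrability reads $\int^{\infty}\psi\big(\psi^{-1}(y)/C\big)\,y^{-1-\theta}\,dy<\infty$ with $\theta=(\alpha+1)/\delta\in(0,1)$. Since $y\mapsto\psi(\psi^{-1}(y)/C)$ is increasing, comparing the integral over $[y,2y]$ forces $\psi(\psi^{-1}(y)/C)=o(y^{\theta})$; rewriting this with $y=\psi(Cx)$ gives $\psi(x)^{1/\theta}\leq\psi(Cx)$ for large $x$, i.e. $\psi\in\Delta^2$ by the one--exponent form of Proposition \ref{prop|prop-Orlicz-growth}(3). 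This settles the whole first cluster.

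The heart of the theorem, and the step I expect to be hardest, is $(8)\Rightarrow(1)$ (and $(7)\Rightarrow(1)$ identically, the test functions being bounded and hence in $AM_\alpha^\psi$). Assuming $C_\varphi$ bounded, Fact \ref{factbounded} yields $A>0$ and $\eta$ with $\varrho_{\varphi,\alpha}(h)\leq 1/\psi\big(A\psi^{-1}(1/h^{\gamma})\big)$ for $h<\eta$. I would then bound $\varrho_{\varphi,\alpha}(h)$ from below by the pull--back of the single window at the pole $e_1$, $\mu_{\varphi,\alpha}(S(e_1,h))=v_\alpha\{z:\,|1-\pi(z)|<h\}$, and carry out the volume estimate that is the technical core. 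By the Arithmetic Mean Inequality $\{\pi=1\}\cap\S_N$ is not a point but the $(N-1)$--torus $\{|z_j|=1/\sqrt N,\ \sum_j\arg z_j\equiv 0\}$; parametrising a tube around it by the $N$ radial deviations $s_j$ (so that $\sigma:=-\sum_j s_j\simeq 1-|\pi(z)|$ and $1-|z|^2\simeq\sigma$) and the single transverse phase $\Theta\simeq\arg\pi(z)$, the constraints $|\Theta|\lesssim h$ and $\sum_j s_j^2\lesssim\sigma\lesssim h$ give
\[
\mu_{\varphi,\alpha}(S(e_1,h))\;\simeq\;h\int_0^{h}\sigma^{\alpha}\,\sigma^{(N-1)/2}\,d\sigma\;\simeq\;h^{\frac{N+3}{2}+\alpha}.
\]
Because $\tfrac{N+3}{2}+\alpha<N+\alpha+1$ precisely when $N>1$, this mass is far larger than that of a generic window, and this is exactly where the dimension hypothesis is used.

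Feeding this lower bound into Fact \ref{factbounded} and setting $t=\psi^{-1}(1/h^{\gamma})$ yields $\psi(At)\lesssim\psi(t)^{\kappa}$ with $\kappa=\big(\tfrac{N+3}{2}+\alpha\big)/(N+\alpha+1)<1$. As $\psi(t)\to\infty$, the constant $A$ is forced to satisfy $A<1$; replacing $t$ by $t/A$ then inverts the estimate into $\psi(t/A)\gtrsim\psi(t)^{1/\kappa}$, i.e. $\psi(Ct)\geq\psi(t)^{b}$ for $C=1/A>1$ and some $b>1$, which is again $\Delta^2$ by Proposition \ref{prop|prop-Orlicz-growth}(3). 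Together with the forward implications this closes all the equivalences. The main obstacle is thus the torus volume computation: getting the exponent $\tfrac{N+3}{2}+\alpha$ right — in particular identifying the correct transverse dimensions and the weight behaviour $1-|z|^2\simeq\sigma$ — is what makes the necessary Carleson condition strong enough to recover $\Delta^2$, and it is precisely here that $N>1$ is indispensable.
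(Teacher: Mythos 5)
Your proposal is correct, and it splits into a part that mirrors the paper and a part that takes a genuinely different route. The forward implications and the key step $(8)\Rightarrow(1)$ are essentially the paper's argument: the paper cites MacCluer--Shapiro/Ahern for $\mu_{\varphi,\alpha}(S(e_1,h))\approx h^{(2\alpha+N+3)/2}$ where you rederive it by the tube computation around the torus $\{|z_j|=1/\sqrt N\}$ (your exponent $\frac{N+3}{2}+\alpha$ agrees), and both arguments then extract $\Delta^2$ from the exponent comparison $\frac{2\alpha+N+3}{2}<N+\alpha+1$, which is where $N>1$ enters; the paper argues by contraposition via Proposition \ref{prop|prop-Orlicz-growth}, you argue directly, but this is the same proof. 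The genuine divergence is in the $\B_M$--cluster. The paper proves $(5)\Rightarrow(1)$ by a case split: for $M>1$ it routes through $(5)\Rightarrow(6)\Rightarrow(8)\Rightarrow(1)$ (so the recovery of $\Delta^2$ again goes through a composition operator and the Carleson-type necessary condition), and for $M=1$ it tests the order boundedness of $C_{\mathrm{id}}$ against the corona estimate $\mu_{\phi}(C_\alpha(h))\sim h^{1+\alpha}$ via Theorem \ref{thm|thm_2nd_applic}(1)(a). You instead note that $(5)$ (applied to the identity), $(4)$ and $(3)$ all force $\psi^{-1}\bigl(1/(1-|z|)^{M+\alpha+1}\bigr)\in L^{\psi}_{\alpha}$, and extract $\Delta^2$ directly from the convergence of $\int^{\infty}\psi\bigl(\psi^{-1}(y)/C\bigr)y^{-1-\theta}\,dy$ with $\theta=(\alpha+1)/(M+\alpha+1)\in(0,1)$ by a dyadic tail estimate; this is uniform in $M\geq1$, dispenses with the case split and with the Carleson machinery for this cluster, and isolates transparently why $\alpha>-1$ is needed. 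Your direct $(7)\Rightarrow(1)$ via the boundedness of the test functions also sidesteps the paper's appeal to Theorem \ref{thm|weak-star-topo-H-O}(4), which formally requires $\nabla_2$. One point to tighten: ``$(3)$ feeds into $(4)$ through the same finite--majorant mechanism'' needs a word, since Lemma \ref{lem|weight-essential-ball-spec-case} only gives essentialness of $w^{\psi,\gamma}$ for $\gamma\geq N$ and $(3)$ involves $\gamma=1$; one should either pass to the associated weight $\widetilde{w^{\psi}}$ before invoking the Abakumov--Doubtsov theorem, or prove $(3)\Rightarrow(1)$ directly by comparing $1/w^{\psi}(z)$ with $\left\Vert\delta_z\right\Vert_{(A_{\alpha}^{\psi})^*}\approx\psi^{-1}\bigl(1/(1-|z|)^{M+\alpha+1}\bigr)$, which yields the conclusion of Lemma \ref{lem|lemma-tech-charac-BO} and hence $\Delta^2$. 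The paper is equally terse on this point, so this is a shared, easily repaired omission rather than a gap specific to your argument.
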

\begin{proof}Let $M$ be fixed as in the statement of the theorem. We will prove that (1) $\Leftrightarrow$ (2) $\Leftrightarrow$ (3) $\Leftrightarrow$ (4) $\Leftrightarrow$ (5) and that (1) $\Leftrightarrow$ (6) $\Leftrightarrow$ (7). Since (1) does not depend on $M$ nor $N$, this will be enough. (1) $\Rightarrow$ (2) was proven in Paragraph \ref{first-carac-parag}. (2) $\Leftrightarrow$ (4) is Lemma \ref{lem-main-thm-carac}. If (4) holds, since it is equivalent to (2), then Proposition \ref{prop|prop-Cphi-bounded-growth-space} ensures that every composition operator is bounded from $A_{\alpha}^{\psi}\left(\mathbb{B}_{M}\right)$ into itself (note by passing that it gives (6) for the value $M$). Moreover, (4) trivially implies that the canonical embedding from $A_{\alpha}^{\psi}\left(\mathbb{B}_{M}\right)$ into $L_{\alpha}^{\psi}$ is order-bounded into $L_{\alpha}^{\psi}$. Thus, Lemma \ref{lem-OB-gene} gives (5). We divide the proof of (5) $\Rightarrow$ (1) into two parts. First, we show that whatever $N>1$, (6) always imply (1). Since (5) $\Rightarrow$ (6) whenever $M=N$, it will give (5) $\Rightarrow$ (1) in the case $M>1$. Because (6) trivially implies (8), we only have to prove (8) $\Rightarrow$ (1). We recall that $\varphi$ is defined in (\ref{eq-symbol}) by $\varphi(z)= \left(\pi(z),0'\right)$, where $\pi(z)=N^{N/2}z_{1}z_{2}\ldots z_{N}$. Now, as
computed in \cite[Pages 904-905]{maccluer_angular_1986} (by the means of a result due to Ahern \cite[Theorem 1]{Ahern1983}),
\begin{equation}\label{ahern}
\mu_{\varphi,\alpha}\left(S\left(e_{1},h\right)\right)\sim h^{\frac{2\alpha+N+3}{2}}.
\end{equation}
Hence, according to Theorem \ref{thm|carac-cont-embedding-H-O}, $C_{\phi_{\beta}}$
is unbounded on $A_{\alpha}^{\psi}\left(\mathbb{B}_{N}\right)$ whenever
for any $A>0$, there exists $\left(h_{n}\right)_{n}$ decreasing
to $0$ such that
\[
\frac{1}{\psi\left(A\psi^{-1}\left(1/h_{n}^{N+\alpha+1}\right)\right)}=o\left(h_{n}^{\frac{2\alpha+N+3}{2}}\right)\mbox{ as }n\rightarrow\infty.
\]
Using that $\psi^{-1}$ is increasing and setting $y_{n}=A\psi^{-1}\left(1/h_{n}^{N+\alpha+1}\right)$,
this condition is realized as soon as for every $A>0$, every $C>0$,
and every $y_{0}>0$ there exists $y\geq y_{0}$ such that
\[
\psi\left(y\right)^{\frac{2N+2\alpha+2}{2\alpha+N+3}}\geq C\psi\left(\frac{y}{A}\right).
\]
Now, since $N>1$, we have $\frac{2N+2\alpha+2}{2\alpha+N+3}>1$. By Proposition \ref{prop|prop-Orlicz-growth}, the last
condition is satisfied if $\psi$ does not belong to the $\Delta^{2}$--class,
which proves (8) $\Rightarrow$ (1) for any value of $N>1$.

We now turn to the proof of (5) $\Rightarrow$ (1) in the case $M=1$. It is similar to the previous one but even simpler: it is enough to take $\phi(z)=z$, $z\in \mathbb{D}$. Indeed, in this case, $\mu_{\phi}(C_{\alpha}(h))=1-v_{\alpha}(D(0,1-h))\sim h^{1+\alpha}$ as $h$ tends to $0$. Now, since $\frac{2+\alpha}{1+\alpha} >1$, we can prove as above that, if $\psi$ does not satisfy the $\Delta^2$--condition, then for any $A>0$, there exists $(h_n)_n$ decreasing to $0$ such that
\[
\frac{1}{\psi\left(A\psi^{-1}\left(1/h_{n}^{2+\alpha}\right)\right)}=o\left(h_{n}^{1+\alpha}\right)\mbox{ as }n\rightarrow\infty.
\]
This means that $C_{\phi}$ is not order bounded on $A_{\alpha}^{\psi}(\B_N)$ by (1)--(a) of Theorem \ref{thm|thm_2nd_applic}. Thus (5) $\Rightarrow$ (1) also holds in the case $M=1$.

At this point, we have proven  (1) $\Leftrightarrow$ (2) $\Leftrightarrow$ (4) $\Leftrightarrow$ (5) . To get that (3) is also equivalent to the previous assertions, it is enough to recall that (1) $\Rightarrow$ (3) was also shown in Paragraph \ref{first-carac-parag}, and to observe that (3) $\Rightarrow$ (4) can be obtained as (2) $\Rightarrow$ (4) in Lemma \ref{lem-main-thm-carac}.

To finish, we remark that (6) $\Leftrightarrow$ (7) is contained in Theorem \ref{thm|weak-star-topo-H-O} (4), that (1) $\Rightarrow$ (6) is given by Corollary \ref{coro|Cphi-bounded-BO-Delta2}, that (6) $\Rightarrow$ (8) is trivial, and that (8) $\Rightarrow$ (1) has been proven just above. This concludes the proof.
\end{proof}
\begin{unremark}1) Note that the symbol $\varphi$ appearing in the eighth assertion of Theorem \ref{thm|thm-equiv-OB-B-Delta2} is the simplest and most classical example of symbol inducing an unbounded composition operator on $H^2(\B_N)$ and $A_{\alpha}^2(\B_N)$. One of the striking aspect of Theorem \ref{thm|thm-equiv-OB-B-Delta2} is that the boundedness of this composition operator on $A_{\alpha}^{\psi}\left(\mathbb{B}_{M}\right)$ entirely determines the \emph{nature} of this space, and the boundedness of \emph{all} composition operators on it.

2) In the previous theorem, the eight assertions are also equivalent to the following nineth one:
\begin{enumerate}\setcounter{enumi}{8}\item There exists an essential (radial) weight $v$ such that $A_{\alpha}^{\psi}\left(\mathbb{B}_{M}\right)=H_{v}^{\infty}\left(\mathbb{B}_{M}\right)$ with equivalent norms.
\end{enumerate}
Indeed, the proof of (9) $\Rightarrow$ (4) works the same as that of (2) $\Rightarrow$ (4) (or equivalently (1) $\Rightarrow$ (2) in Lemma \ref{lem-main-thm-carac}).

3) As already said in Remark \ref{rem|rem-HO-growth}, Assertion (2) in Theorem \ref{thm|thm-equiv-OB-B-Delta2} is never true for Hardy-Orlicz spaces, so the previous theorem does not hold for $\alpha =-1$. In fact, Assertions (2), (3), (4) and (5) do not hold for $H^{\psi}\left(\mathbb{B}_{N}\right)$. Nevertheless we know after \cite[Theorem 3.7]{charpentier_composition_2011}
that every composition operator is bounded on $H^{\psi}\left(\mathbb{B}_{N}\right)$
whenever $\psi$ satisfies the $\Delta^{2}$--condition. Moreover we can
prove as in Theorem \ref{thm|thm-equiv-OB-B-Delta2} that every composition operator is bounded on $H^{\psi}\left(\mathbb{B}_{N}\right)$ iff $\psi$ satisfying the $\Delta^{2}$--condition, and iff the single composition operator $C_{\varphi}$ is bounded on $H^{\psi}\left(\mathbb{B}_{N}\right)$, where $\varphi$ is given by (\ref{eq-symbol}) (the estimate (\ref{ahern}) also holds for $\alpha =-1$). Actually, these properties are also equivalent to the boundedness on $H^{\psi}\left(\mathbb{B}_{N}\right)$ of the single composition operator induced by the symbol $\widetilde{\varphi}(z)=\left(\theta(z),0'\right)$,
where $\theta$ is a non-constant inner function of $\mathbb{B}_{N}$
vanishing at the origin \cite{alexandrov_existence_1983}. Since $\theta$ is measure-preserving
as a map from $\partial\mathbb{B}_{N}$ to $\partial\mathbb{D}$,
we get
\[
\sigma_{N}\widetilde{\varphi}^{-1}\left(\mathcal{S}\left(e_{1},h\right)\right)=\sigma_{N}\left(\theta^{-1}\left(\mathcal{S}\left(1,h\right)\right)\right)=h.
\]
Thus $C_{\widetilde{\varphi}}$ is unbounded on $H^{\psi}\left(\mathbb{B}_{N}\right)$
whenever for every $A>0$, there exists $\left(h_{n}\right)_{n}$
decreasing to $0$ such that
\[
\frac{1}{\psi\left(A\psi^{-1}\left(1/h_{n}^{N}\right)\right)}=o\left(h_{n}\right)\mbox{ as }n\rightarrow\infty
\]
which is, as above, satisfied whenever $\psi$ does not belong to
the $\Delta^{2}$--class, keeping in mind that $N>1$.
\end{unremark}

\subsection{Compactness and order boundedness}

\subsubsection{Two direct consequences of Theorem \ref{thm|thm-equiv-OB-B-Delta2}}

In this paragraph we derive from Theorem \ref{thm|thm-equiv-OB-B-Delta2} and from known results some new statements (not necessarily new results) about the compactness of composition operators on small weighted Banach spaces and small Bergman-Orlicz spaces.

First of all, up to now, no estimate of the essential norm $\left\Vert C_{\phi} \right\Vert _e$ of the composition operator $C_{\phi}$ on Bergman-Orlicz spaces is known, except when $\psi(x)=x^p$, $1\leq p\leq \infty$ (see \cite{charpentier_essential_2012} and the references therein). In \cite{bonet_essential_1999} the authors estimate $\left\Vert C_{\phi} \right\Vert _e$ for composition operators acting on weighted Banach spaces on certain domains in the complex plane, including the unit disc. This, together with Lemma \ref{lem|weight-essential-ball-spec-case} and Theorem \ref{thm|thm-equiv-OB-B-Delta2}, provides us with the estimates in Equation \eqref{norm-ess} below. We omit the proof.

\begin{corollary}Let $\alpha > -1$, let $\psi$ be an Orlicz function satisfying the $\Delta^2$--condition, and let $\phi$ be a holomorphic self-map of $\D$. Then
\begin{equation}\label{norm-ess}\left\Vert C_{\phi} \right\Vert _{e,A_{\alpha}^{\psi}} \simeq \lim _{r\rightarrow 1}\sup_{|\phi(z)|>r}\frac{\psi^{-1}\left(1/\left(1-\left|\phi\left(z\right)\right|\right)\right)}{\psi^{-1}\left(1/\left(1-\left|z\right|\right)\right)} \simeq \limsup _{n\rightarrow \infty}\frac{\left\Vert \phi^n \right\Vert _{A_{\alpha}^{\psi}}}{\left\Vert z^n \right\Vert _{A_{\alpha}^{\psi}}}.
\end{equation}
In particular $C_{\phi}$ is compact on $A_{\alpha}^{\psi}(\D)$ iff
\begin{equation}\label{ess1}\limsup_{|z|\rightarrow 1}\frac{\psi^{-1}\left(1/\left(1-\left|\phi\left(z\right)\right|\right)\right)}{\psi^{-1}\left(1/\left(1-\left|z\right|\right)\right)}=0
\end{equation}
or iff
\begin{equation}\label{ess2}\limsup _{n\rightarrow \infty}\frac{\left\Vert \phi^n \right\Vert _{A_{\alpha}^{\psi}}}{\left\Vert z^n \right\Vert _{A_{\alpha}^{\psi}}}=0.
\end{equation}
\end{corollary}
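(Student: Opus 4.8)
The plan is to transfer the essential-norm computation from the Bergman--Orlicz space to a weighted Banach space, where such computations are classical. Since $\psi\in\Delta^2$, Theorem~\ref{thm|THM1} identifies $A_\alpha^\psi(\D)$ with the growth space $H_{w^\psi}^\infty(\D)$ up to an equivalent norm (this is also part of the chain (1)$\Leftrightarrow$(3) in Theorem~\ref{thm|thm-equiv-OB-B-Delta2}). As the essential norm of an operator is invariant, up to two-sided constants, under an equivalent renorming of the space, I would first reduce to
\[
\left\Vert C_\phi\right\Vert_{e,A_\alpha^\psi}\simeq\left\Vert C_\phi\right\Vert_{e,H_{w^\psi}^\infty}.
\]
Everything then follows by reading off the value of the right-hand side from the essential-norm formulae for composition operators on weighted Banach spaces of the disc obtained in \cite{bonet_essential_1999}.

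Concretely, for a typical radial weight $v$ and a bounded $C_\phi$ on $H_v^\infty(\D)$, \cite{bonet_essential_1999} yields
\[
\left\Vert C_\phi\right\Vert_{e,H_v^\infty}\simeq\lim_{r\to1}\sup_{|\phi(z)|>r}\frac{v(z)}{\widetilde v(\phi(z))}\simeq\limsup_{n\to\infty}\frac{\left\Vert\phi^n\right\Vert_v}{\left\Vert z^n\right\Vert_v},
\]
where $\widetilde v$ is the associated weight of Lemma~\ref{lem|weight-essential-ball-spec-case}. I would apply this with $v=w^\psi$. By Lemma~\ref{lem|weight-essential-ball-spec-case} (used with $\gamma=1\geq N$) the weight $w^\psi$ is essential, so $\widetilde{w^\psi}$ may be replaced by $w^\psi$ at the cost of a constant; the first quotient then reads $w^\psi(z)/w^\psi(\phi(z))=\psi^{-1}(1/(1-|\phi(z)|))/\psi^{-1}(1/(1-|z|))$, which is exactly the middle term of \eqref{norm-ess}. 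The monomial expression is obtained from the same norm equivalence $\left\Vert\cdot\right\Vert_{A_\alpha^\psi}\simeq\left\Vert\cdot\right\Vert_{w^\psi}$ applied to $\phi^n$ and $z^n$. Finally \eqref{ess1} and \eqref{ess2} are immediate, since $C_\phi$ is compact on $A_\alpha^\psi(\D)$ exactly when $\left\Vert C_\phi\right\Vert_{e,A_\alpha^\psi}=0$.

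The steps above are essentially bookkeeping: the invariance of the essential norm under equivalent norms and the passage from $\widetilde{w^\psi}$ to $w^\psi$ are both harmless. The one point that needs care is that the ratio in \eqref{norm-ess} carries the exponent $\gamma=1$, whereas the natural point-evaluation weight on $\D$ has exponent $\gamma=N+\alpha+1=\alpha+2$. I expect this to be the main (though mild) obstacle, and I would resolve it exactly as in Paragraph~\ref{first-carac-parag}: under the $\Delta^2$--condition Lemma~\ref{lem|lemma-tech-charac-BO} gives $\psi^{-1}(y^\gamma)\simeq\psi^{-1}(y)$ for large $y$, hence $w^{\psi,\gamma}\simeq w^\psi$, so the two admissible exponents produce comparable quotients and the essential weight may indeed be taken with $\gamma=1$.
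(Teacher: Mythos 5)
Your proposal is correct and follows exactly the route the paper indicates for this (omitted) proof: identify $A_\alpha^\psi(\D)$ with $H_{w^\psi}^\infty(\D)$ via Theorem \ref{thm|thm-equiv-OB-B-Delta2}, invoke the essential-norm formulae of \cite{bonet_essential_1999} on the growth space, and use Lemma \ref{lem|weight-essential-ball-spec-case} together with Lemma \ref{lem|lemma-tech-charac-BO} to replace the associated weight $\widetilde{w^{\psi,\gamma}}$ by $w^\psi$. Your handling of the exponent discrepancy $\gamma=\alpha+2$ versus $\gamma=1$ is precisely the reconciliation already carried out in Paragraph \ref{first-carac-parag}, so there is nothing further to add.
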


Similar estimates also appear in \cite{hyvarinen_essential_2012,montes-rodriguez_weighted_2000} where weighted composition operators acting on weighted Banach spaces of the unit disc are considered. Note that the equivalence between (\ref{ess1}) and the compactness of $C_{\phi}$ also holds in the unit ball $\B_N$ instead of $\D$, as a particular case of \cite[Theorem 3.11]{charpentier_compact_2013}. Indeed, it is shown there that, under some mild condition, this characterization holds not only for $\B_N$, but also on the whole range of weighted Begman-Orlicz spaces, whenever the Orlicz functions satisfy the $\nabla _0$--condition (yet the essential norm was not estimated, even in $\D$). Moreover, we believe that some part of the proof given in \cite{bonet_essential_1999} smoothly extends to $\B_N$, so that it is quite likely that the first estimate in (\ref{norm-ess}) is also true in the unit ball.

\medskip{}

Second it was proven in \cite[Theorem 3.1]{charpentier_compact_2013} that if $C_{\phi}$ is compact on $A_{\alpha}^{\psi}(\B_N)$ for every Orlicz function $\psi$, then $C_{\phi}$ is compact on $H^{\infty}$. Actually it is not difficult to adapt the proof of that theorem (more specifically that of Lemma 3.2 there) to show that if $C_{\phi}$ is compact on $A_{\alpha}^{\psi}(\B_N)$ for every Orlicz function $\psi$ \emph{satisfying the $\Delta^2$--condition}, then $C_{\phi}$ is compact on $H^{\infty}$. We deduce from that and Theorem \ref{thm|thm-equiv-OB-B-Delta2} the following.

\begin{corollary}Let $\phi$ be a holomorphic self-map of $\B_N$. If $C_{\phi}$ is compact on $H_v^{\infty}(\B_N)$ for every weight $v$ of the form $v=w^{\psi}$ with $\psi$ an Orlicz function satisfying the $\Delta^2$--condition, then $C_{\phi}$ is compact on $H^{\infty}$.
\end{corollary}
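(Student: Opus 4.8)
The plan is to reduce the statement to the identification of \emph{small} Bergman-Orlicz spaces with growth spaces furnished by Theorem \ref{thm|THM1} (equivalently the equivalence (1)$\Leftrightarrow$(3) of Theorem \ref{thm|thm-equiv-OB-B-Delta2}), and then to invoke the variant of \cite[Theorem 3.1]{charpentier_compact_2013} recalled in the paragraph immediately preceding the statement. The argument is essentially formal, so I expect it to be short.

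First I would fix once and for all some $\alpha>-1$. For any Orlicz function $\psi$ satisfying the $\Delta^{2}$--condition, Theorem \ref{thm|THM1} asserts that $A_{\alpha}^{\psi}(\B_N)=H_{w^{\psi}}^{\infty}(\B_N)$ with equivalent norms; in particular the identity map is a Banach space isomorphism between these two spaces. Consequently $C_{\phi}$ is compact on $A_{\alpha}^{\psi}(\B_N)$ if and only if it is compact on $H_{w^{\psi}}^{\infty}(\B_N)$, since compactness of a bounded operator is invariant under an isomorphic change of the norm on the space.

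Next I would unwind the hypothesis. Assuming that $C_{\phi}$ is compact on $H_{v}^{\infty}(\B_N)$ for every weight $v=w^{\psi}$ with $\psi\in\Delta^{2}$, the previous paragraph translates this, via Theorem \ref{thm|THM1}, into the assertion that $C_{\phi}$ is compact on $A_{\alpha}^{\psi}(\B_N)$ for every Orlicz function $\psi$ in the $\Delta^{2}$--class. It then remains to apply the adaptation of \cite[Theorem 3.1]{charpentier_compact_2013} described just above the statement---whose proof proceeds through \cite[Lemma 3.2]{charpentier_compact_2013}---which guarantees that if $C_{\phi}$ is compact on $A_{\alpha}^{\psi}(\B_N)$ for every $\Delta^{2}$ Orlicz function $\psi$, then $C_{\phi}$ is compact on $H^{\infty}(\B_N)$. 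This yields the conclusion.

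The hard part, such as it is, is not within this corollary but is imported: the only genuine analytic input is the cited compactness-transfer theorem from \cite{charpentier_compact_2013}, which I take as established. The one step that deserves a word of care is the norm-equivalence passage: because Theorem \ref{thm|THM1} delivers equivalent (not equal) norms, one must note explicitly that compactness is an isomorphism-invariant property, so that restricting the family of test spaces to the growth spaces $H_{w^{\psi}}^{\infty}$ loses nothing compared with the full family $\{A_{\alpha}^{\psi}:\psi\in\Delta^{2}\}$ required to run the transfer theorem.
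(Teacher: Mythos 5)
Your proposal is correct and follows exactly the paper's route: the paper also deduces the corollary by combining the identification $A_{\alpha}^{\psi}(\B_N)=H_{w^{\psi}}^{\infty}(\B_N)$ for $\psi\in\Delta^{2}$ (Theorem \ref{thm|THM1}, i.e.\ the relevant implication of Theorem \ref{thm|thm-equiv-OB-B-Delta2}) with the adapted version of \cite[Theorem 3.1]{charpentier_compact_2013} stated just before the corollary. Your explicit remark that compactness is invariant under the equivalent-norm identification is a sound (if routine) point of care that the paper leaves implicit.
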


This corollary is in fact a slight refinement of \cite[Corollary 3.8]{bonet_composition_1998}.

\subsubsection{Compactness and order boundedness}


The main result of this paragraph states as follows:
\begin{theorem}\label{main-thm-compacntess-Deltaup2}
Let $\alpha\geq-1$, let $\psi$ be an Orlicz function satisfying
the $\Delta^{2}$--condition and let $\phi:\mathbb{B}_{N}\rightarrow\mathbb{B}_{N}$
be holomorphic. The following assertions are equivalent.
\begin{enumerate}
\item $C_{\phi}$ is compact on $A_{\alpha}^{\psi}\left(\mathbb{B}_{N}\right)$;
\item $C_{\phi}$ acting on $A_{\alpha}^{\psi}\left(\mathbb{B}_{N}\right)$
is order bounded into $M_{\alpha}^{\psi}$;
\item $C_{\phi}$ is weakly compact on $A_{\alpha}^{\psi}\left(\mathbb{B_N}\right)$.
\end{enumerate}
\end{theorem}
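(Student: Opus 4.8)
The plan is to establish the two equivalences (1)$\Leftrightarrow$(2) and (1)$\Leftrightarrow$(3) separately, after disposing of the two trivial implications. Since $\psi\in\Delta^{2}$ forces $\psi\in\nabla_{2}$, Theorem \ref{thm|weak-star-topo-H-O}(4) is at my disposal: $A_{\alpha}^{\psi}(\B_N)=\left(AM_{\alpha}^{\psi}(\B_N)\right)^{**}$ and $C_{\phi}=\left(C_{\phi}|_{AM_{\alpha}^{\psi}}\right)^{**}$. The implication (2)$\Rightarrow$(1) is exactly Proposition \ref{OBMpsiimpliescomp}, and (1)$\Rightarrow$(3) holds because a compact operator is weakly compact, so it remains to prove (1)$\Rightarrow$(2) and (3)$\Rightarrow$(1). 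I shall carry out the argument first for $\alpha>-1$, where Theorem \ref{thm|THM1} lets me work inside the growth space, identifying $A_{\alpha}^{\psi}=H_{w^{\psi}}^{\infty}$ and $AM_{\alpha}^{\psi}=H_{w^{\psi}}^{0}$, and indicate at the end the changes needed for the Hardy case $\alpha=-1$.

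For (1)$\Rightarrow$(2), recall the extremal functions $g_{a}=2^{-\gamma}\psi^{-1}\!\left((1-|a|)^{-\gamma}\right)f_{a}$ from the proof of Corollary \ref{cor|corollaire_Berezin_cont_comp} (with $\gamma=N+\alpha+1$); they lie in the unit ball of $A_{\alpha}^{\psi}$ and tend to $0$ uniformly on compacta as $|a|\to1$. A compact $C_{\phi}$ therefore sends $g_{a}\circ\phi$ to $0$ in norm, since any norm cluster point vanishes pointwise; evaluating $\|g_{a}\circ\phi\|_{w^{\psi}}\ge|g_{a}(\phi(z))|\,w^{\psi}(z)$ at $a=\phi(z)$ and using $|f_{a}(a)|\approx1$ yields the pointwise decay $\psi^{-1}(1/(1-|\phi(z)|))/\psi^{-1}(1/(1-|z|))\to0$ as $|z|\to1$. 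It then remains to integrate: writing $u=\psi^{-1}(1/(1-|\phi|)^{\gamma})$ and fixing $C>0$, I choose $b>\gamma/(\alpha+1)$ and invoke the $\Delta^{2}$--condition (Proposition \ref{prop|prop-Orlicz-growth}(3)) in the form $\psi(x/C_{b})\le\psi(x)^{1/b}$. Combined with the decay above and Lemma \ref{lem|lemma-tech-charac-BO}, this gives $\psi(Cu(z))\le(1-|z|)^{-\gamma/b}$ off a compact set, which is $v_{\alpha}$--integrable precisely because $b>\gamma/(\alpha+1)$. Hence $u\in M_{\alpha}^{\psi}$ and (2) follows from Proposition \ref{prop|prop-carac-OB-evaluation}(2). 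The point worth stressing is that the crude bound $\psi(Cu)\le(1-|z|)^{-\gamma}$ is \emph{not} integrable; it is the fast growth encoded in $\Delta^{2}$ that turns the little-o decay into a genuine integrable gain.

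For (3)$\Rightarrow$(1), I first convert weak compactness into a containment. By Gantmacher's theorem, $C_{\phi}=\left(C_{\phi}|_{AM_{\alpha}^{\psi}}\right)^{**}$ is weakly compact iff $C_{\phi}|_{AM_{\alpha}^{\psi}}$ is, and the latter holds iff its bidual maps $A_{\alpha}^{\psi}$ into $AM_{\alpha}^{\psi}$; that is, (3) is equivalent to $C_{\phi}(A_{\alpha}^{\psi})\subseteq AM_{\alpha}^{\psi}=H_{w^{\psi}}^{0}$. Now I bring in the finite domination granted by $\Delta^{2}$: by Theorem \ref{thm|thm-equiv-OB-B-Delta2}(4) (equivalently Lemma \ref{lem-main-thm-carac}) there are $f_{1},\dots,f_{d}\in A_{\alpha}^{\psi}$ with $|f|\le\sum_{j}|f_{j}|$ for every $f$ in the unit ball. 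Applied to $g_{a}$, this gives $|g_{a}(\phi(z))|\,w^{\psi}(z)\le\sum_{j}|(C_{\phi}f_{j})(z)|\,w^{\psi}(z)$, a bound \emph{independent of} $a$ whose right-hand side tends to $0$ as $|z|\to1$ because each $C_{\phi}f_{j}$ lies in $H_{w^{\psi}}^{0}$. Splitting $\sup_{z}$ into a boundary layer (controlled uniformly in $a$ by the above) and a compact part (where $g_{a}\to0$ uniformly), I obtain $\|g_{a}\circ\phi\|_{A_{\alpha}^{\psi}}\to0$ as $|a|\to1$, which by Corollary \ref{cor|corollaire_Berezin_cont_comp} is exactly the compactness of $C_{\phi}$. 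This is the only place where Carleson-measure techniques enter, through Corollary \ref{cor|corollaire_Berezin_cont_comp}.

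I expect the heart of the matter, and the main obstacle, to be this last implication (3)$\Rightarrow$(1), the weak-compactness/compactness dichotomy. Its difficulty is that $C_{\phi}(A_{\alpha}^{\psi})\subseteq AM_{\alpha}^{\psi}$ only furnishes, for each \emph{fixed} $f$, the decay $|f(\phi(z))|\,w^{\psi}(z)\to0$, whereas compactness demands this uniformly over the whole family $\{g_{a}\}$; the finite domination property, available precisely because $\psi\in\Delta^{2}$, is what upgrades the per-function vanishing to the required uniform estimate. Finally, for the Hardy case $\alpha=-1$ the growth-space identification of Theorem \ref{thm|THM1} fails (Remark \ref{rem|rem-HO-growth}), so both implications must instead be run through the Carleson-window characterizations of compactness (Theorem \ref{thm|carac-cont-embedding-H-O}) and of order boundedness (Theorem \ref{thm|thm_2nd_applic}), with the $\Delta^{2}$--condition absorbing the dimensional factor lost when passing from Carleson windows to coronas; this recovers, and extends to $\B_N$, the scheme of \cite[Theorem 3.24]{lefevre_compact_2009}.
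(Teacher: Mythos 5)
Your treatment of the case $\alpha>-1$ is correct and takes a genuinely different route from the paper. For (1)$\Rightarrow$(2), the paper never uses the pointwise decay of $\psi^{-1}(1/(1-|\phi(z)|))/\psi^{-1}(1/(1-|z|))$; instead it extracts the Carleson-window estimate \eqref{todaytricebis} from the Berezin test functions, passes to coronas at the cost of a factor $h^{-(N+\alpha+1)}$, absorbs that factor with the $\Delta^{2}$--condition, and then invokes Theorem \ref{thm|thm_2nd_applic} (2)--(b), whose proof rests on the weak-Orlicz space machinery. Your direct integration of $\psi\bigl(C\psi^{-1}(1/(1-|\phi|)^{\gamma})\bigr)$ against $v_{\alpha}$, turning the little-o decay into the bound $(1-|z|)^{\alpha-\gamma/b}$ with $b>\gamma/(\alpha+1)$, is shorter and bypasses both the corona reduction and Proposition \ref{prop-weak-Orlicz}; the price is that it has no analogue when $\alpha+1=0$. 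For (3)$\Rightarrow$(1), the paper applies the quantitative criterion of \cite[Theorem 4]{lefevre_criterion_2008}, namely $\Vert f\circ\phi\Vert_{A_{\alpha}^{\psi}}\leq K_{\varepsilon}\Vert f\Vert_{L_{\alpha}^{1}}+\varepsilon\Vert f\Vert_{A_{\alpha}^{\psi}}$ tested on $f_{a}$, whereas you use Gantmacher plus the finite domination property of Theorem \ref{thm|thm-equiv-OB-B-Delta2} (4) to upgrade the per-function vanishing $C_{\phi}f_{j}\in H_{w^{\psi}}^{0}$ to a bound uniform in $a$. That argument is sound and is a nice illustration of what the $\Delta^{2}$--condition buys structurally.

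The genuine gap is the implication (3)$\Rightarrow$(1) when $\alpha=-1$. Your only mechanism for converting weak compactness into the uniform decay $\Vert g_{a}\circ\phi\Vert\to0$ is the finite domination property, and by Remark \ref{rem|rem-HO-growth} and Theorem \ref{thm|thm-equiv-OB-B-Delta2} that property \emph{never} holds for $H^{\psi}(\mathbb{B}_{N})$, whatever the Orlicz function. Your closing sentence proposes to reroute ``both implications'' through the Carleson-window characterizations, but those characterize compactness and order boundedness, not weak compactness: Gantmacher still gives you $C_{\phi}(H^{\psi})\subseteq HM^{\psi}$, yet this is a statement about each fixed $f$ separately, and nothing in your toolkit recovers uniformity over the family $\{g_{a}\}$ from it. (The reference you lean on, \cite[Theorem 3.24]{lefevre_compact_2009}, covers only the equivalence of (1) and (2); the weak-compactness statement there is Theorem 4.21, proved by other means.) The paper closes exactly this hole with \cite[Theorem 4]{lefevre_criterion_2008}, which works uniformly in $\alpha\geq-1$ because $\Vert f_{a}\Vert_{L_{\alpha}^{1}}=(1-|a|)^{N+\alpha+1}$ decays fast enough to kill the factor $\psi^{-1}\bigl(1/(1-|a|)^{N+\alpha+1}\bigr)$. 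You need either that criterion or some substitute for it in the Hardy--Orlicz case.
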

\begin{unremark}\label{remark-OB-comp-Delta^1}(1) The two first equivalences were obtained in \cite[Theorem 3.24]{lefevre_compact_2009} for $N=1$ and $\alpha =-1$.\\
(2) The $\Delta^2$--Condition in Theorem \ref{main-thm-compacntess-Deltaup2} is certainly important for (1)$\Leftrightarrow$(2). Indeed it was proven in \cite[Theorem 4.22]{lefevre_compact_2009} that there exists an Orlicz function $\psi\in \Delta ^1$, and an analytic self-map $\phi$ of $\D$ such that the composition operator $C_{\phi}:H^{\psi}(\D)\rightarrow H^{\psi}(\D)$ is not order bounded into $M^{\psi}(\partial \D)$, though it is compact.\\
(3) Yet the $\Delta ^2$--condition has no importance for (2)$\Leftrightarrow$(3) which in fact holds for a larger class of Orlicz functions, namely the $\Delta ^0$--class, see \cite[Theorem 4.21]{lefevre_compact_2009}. But this equivalence fails if $\psi\in \Delta _2\cap \nabla _2$ because, in this case, $A_{\alpha}^{\psi}(\B_N)$ is reflexive and so every bounded operator in weakly compact.\\
(4) The equivalence between compactness and weak compactness for a composition operator is more generally related to the duality properties of the spaces on which it acts. For instance, such equivalence was also observed in the context of weighted Banach spaces of analytic functions on the unit disc \cite{bonet_essential_1999}.
\end{unremark}
\begin{proof}[Proof of Theorem \ref{main-thm-compacntess-Deltaup2}]We present it only for $\alpha >-1$, as it is again very similar if $\alpha =-1$. The proof of (1) $\Rightarrow$ (2) $\Rightarrow$ (3) is an adaptation of the proof of Theorem 3.24 in \cite{lefevre_compact_2009}.  By Proposition \ref{OBMpsiimpliescomp}, we already know (2) $\Rightarrow$ (1). To prove the converse, it is enough to show, according to Theorem \ref{thm|thm_2nd_applic},
that the condition
\begin{equation}
\left\Vert f_{a}\circ\phi\right\Vert _{A_{\alpha}^{\psi}\left(\mathbb{B}_{N}\right)}=o_{\left|a\right|\rightarrow1}\left(\frac{1}{\psi^{-1}\left(1/\left(1-\left|a\right|\right)^{N+\alpha+1}\right)}\right)\label{eq|eq_last_thm_O-B}
\end{equation}
implies
\[
\mu_{\phi}\left(C_{\alpha}\left(h\right)\right)\leq\frac{C_{A}}{\psi\left(A\psi^{-1}\left(1/h^{N+\alpha+1}\right)\right)}
\]
for every $A>0$, every $h\in\left(0,h_{A}\right)$ and some $h_{A}\in\left(0,1\right)$ and $C_A>0$.
(note that we may just require the preceding to be true for any $A$ bigger than some arbitrary positive constant). Now let us remind that we showed, at the end of the proof of Corollary \ref{cor|corollaire_Berezin_cont_comp}, that if Condition \eqref{eq|eq_last_thm_O-B} holds then for every $\varepsilon > 0$, there exists $h_{\varepsilon}\in (0,1)$ such that for every $\zeta \in \mathbb{S}_N$ and every $h\in (0,h_{\varepsilon})$ we have
\begin{equation}\label{todaytricebis}\mu_{\phi,\alpha}(S(\zeta, h))\leq \frac{1}{\psi\left(\frac{K}{\varepsilon}\psi^{-1}\left(1/h^{N+\alpha+1}\right)\right)}.\end{equation}
Now it is a classical geometric fact that there exists a constant $M>0$, depending only on $\alpha$ such that for any $h\in\left(0,1\right)$, there exists $\zeta\in\mathbb{S}_{N}$ such that
\[
\mu\left(C_{\alpha}\left(h\right)\right)\leq\frac{M}{h^{N+\alpha+1}}\mu\left(S\left(\zeta,Kh\right)\right)\text{ \cite{rudin_function_1980}}
\]
Thus we get that for every $\varepsilon > 0$, there exists $h_{\varepsilon}\in (0,1)$ such that for every $h\in (0,h_{\varepsilon})$
$$\mu\left(C_{\alpha}\left(h\right)\right)\leq \frac{M}{h^{N+\alpha+1}\psi\left(\frac{K}{\varepsilon}\psi^{-1}\left(1/h^{N+\alpha+1}\right)\right)}.$$
Since $\psi$ satisfies the $\Delta^2$--condition, there exists a constant $C>0$ such that for every $x$ large enough, $(\psi(x))^2\leq \psi(Cx)$. Then for any $\varepsilon>0$ small such that $K/(C\varepsilon)>1$, there exists $h'_{\varepsilon}$ such that for every $h\in (0,h'_{\varepsilon})$, we have
$$\frac{M}{h^{N+\alpha+1}\psi\left(\frac{K}{\varepsilon}\psi^{-1}\left(1/h^{N+\alpha+1}\right)\right)} \leq \frac{M\psi\left(\frac{K}{C\varepsilon}\psi^{-1}(1/h^{N+\alpha+1})\right)}{\left[\psi\left(\frac{K}{C\varepsilon}\psi^{-1}\left(1/h^{N+\alpha+1}\right)\right)\right]^2} = \frac{M}{\psi\left(\frac{K}{C\varepsilon}\psi^{-1}\left(1/h^{N+\alpha+1}\right)\right)}.$$
We conclude by setting $A=K/(C\varepsilon)$ and choosing $h_A=\min(h_{\varepsilon},h'_{\varepsilon})$.

\smallskip{}

To prove (1) $\Leftrightarrow$ (3), we need only to prove (3) $\Rightarrow$ (1) and then, according to Corollary \ref{cor|corollaire_Berezin_cont_comp}, that the weak compactness of $C_{\phi}$ on $A_{\alpha}^{\psi}\left(\mathbb{B}_N\right)$ implies
\begin{equation}\label{eqthmweak}\left\Vert f_{a}\circ\phi\right\Vert _{A_{\alpha}^{\psi}\left(\mathbb{B}_{N}\right)}=o_{\left|a\right|\rightarrow1}\left(\frac{1}{\psi^{-1}\left(1/\left(1-\left|a\right|\right)^{N+\alpha+1}\right)}\right).
\end{equation}
But this can be done exactly in the same way as in the proof of \cite[Theorem 3.20]{lefevre_compact_2009}. Indeed, by Theorem \ref{thm|weak-star-topo-H-O} (4), $C_{\phi}:AM_{\alpha}^{\psi}(\B_N)\rightarrow AM_{\alpha}^{\psi}(\B_N)$ is also weakly compact so that we can appeal to \cite[Theorem 4]{lefevre_criterion_2008} and get, for any $\varepsilon >0$, the existence of a constant $K_{\varepsilon}>0$ such that for every $f\in AM_{\alpha}^{\psi}\left(\mathbb{B}_N\right)$,
$$\left\Vert f\circ \phi\right\Vert_{A_{\alpha}^{\psi}}\leq K_{\varepsilon}\left\Vert f \right\Vert _{L_{\alpha}^1}+\varepsilon \left\Vert f \right\Vert_{A_{\alpha}^{\psi}}.$$
Taking $f=f_a$, we obtain
$$\left\Vert f_a\circ \phi\right\Vert_{A_{\alpha}^{\psi}}\leq K_{\varepsilon}(1-|a|)^{N+\alpha+1}+\frac{2^{N+\alpha+1}\varepsilon}{\psi^{-1}\left(1/(1-|a|)^{N+\alpha+1}\right)},$$
since $\left\Vert f_a \right\Vert _{L_{\alpha}^1}=(1-|a|)^{N+\alpha+1}$ (a change a variable for $\alpha > -1$ and the Cauchy formula when $\alpha = -1$). We conclude that (\ref{eqthmweak}) holds, for $\psi(x)/x \rightarrow +\infty$ as $x\rightarrow +\infty$.
\end{proof}

\section{Some geometric conditions for the order boundedness of composition
operators on Hardy-Orlicz and Bergman-Orlicz spaces}

For $\zeta\in\mathbb{S}_{N}$
and $a>1$, we recall that the Kor\'anyi approach region $\Gamma\left(\zeta,a\right)$
of angular opening $a$ is defined by
\[
\Gamma\left(\zeta,a\right)=\left\{ z\in\mathbb{B}_{N},\,\left|1-\left\langle z,\zeta\right\rangle \right|<\frac{a}{2}\left(1-\left|z\right|^{2}\right)\right\} .
\]
When $N=1$, Kor\'anyi approach regions reduce to non-tangential approach
regions (or Stolz domains) given by
\[
\Gamma\left(\zeta,a\right)=\left\{ z\in\mathbb{D},\,\left|\zeta-z\right|<\frac{a}{2}\left(1-\left|z\right|^{2}\right)\right\} .
\]

In \cite{MacCluer_95}, the author proves that if $\phi$ takes $\mathbb{B}_{N}$
into a Kor\'anyi approach region, then $C_{\phi}$ is bounded or
compact on $H^{p}\left(\mathbb{B}_{N}\right)$ (hence on $A_{\alpha}^{p}\left(\mathbb{B}_{N}\right)$
for any $\alpha\geq-1$) whenever the angular opening of the Kor\'anyi
region is not too large. This result immediately extends to the $H^{\psi}\left(\mathbb{B}_{N}\right)$
or $A_{\alpha}^{\psi}\left(\mathbb{B}_{N}\right)$ setting for Orlicz
functions $\psi$ satisfying the $\Delta_{2}$--condition \cite[Theorem 3.3]{charpentier_compact_2013}.

On the opposite side we have the following.

\begin{Thmuncount}[Theorem 3.5 of \cite{charpentier_compact_2013}]\label{thm|thm_pas_comp_delta_2_Kor_app}Let
$N\geq1$ and let $\psi$ be an Orlicz function satisfying the $\Delta^{2}$--condition.
Then, for every $\zeta\in\mathbb{S}_{N}$ and every $b>1$, there
exists a holomorphic self-map $\phi$ taking $\mathbb{B}_{N}$ into
$\Gamma\left(\zeta,b\right)$, such that $C_{\phi}$ is not compact
on $H^{\psi}\left(\mathbb{B}_{N}\right)$ (and hence not order-bounded
into $M^{\psi}\left(\mathbb{S}_{N},\sigma _N\right)$). The same holds for
the weighted Bergman-Orlicz spaces in dimension $1$.\end{Thmuncount}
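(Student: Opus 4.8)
The plan is to build the symbol explicitly as the lift of a one–dimensional \emph{sector map}, and to read off non-compactness from the Carleson characterisation of Theorem \ref{thm|carac-cont-embedding-H-O} together with the growth estimate of Proposition \ref{prop|prop-Orlicz-growth}(3). First I would reduce to $\zeta=e_1$: since $\sigma_N$ and $v_\alpha$ are rotation invariant, any unitary $U$ with $Ue_1=\zeta$ induces an isometry $f\mapsto f\circ U$ of $A_\alpha^\psi(\B_N)$ which conjugates $C_\phi$ into $C_{U^{-1}\phi(U\cdot)}$, and $U^{-1}\Gamma(\zeta,b)=\Gamma(e_1,b)$; so it suffices to treat $\zeta=e_1$. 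Note also that, because $\psi\in\Delta^2$, $C_\phi$ is automatically bounded (Corollary \ref{coro|Cphi-bounded-BO-Delta2}, and its Hardy analogue), so the whole content is the \emph{failure} of compactness.

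For the construction, fix $\theta\in(0,1)$ with $\cos(\theta\pi/2)>1/b$ and a small $c>0$, and set
\[
\sigma(z)=1-c(1-z)^{\theta},\qquad z\in\D,
\]
using the principal branch of $(1-z)^{\theta}$. Since $1-z$ runs over the disc $\{|u-1|<1\}\subset\{\operatorname{Re}u>0\}$, the power $(1-z)^{\theta}$ lands in the sector $\{|\arg|<\theta\pi/2\}$; hence $1-\sigma(z)=c(1-z)^{\theta}$ lies in a small truncated sector with vertex $0$, and a direct computation of $1-|\sigma|^{2}$ shows, for $c$ small, that $\sigma$ maps $\D$ into $\D$ and in fact into the Stolz region $\Gamma(1,b)$ (the constraint near $1$ being exactly $\cos(\arg(1-w))>1/b$). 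I would then set $\phi(z)=(\sigma(z_1),0')$; this is a self-map of $\B_N$ with $\langle\phi(z),e_1\rangle=\sigma(z_1)$ and $|\phi(z)|=|\sigma(z_1)|$, so $\phi(z)\in\Gamma(e_1,b)$ for every $z$, and for $N=1$ it is simply $\sigma$ itself.

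The crux is the sharp measure estimate. Because $1-\sigma(z)=c(1-z)^{\theta}$ \emph{identically}, one has $\{|1-\sigma(z_1)|<h\}=\{|1-z_1|<(h/c)^{1/\theta}\}$, whence $\phi^{-1}(S(e_1,h))=S\!\big(e_1,(h/c)^{1/\theta}\big)$ (and likewise with $\mathcal{S}$ on $\overline{\B_N}$ for the boundary measure). Using the standard estimates $v_\alpha(S(e_1,\delta))\approx\delta^{N+\alpha+1}$ and $\sigma_N(\mathcal{S}(e_1,\delta))\approx\delta^{N}$, this gives
\[
\varrho_{\phi,\alpha}(h)\ \geq\ \mu_{\phi,\alpha}\big(S(e_1,h)\big)\ \gtrsim\ h^{(N+\alpha+1)/\theta},
\]
a \emph{fixed} power of $h$, valid for the Hardy case ($\alpha=-1$, every $N$) and, when $N=1$, for $A_\alpha^\psi(\D)$ with $\alpha>-1$, which are exactly the cases claimed.

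Finally I would feed this lower bound into Theorem \ref{thm|carac-cont-embedding-H-O}. Writing $\gamma=N+\alpha+1$ and $u=1/h^{\gamma}$, I choose $b'>1/\theta$ and apply Proposition \ref{prop|prop-Orlicz-growth}(3) to obtain $C>0$ with $\psi(x)^{b'}\le\psi(Cx)$ for large $x$; taking $x=\psi^{-1}(u)$ yields $\psi(C\psi^{-1}(u))\ge u^{b'}$, so that $\tfrac{1}{\psi(C\psi^{-1}(1/h^{\gamma}))}\le h^{\gamma b'}=o\big(h^{\gamma/\theta}\big)$ while $\varrho_{\phi,\alpha}(h)\gtrsim h^{\gamma/\theta}$; hence $\varrho_{\phi,\alpha}(h)>\tfrac{1}{\psi(C\psi^{-1}(1/h^{\gamma}))}$ for all small $h$. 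Thus the compactness criterion of Theorem \ref{thm|carac-cont-embedding-H-O} fails with $A_0=C$, so $C_\phi$ is not compact; by Proposition \ref{OBMpsiimpliescomp} it is \emph{a fortiori} not order bounded into $M_\alpha^\psi$. The main obstacle is the geometric tuning of $(\theta,c)$ against the opening $b$ so that the truncated sector sits inside $\Gamma(\zeta,b)$ while the identity $|1-\sigma(z)|=c|1-z|^{\theta}$ is preserved; it is precisely this identity that turns the pull-back of a Carleson window into an exact Carleson window and produces the clean power law $\varrho_{\phi,\alpha}(h)\approx h^{\gamma/\theta}$, after which the $\Delta^2$ growth estimate does the rest.
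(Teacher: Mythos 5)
This statement is imported verbatim from \cite{charpentier_compact_2013} (Theorem 3.5 there) and the present paper gives no proof of it, so there is nothing internal to compare against; your argument is nevertheless correct, and the lens-type symbol $\sigma(z)=1-c(1-z)^{\theta}$ with $\cos(\theta\pi/2)>1/b$, lifted to $\phi(z)=(\sigma(z_1),0')$ and fed into the Carleson criterion of Theorem \ref{thm|carac-cont-embedding-H-O} via Proposition \ref{prop|prop-Orlicz-growth}\,(3), is essentially the construction used in the cited source. All the key checks go through: for $c$ small the truncated sector $\{1-w=\rho e^{i\beta},\ |\beta|\le\theta\pi/2,\ \rho\le c2^{\theta}\}$ does sit inside $\Gamma(1,b)$ since the condition reads $\cos\beta>1/b+\rho/2$; the exact pull-back identity $\phi^{-1}(S(e_1,h))=S\bigl(e_1,(h/c)^{1/\theta}\bigr)$ gives $\varrho_{\phi,\alpha}(h)\gtrsim h^{(N+\alpha+1)/\theta}$ in the two cases claimed; and choosing $b'>1/\theta>1$ in Proposition \ref{prop|prop-Orlicz-growth}\,(3) yields $1/\psi\bigl(C\psi^{-1}(1/h^{N+\alpha+1})\bigr)\le h^{(N+\alpha+1)b'}=o\bigl(h^{(N+\alpha+1)/\theta}\bigr)$, which negates the compactness criterion for the single constant $A=C$, with Proposition \ref{OBMpsiimpliescomp} then ruling out order boundedness into $M^{\psi}$.
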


So, when $\psi \in \Delta^2$, the condition $\phi\left(\mathbb{B}_{N}\right)\subset\Gamma\left(\zeta,b\right)$, whatever the opening $b>1$,  becomes non-sufficient for the compactness of $C_{\phi}$ on $A_{\alpha}^{\psi}$, while it is trivially sufficient for its order boundedness into $L_{\alpha}^{\psi}$ (since in this case every composition operator is order bounded into $L_{\alpha}^{\psi}$ by Theorem \ref{thm|thm-equiv-OB-B-Delta2}).

\smallskip{}

The next result tells that if $\phi(\mathbb{B}_{N}) \subset \Gamma\left(\zeta,a\right)$ for some $\zeta\in\mathbb{S}_{N}$ and $a>1$ sufficiently small, then $C_{\phi}:A_{\alpha}^{\psi}(\B_N)\rightarrow A_{\alpha}^{\psi}(\B_N)$ is still order bounded into $L_{\alpha}^{\psi}$ when $\psi$ satisfies the $\Delta^1$--condition.
\begin{theorem}
\label{thm|thm_Kor_order-bounded}Let $\alpha\geq-1$ and let $\psi$
be an Orlicz function.
\begin{enumerate}
\item Let $\phi:\mathbb{D}\rightarrow\mathbb{D}$ be holomorphic. If $\phi\left(\mathbb{D}\right)$
is contained in some Stolz domain, then $C_{\phi}:A_{\alpha}^{\psi}(\D)\rightarrow A_{\alpha}^{\psi}(\D)$
is order bounded into $L_{\alpha}^{\psi}$.
\item Let $N>1$ and let $\phi:\mathbb{B}_{N}\rightarrow\mathbb{B}_{N}$
be holomorphic. We set $a_{N}=1/\cos\left(\frac{\pi}{2}\frac{\alpha+2}{N+\alpha+1}\right)$.
\begin{enumerate}
\item If $\phi\left(\mathbb{B}_{N}\right)\subset\Gamma\left(\zeta,\gamma\right)$
for $\gamma<a_{N}$ then $C_{\phi}:A_{\alpha}^{\psi}(\B_N)\rightarrow A_{\alpha}^{\psi}(\B_N)$ is order bounded into $L_{\alpha}^{\psi}$;
\item We assume that $\psi$ satisfies the $\Delta^{1}$--condition. If $\phi\left(\mathbb{B}_{N}\right)\subset\Gamma\left(\zeta,a_{N}\right)$,
then $C_{\phi}:A_{\alpha}^{\psi}(\B_N)\rightarrow A_{\alpha}^{\psi}(\B_N)$ is order bounded into $L_{\alpha}^{\psi}$.
\end{enumerate}
%
\end{enumerate}
\end{theorem}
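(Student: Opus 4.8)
The plan is to reduce everything, through Proposition \ref{prop|prop-carac-OB-evaluation}, to the single integrability requirement $\psi^{-1}\big((1-|\phi|)^{-(N+\alpha+1)}\big)\in L_{\alpha}^{\psi}$, and then to read the Korányi hypothesis as a decay estimate for the pull-back measure near the vertex $\zeta$. Write $\gamma=N+\alpha+1$ and $\theta=\arccos(1/a)$, so that $\Gamma(\zeta,a)$ has half-aperture $\theta$ at $\zeta$ and the conditions $a<a_N$ (resp. $a=a_N$) read exactly $\theta<\tfrac{\pi}{2}\tfrac{\alpha+2}{\gamma}$ (resp. $\theta=\tfrac{\pi}{2}\tfrac{\alpha+2}{\gamma}$). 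First I would normalise $\zeta=e_1$ and record the elementary comparison, valid on $\Gamma(\zeta,a)$,
\[
\tfrac1a\,|1-\langle w,\zeta\rangle|\;\le\;1-|w|\;\le\;|1-\langle w,\zeta\rangle|,
\]
which lets me replace $1-|\phi|$ by $|u|$ with $u:=1-\langle\phi,\zeta\rangle$. In particular $C_{\alpha}(h)\cap\Gamma(\zeta,a)\subset S(\zeta,ah)$, so that $\mu_{\phi,\alpha}(C_{\alpha}(h))\le\mu_{\phi,\alpha}(S(\zeta,ah))$.

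The heart of the proof is the distributional estimate
\[
\mu_{\phi,\alpha}\big(S(\zeta,s)\big)=v_{\alpha}\big(\{\,|u|<s\,\}\big)\;\lesssim\;s^{\rho},\qquad \rho:=\frac{\pi(\alpha+2)}{2\theta},
\]
uniform over all holomorphic $\phi:\B_N\to\Gamma(\zeta,a)$, the constant depending only on $a,N,\alpha$ and $\phi(0)$. To obtain it I would straighten the cone: since $u$ takes values in the sector $\{|\arg\xi|<\theta\}$ with $\theta<\pi/2$, the branch $F:=u^{\pi/(2\theta)}$ is a well-defined holomorphic map $\B_N\to\{\operatorname{Re}>0\}$, and $\{|u|<s\}=\{|F|<s^{\pi/(2\theta)}\}$, so the estimate amounts to the half-plane bound $v_{\alpha}(\{|F|<\varepsilon\})\lesssim\varepsilon^{\alpha+2}$. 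The exponent $\alpha+2$ is dictated by the one-variable model, where $\gamma=\alpha+2$: there every proper Stolz domain has $\theta<\pi/2$, hence $\rho>\gamma$ automatically, which is precisely why $a_1=\infty$ and why assertion (1) holds for all $\psi$.

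With the estimate in hand the two regimes are immediate. When $a<a_N$ one has $\rho>\gamma$; feeding $\mu_{\phi,\alpha}(C_{\alpha}(h))\lesssim h^{\rho}$ into the layer-cake identity already used in the proof of Theorem \ref{thm|thm_2nd_applic} and using $\psi(\lambda\,\psi^{-1}(x))\le\lambda x$ for $\lambda\le1$ (convexity), the integral $\int_{\B_N}\psi\big(\lambda\psi^{-1}((1-|\phi|)^{-\gamma})\big)\,dv_{\alpha}$ is dominated by $\int_{1}^{\infty}t^{\gamma-\rho-1}\,dt<\infty$; by Proposition \ref{prop|prop-carac-OB-evaluation} this gives order boundedness into $L_{\alpha}^{\psi}$ for an arbitrary Orlicz function, proving (1) and (2)(a). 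When $a=a_N$ the estimate degenerates to the borderline $\mu_{\phi,\alpha}(C_{\alpha}(h))\lesssim h^{\gamma}$; since $\psi\big(A\psi^{-1}(1/h^{\gamma})\big)\le A/h^{\gamma}$ for $A\le1$, this is exactly condition \eqref{eq|eq_order-bound-Cond} once $A$ is taken small enough, and the $\Delta^{1}$-hypothesis then lets me invoke Theorem \ref{thm|thm_2nd_applic}(1)(b) (equivalently Proposition \ref{prop-weak-Orlicz}) to conclude (2)(b).

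The main obstacle is the half-plane bound $v_{\alpha}(\{|F|<\varepsilon\})\lesssim\varepsilon^{\alpha+2}$ itself, which is genuinely finer than what positive real part alone provides: the crude estimate $|1/F|\lesssim(1-|z|)^{-1}$ yields only $\varepsilon^{\alpha+1}$, and for an \emph{unrestricted} $F:\B_N\to\{\operatorname{Re}>0\}$ whose image hugs a horodisc at $0$ the bound is simply false. What rescues the argument is that the Korányi condition is strictly stronger than $u\in\{|\arg\xi|<\theta\}$ — the sharp inequality $\operatorname{Re}u>\tfrac1a|u|+\tfrac12|u|^{2}$ forbids tangential approach to the vertex along the edges of the sector — so that $F$ cannot be small on too large a set. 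Converting this exclusion into the clean power $\alpha+2$, uniformly over all competing $\phi$ and over all directions in the non-isotropic geometry of $\B_N$, is where the real work lies; I expect to carry it out by combining the power-map reduction above with a Schwarz--Pick/Julia--Wolff--Carathéodory control of the approach to $\zeta$ together with the standard volume estimates for the non-isotropic balls $S(\zeta,s)$.
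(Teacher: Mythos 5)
Your reduction (via Proposition \ref{prop|prop-carac-OB-evaluation}) and your endgame are sound: a bound $\mu_{\phi,\alpha}(C_{\alpha}(h))\lesssim h^{\rho}$ with $\rho>N+\alpha+1$ does give order boundedness for every Orlicz function by the layer-cake computation, and the borderline bound $\lesssim h^{N+\alpha+1}$ does yield \eqref{eq|eq_order-bound-Cond} (hence, under $\Delta^{1}$, Theorem \ref{thm|thm_2nd_applic}\,(1)--(b) applies), since $\psi(A\psi^{-1}(y))\leq Ay$ for $A\leq1$. The gap is that the entire proof rests on the distributional estimate $\mu_{\phi,\alpha}(S(\zeta,s))\lesssim s^{\rho}$, $\rho=\pi(\alpha+2)/(2\theta)$, which you assert but do not prove. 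Worse, the route you propose --- straighten the sector by $F=u^{\pi/(2\theta)}$ and invoke a volume bound $v_{\alpha}(\{|F|<\varepsilon\})\lesssim\varepsilon^{\alpha+2}$ for half-plane-valued $F$ on $\B_{N}$ --- is one you yourself concede is false for general half-plane-valued maps; the Schwarz--Pick/Julia--Wolff--Carath\'eodory control you ``expect'' to supply is precisely the missing argument, so as written the proof does not close.

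For comparison, the paper avoids this estimate entirely. For (1) and (2)--(a) it simply quotes the classical facts that $\phi(\D)\subset$ Stolz domain (resp.\ $\phi(\B_N)\subset\Gamma(\zeta,\gamma)$, $\gamma<a_N$) makes $C_{\phi}$ Hilbert--Schmidt on $A_{\alpha}^{2}$, and then observes (Remark \ref{today}) that $\int(1-|\phi|)^{-(N+\alpha+1)}dv_{\alpha}<\infty$ already implies order boundedness into $L_{\alpha}^{\psi}$ for \emph{every} $\psi$, by convexity --- no new measure estimate is needed. For the critical case (2)--(b) it reduces to dimension one by slice integration ($\phi^{\zeta}(z)=\phi_{1}(z\zeta)$) and there uses the factorization $1/(1-\varphi)=F\circ\tau$ with $F$ the \emph{conformal} map of $\D$ onto the sector and $\tau$ a holomorphic self-map of $\D$; the needed decay of $v_{\alpha}(\varphi^{-1}(S(1,h)))$ then follows from the universal boundedness of composition operators on $A_{\alpha}^{\psi}(\D)$ via Fact \ref{factbounded} applied to $\tau$. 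That one-variable univalence is exactly the substitute for the half-plane volume bound you are missing; without it (or the explicit Hilbert--Schmidt citations), your argument has no way to convert the Kor\'anyi hypothesis into the power $\rho$.
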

\begin{proof}
We will assume that $\alpha>-1$, the case $\alpha=-1$ being similarly proven (to get the case
$\alpha=-1$, it is essentially enough to change suitably $\mathbb{B}_{N}$
into $\mathbb{S}_{N}$ and $v_{\alpha}$ into $\sigma_{N}$, and to
consider $\phi^{*}$ instead of $\phi$, or $\mathcal{S}(\zeta,h)$
instead of $S(\zeta,h)$. The details are left to the reader). Moreover
we can assume for simplicity that all Kor\'anyi approach regions
$\Gamma\left(\zeta,a\right)$ appearing in the sequel are based at
the point $\zeta=e_{1}=\left(1,0,\ldots,0\right)\in\mathbb{S}_{N}$.
Indeed for every unitary maps $U$ of $\mathbb{B}_{N}$ $U\Gamma\left(\zeta,a\right)=\Gamma\left(U\zeta,a\right)$,
and $C_{\phi}:A_{\alpha}^{\psi}(\B_N)\rightarrow A_{\alpha}^{\psi}(\B_N)$ is order bounded into $L_{\alpha}^{\psi}$
if and only if $C_{U\phi}:A_{\alpha}^{\psi}(\B_N)\rightarrow A_{\alpha}^{\psi}(\B_N)$ is order bounded into $L_{\alpha}^{\psi}$
(by (1) in Proposition \ref{prop|prop-carac-OB-evaluation}).

The proof of (1) and (2)--(a) are similar: if $N=1$, we recall that if $\phi$ maps $\D$ into a Stolz domain, then $C_{\phi}$ is Hilbert-Schmidt on $A_{\alpha}^2(\D)$ \cite{shapiro_composition_1993} and then the conclusion follows by Remark \ref{today} (1).
If $N>1$ it is classical that if $\phi\left(\mathbb{B}_{N}\right)\subset\Gamma\left(\zeta,\gamma\right)$
for $\gamma<a_{N}$ then $C_{\phi}$ is Hilbert-Schmidt on $A_{\alpha}^{2}\left(\mathbb{B}_{N}\right)$ (see the remarks on Theorem 2.2 in \cite{maccluer_compact_1985}, Page 246). We conclude again by Remark \ref{today}.

We now deal with (2)--(b). For this purpose, we need first an adaptation of \cite[Lemma 6.3]{cowen_composition_1995} to the Orlicz setting.
\begin{lemma}
Let $\psi$ be an Orlicz function and $\alpha\geq-1$. Let $\varphi:\mathbb{D}\rightarrow\mathbb{D}$
be holomorphic and assume that $\varphi\left(\mathbb{D}\right)\subset\Gamma\left(1,\gamma\right)$
for some $\gamma>1$. There exists a constant $A>0$ which depends
only on $\phi\left(0\right)$ and $\gamma$ such that
\[
\mu_{\varphi,\alpha}\left(S\left(1,h\right)\right)\leq\frac{1}{\psi\left(A\psi^{-1}\left(1/h^{\beta(\alpha+2)}\right)\right)},
\]
where ${\displaystyle \beta=\frac{\pi}{2\cos^{-1}\left(1/\gamma\right)}}$.\end{lemma}
\begin{proof}
Since $\varphi\left(\mathbb{D}\right)\subset\Gamma\left(1,\gamma\right)$,
the beginning of the proof of \cite[Lemma 6.3]{cowen_composition_1995} ensures
that ${\displaystyle \frac{1}{1-\varphi\left(z\right)}=F\circ\tau}$
where ${\displaystyle F\left(z\right)=\left(\frac{1+z}{1-z}\right)^{2b/\pi}}$
with $b=\cos^{-1}\left(1/\gamma\right)$ and $\tau$ is an analytic self-map
of $\mathbb{D}$. It follows that
\begin{eqnarray*}
\left\{ z\in\mathbb{D},\thinspace\left|1-\varphi\left(z\right)\right|<h\right\}  & = & \left\{ z\in\mathbb{D},\thinspace\left|F\circ\tau\left(z\right)\right|>1/h\right\} \\
 & \subseteq & \left\{ z\in\mathbb{D},\thinspace\left|1-\tau\left(z\right)\right|<2h^{\pi/\left(2b\right)}\right\} .
\end{eqnarray*}
Hence, since every composition operator is bounded on every $A_{\alpha}^{\psi}\left(\mathbb{D}\right)$,
$\alpha\geq-1$, the measure $\mu_{\tau,\alpha}=v_{\alpha}\circ\tau^{-1}$ satisfies, by the Fact \ref{factbounded},
\[
v_{\alpha}\left(\varphi^{-1}\left(S\left(1,h\right)\right)\right)\leq\frac{1}{\psi\left(A\psi^{-1}\left(1/h^{\beta(\alpha+2)}\right)\right)}\text{ (recall that }\beta=\frac{\pi}{2b}\text{)},
\]
for any $h\in\left(0,\eta\right)$, some $\eta\in (0,1)$ and some constant $A>0$ depending
only on $\alpha$ and $\varphi(0)$.
\end{proof}
We finish the proof of (2)--(b) of Theorem \ref{thm|thm_Kor_order-bounded}, assuming
that $\phi\left(\mathbb{B}_{N}\right)\subset\Gamma\left(\zeta,a_{N}\right)$.
The absorption property of the non-isotropic balls $S(\zeta,t)$,
$\zeta\in\mathbb{S}_{N}$, ensures that there exists a constant
$1\leq C<\infty$ such that $C_{\alpha}\left(h\right)\cap\Gamma\left(1,a\right)\subset S\left(1,Ch\right)$.
Thus
\begin{equation}
\mu_{\phi,\alpha}\left(C_{\alpha}\left(h\right)\right)\leq\mu_{\phi,\alpha}\left(S\left(e_{1},Ch\right)\right).\label{eq|eq1-geo-cond-BO}
\end{equation}
Let $\chi_{\phi^{-1}\left(S\left(e_{1},h\right)\right)}$ be the characteristic
function of $\phi^{-1}\left(S\left(e_{1},h\right)\right)$. By integrating
in polar coordinates and applying the slice integration formula we get,
for any $h\in(0,1)$,
\begin{eqnarray}
\mu_{\phi,\alpha}\left(S\left(e_{1},h\right)\right) & = & \int_{\mathbb{B}_{N}}\chi_{\phi^{-1}\left(S\left(e_{1},h\right)\right)}dv_{\alpha}\nonumber \\
 & = & 2N\int_{0}^{1}\int_{\mathbb{S}_{N}}\chi_{\phi^{-1}\left(S\left(e_{1},h\right)\right)}(r\zeta)d\sigma_{N}(\zeta)\left(1-r^{2}\right)^{\alpha}r^{2N-1}dr\nonumber \\
 & = & 2N\int_{0}^{1}\int_{\mathbb{S}_{N}}\int_{0}^{2\pi}\chi_{\phi^{-1}\left(S\left(e_{1},h\right)\right)}\left(e^{i\vartheta}r\zeta\right)\frac{d\vartheta}{2\pi}d\sigma_{N}\left(\zeta\right)\left(1-r^{2}\right)^{\alpha}r^{2N-1}dr\nonumber \\
 & \leq & 2N\int_{0}^{1}\int_{\mathbb{S}_{N}}\int_{0}^{2\pi}\chi_{\phi^{-1}\left(S\left(e_{1},h\right)\right)}\left(e^{i\vartheta}r\zeta\right)\frac{d\vartheta}{2\pi}d\sigma_{N}\left(\zeta\right)\left(1-r^{2}\right)^{\alpha}rdr.\label{eq|eq2-geo-cond-BO}
\end{eqnarray}
Let us now denote by $\phi^{\zeta}:\mathbb{D}\rightarrow\mathbb{D}$
the holomorphic self-map of $\mathbb{D}$ defined by $\phi^{\zeta}\left(z\right)=\phi_{1}\left(z\zeta\right)$
for $z\in\mathbb{D}$. Since $\phi\left(\mathbb{B}_{N}\right)\subset\Gamma\left(e_{1},a_{N}\right)$
then $\phi^{\zeta}\left(\mathbb{D}\right)\subset\Gamma\left(1,a_{N}\right)$
and, for any $\zeta\in\mathbb{S}_{N}$, $\phi(z\zeta)\in S\left(e_{1},h\right)$
if and only if $\phi^{\zeta}\left(z\right)\in S(1,h)$. Moreover $\phi\left(0\right)=\phi^{\zeta}\left(0\right)$
for every $\zeta\in\mathbb{S}_{N}$. Applying the previous lemma to
$\phi^{\zeta}$ and $\gamma=a_{N}$, it follows, for any $\zeta \in \mathbb{S}_N$,
\begin{eqnarray*}
2\int_{0}^{1}\int_{0}^{2\pi}\chi_{\phi^{-1}\left(S\left(e_{1},h\right)\right)}\left(e^{i\vartheta}r\zeta\right)\frac{d\vartheta}{2\pi}\left(1-r^{2}\right)^{\alpha}rdr & = & v_{\alpha}\left(\left(\phi^{\zeta}\right)^{-1}\left(S\left(1,h\right)\right)\right)\\
 & \leq & \frac{1}{\psi\left(A\psi^{-1}\left(1/h^{N+\alpha+1}\right)\right)}.
\end{eqnarray*}
From (\ref{eq|eq1-geo-cond-BO}) and (\ref{eq|eq2-geo-cond-BO}) we
conclude the proof of (2)--(b) using the concavity of $\psi^{-1}$
and Theorem \ref{thm|thm_2nd_applic} (1)--(b), $\psi$ satisfying
the $\Delta^{1}$--condition.\end{proof}
\begin{remark}Point (2)--(b) in the previous theorem is false if $\psi$ satisfies the $\Delta_{2}$--condition. Indeed in this case, the order boundedness of a composition operator implies its compactness and it is known \cite{cowen_composition_1995} (see also \cite{charpentier_compact_2013}) that there exists holomorphic self-map $\phi$ of $\B_N$ such that $\phi\left(\mathbb{B}_{N}\right)\subset\Gamma\left(\zeta,a_{N}\right)$, though $C_{\phi}$ is not compact on $A_{\alpha}^{\psi}\left(\mathbb{B}_{N}\right)$, $\alpha\geq-1$.
\end{remark}

In view of the previous, it is more accurate to consider known geometric conditions to the compactness of a composition operator on Hardy(-Orlicz) or Bergman(-Orlicz) spaces as conditions to its order boundedness. This leads to the following problems:
\begin{enumerate}\item To find reasonable geometric conditions - somehow similar to that involving Kor\'anyi regions but depending on the Orlicz function - sufficient for the compactness of $C_{\phi}$ at every scale of Hardy-Orlicz (or Bergman-Orlicz) spaces.
\item In the classical cases $H^p$ and $A_{\alpha}^p$, to exhibit a geometric condition ensuring the compactness of $C_{\phi}$ but not necessarily its order boundedness. As mentioned in \cite[Page 147--148]{cowen_composition_1995}, it already requires some effort to find an example of a composition operator which is compact but not Hilbert-Schmidt on $H^2(\D)$ (\emph{i.e.} not order bounded), see also \cite[Section 4]{shapiro_compact_1973}.
\item Also it would be interesting to find \emph{where} in the scale of Hardy-Orlicz or Bergman-Orlicz spaces - in terms of the type of growth of Orlicz functions - the order boundedness of $C_{\phi}$ becomes no longer stronger than its compactness.
\end{enumerate}

\section*{Acknowledgements}The author was partly supported by the grant ANR-17-CE40-0021 of the French National Research Agency ANR (project Front).

\end{document}